\documentclass[11pt]{article}
\usepackage{tikz}
\usepackage[a4paper,margin=1in]{geometry}
\usepackage{amsmath,amssymb,amsthm,mathtools}
\usepackage{graphicx}
\usepackage{aliascnt}
\usepackage{enumitem}
\setlist[enumerate]{
  label=\textup{(\roman*)},
  labelindent=0pt,
  leftmargin=*,
  align=left
}
\usepackage{microtype}
\usepackage{xcolor}
\usepackage[colorlinks=true,linkcolor=blue!65!black,citecolor=blue!65!black,urlcolor=blue!65!black]{hyperref}
\usepackage[nameinlink,noabbrev]{cleveref}

\numberwithin{equation}{section}

\newtheorem{theorem}{Theorem}[section]

\newaliascnt{proposition}{theorem}
\newtheorem{proposition}[proposition]{Proposition}
\aliascntresetthe{proposition}

\newaliascnt{lemma}{theorem}
\newtheorem{lemma}[lemma]{Lemma}
\aliascntresetthe{lemma}

\newaliascnt{corollary}{theorem}
\newtheorem{corollary}[corollary]{Corollary}
\aliascntresetthe{corollary}

\newaliascnt{claim}{theorem}

\aliascntresetthe{claim}

\theoremstyle{definition}

\newaliascnt{definition}{theorem}
\newtheorem{definition}[definition]{Definition}
\aliascntresetthe{definition}

\theoremstyle{remark}

\newaliascnt{remark}{theorem}
\newtheorem{remark}[remark]{Remark}
\aliascntresetthe{remark}

\newaliascnt{example}{theorem}
\newtheorem{example}[example]{Example}
\aliascntresetthe{example}

\usepackage{hyperref}
\usepackage[nameinlink]{cleveref}

\crefname{theorem}{Theorem}{Theorems}
\Crefname{theorem}{Theorem}{Theorems}

\crefname{proposition}{Proposition}{Propositions}
\Crefname{proposition}{Proposition}{Propositions}

\crefname{lemma}{Lemma}{Lemmas}
\Crefname{lemma}{Lemma}{Lemmas}

\crefname{corollary}{Corollary}{Corollaries}
\Crefname{corollary}{Corollary}{Corollaries}

\crefname{claim}{Claim}{Claims}
\Crefname{claim}{Claim}{Claims}

\crefname{definition}{Definition}{Definitions}
\Crefname{definition}{Definition}{Definitions}

\crefname{remark}{Remark}{Remarks}
\Crefname{remark}{Remark}{Remarks}

\crefname{example}{Example}{Examples}
\Crefname{example}{Example}{Examples}

\Crefformat{equation}{#2(#1)#3}
\newcommand{\R}{\mathbb{R}}
\newcommand{\N}{\mathbb{N}}
\newcommand{\one}{\mathbf{1}}
\newcommand{\Span}{\operatorname{Span}}
\newcommand{\Par}{\operatorname{Par}}
\newcommand{\Ch}{\operatorname{Ch}}

\newcommand{\cA}{\mathcal{A}}

\newcommand{\Etwo}{E_2}
\newcommand{\Gvis}{G_{\mathrm{vis}}}
\newcommand{\CD}{\mathrm{CD}}
\newcommand{\diam}{\operatorname{diam}}

\newcommand{\card}[1]{\lvert#1\rvert}
\newcommand{\vis}{\mathrm{vis}}
\newcommand{\col}{\mathrm{col}}
\newcommand{\TensorA}[4]{ \mathcal{A}(#1,#2,#3,#4)}
\title{Structure theorems for Lichnerowicz-sharp graphs}
\author{%
Yanlong Ding\thanks{School of Mathematical Sciences, University of Science and Technology of China, Hefei 230026, China. Email address: \texttt{dylustc@mail.ustc.edu.cn}.}%
\and
Shiping Liu\thanks{School of Mathematical Sciences, University of Science and Technology of China, Hefei 230026, China. Email address: \texttt{spliu@ustc.edu.cn}.}%
\and
Chiyu Zhou\thanks{School of Mathematical Sciences, University of Science and Technology of China, Hefei 230026, China. Email address: \texttt{dovong@mail.ustc.edu.cn}.}%
}

\date{}
\begin{document}
\maketitle

\begin{abstract}
Hypercube graphs are fundamental model spaces of positive curvature in discrete comparison geometry.  Let $G$ be a finite, connected, simple, unweighted graph with Bakry--\'Emery curvature bounded below by \(K\).  We call $G$ Lichnerowicz-sharp if its first non-zero non-normalized Laplacian eigenvalue $\lambda_1=K$. We prove that,  after removing a canonical collection of edges on which every \(K\)-eigenfunction is constant, the resulting graph has a canonical bundle structure. Its fibers are regular, have similar structure with hypercubes, and are Laplacian-cospectral with hypercubes, although they need not themselves be hypercubes. If the base graph is nontrivial, then it satisfies \(\CD(K,\infty)\) and has first eigenvalue strictly greater than \(K\).

As a consequence, if the vertex degree in \(G\) is constant along each canonical fiber, then every fiber is a hypercube and \(G\) is a hypercube bundle. Conversely, for every $d\geq 4$, we construct Lichnerowicz-sharp graphs with non-hypercube canonical fibers of degree $d$.

\end{abstract}


\section{Introduction}

Hypercube graphs are fundamental discrete structures with important roles in
geometry \cite{Gromov1999}, probability \cite{Bobkov1997}, and graph theory
\cite{Harary1988}.  From the viewpoint of discrete comparison geometry, they
are basic positively curved model spaces.  In this paper we study the equality
case in the  Lichnerowicz estimate via Bakry--\'Emery curvature and ask what structures are forced by sharpness.

\subsection{Background and motivation}

The continuous theory provides the guiding analogy.  Let \((M^n,g)\) be a
closed Riemannian manifold with
\(\operatorname{Ric}_g\geq (n-1)\kappa g\), where \(\kappa>0\).  The classical
Lichnerowicz theorem gives \(\lambda_1(M)\geq n\kappa\), while Obata's theorem
shows that equality characterizes the round sphere of curvature \(\kappa\)
\cite{Lichnerowicz1958,Obata1962}.  This result is a prototype of spectral
rigidity: a lower curvature bound first yields an analytic inequality, and an
eigenfunction attaining equality satisfies additional identities that recover
global geometry.  Eigenvalue comparison, rigidity, and quantitative pinching
results developed this principle further
\cite{Cheng1975,Petersen1999,Aubry2005}.

In contrast to the Riemannian situation above, sharp equality for the first
eigenvalue alone does not imply rigidity in K\"ahler geometry.  Let
\((M,\omega)\) be a compact K\"ahler manifold of complex dimension \(n\geq2\)
with \(\operatorname{Ric}(\omega)\geq\omega\), and write
\(0<\lambda_1\leq\lambda_2\leq\cdots\) for the eigenvalues of the complex
Laplacian.  Although \(\lambda_1\geq1\), the K\"ahler--Einstein product
$
 (\mathbb{CP}^{n-1},\omega_{\mathbb{CP}^{n-1}})
 \times(\mathbb{CP}^{1},\omega_{\mathbb{CP}^{1}})
$
satisfies \(\operatorname{Ric}(\omega)=\omega\) and
\(\lambda_1=\cdots=\lambda_{n^2+2}=1\), but is not biholomorphically
isometric to \(\mathbb{CP}^{n}\).  Chu, Wang, and Zhang proved that the missing
rigidity is recovered by sufficiently large multiplicity: if
\(\lambda_{n^2+3}=1\), then \((M,\omega)\) is biholomorphically isometric to
\((\mathbb{CP}^{n},\omega_{\mathbb{CP}^{n}})\).  The preceding product shows
that $n^2+3$ is optimal \cite[Theorem~1.6 and Section~6]{CWZ2025}.

The Bakry--\'Emery \(\Gamma\)-calculus and the associated
curvature-dimension conditions \cite{BE1985} provide a synthetic framework
for Ricci curvature; see \cite{BGL2014} for a systematic account.
This framework admits a natural formulation for graph Laplacians and has
become an effective tool for studying geometric, analytic, and combinatorial
properties of graphs; see, for instance,
\cite{BHLLMY2015,FathiShu2018,HornJGT,Hua2019,HM2024,MM2024,
KMY2021,LMP2018,MuenchRose2020,SalezGAFA,SalezJEMS}
and the references therein.

Let $G=(V,E)$ be a finite connected simple unweighted graph.  We use the
non-normalized Laplacian
\begin{equation*}
 Lh(x)=\sum_{y\sim x}\bigl(h(y)-h(x)\bigr).
\end{equation*}
The eigenvalues of $-L$ are ordered as
\[
  0=\lambda_0<\lambda_1\leq \lambda_2\leq\cdots,
\]
where the eigenvalues are counted with multiplicity. As in the continuous
setting, if a finite graph satisfies the Bakry--\'Emery
curvature-dimension condition \(\CD(K,\infty)\) for some \(K>0\), then
the first nonzero eigenvalue of \(-L\) satisfies the discrete
Lichnerowicz estimate
\begin{equation}
\label{eq:Lich}
 \lambda_1\geq K;
\end{equation}
see \cite{curvatureaspectsofgraphs,curvatureandhigherorder}.  For the definition of the Bakry--\'Emery curvature dimension condition $\CD(K,\infty)$, we refer to \Cref{def:CD} below.

The hypercube is the basic model for this equality theory.  The
\(D\)-dimensional hypercube \(H_D\) is the Cartesian product of $D$ copies of $K_2$. It satisfies \(\CD(2,\infty)\), see \cite[Example 7.15]{CLP2020},   and its Laplacian spectrum begins with
\[
 2=\lambda_1(H_D)=\cdots=\lambda_D(H_D)<\lambda_{D+1}(H_D).
\]

Liu, M\"unch, and Peyerimhoff proved that the hypercube is rigid when the
curvature lower bound is attained with sufficiently large multiplicity:
\begin{theorem}[{\cite[Theorem~1.4]{LMP}}]\label{thm:LMP-rigidity}
Suppose that $G$ satisfies $\CD(K,\infty)$ with $K>0$ and $\lambda_{\Delta}=K$, where $\Delta=\max_x\deg(x)$. Then $G\cong H_\Delta$ and, in particular, $K=2$.
\end{theorem}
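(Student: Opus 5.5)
The plan is to run the standard Bochner-type equality argument: equality in the Lichnerowicz estimate forces the Hessian of each $K$-eigenfunction to be rigid, and that rigidity is then assembled across $\Delta$ linearly independent eigenfunctions to rebuild the hypercube.

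First, fix an eigenfunction $f$ with $-Lf=Kf$. Integrating the curvature-dimension inequality $\Gamma_2(f)\ge K\Gamma(f)$ over $V$ gives
\[
\sum_x\Gamma_2(f)(x)=\sum_x (Lf)^2/1\cdot\tfrac12\cdot 2=K\sum_x\Gamma(f)(x),
\]
so equality $\Gamma_2(f)=K\Gamma(f)$ holds pointwise. I will expand $\Gamma_2(f)(x)$ in local coordinates: it is a quadratic form in the differences $f(z)-f(x)$ over the two-ball of $x$. Pointwise equality means the gradient vector $(f(y)-f(x))_{y\sim x}$ minimizes the associated quadratic form, and that minimization yields linear identities. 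These are the discrete analogue of $\nabla^2 f=-(K/n)fg$: for two-step paths $x\sim y\sim z$ with $z\notin B_1(x)$, the second differences are constrained, and $f(z)$ is determined by its values on $B_1(x)$.

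Next, let $U$ be the eigenspace, which has dimension at least $\Delta$. Pick a vertex $x$ of maximal degree $\Delta$. The map $U\to\R^{\Delta}$, $f\mapsto\nabla f(x)$, should be injective. I expect to prove this by propagating the local identities: if $\nabla f(x)=0$ and $f(x)=0$, then the rigidity forces $f$ to vanish on successive spheres. Injectivity makes the map a bijection, so every gradient pattern at $x$ is realized by some eigenfunction. Applying the equality conditions to all of them simultaneously, and exploiting the freedom to choose gradients, should give the following combinatorial facts.
\begin{enumerate}
\item There are no triangles.
\item Any two neighbors $y,y'$ of $x$ have exactly one further common neighbor, so the graph contains $4$-cycles.
\item The curvature bound forces $K=2$.
\item Every vertex has degree $\Delta$.
\end{enumerate}
Transporting the argument to neighbors shows that these properties hold at every vertex, so $G$ is a triangle-free $\Delta$-regular graph in which every pair of edges at a vertex spans a unique $4$-cycle. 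A standard characterization of hypercubes (for instance, via the realization of the gradient coordinates as $\pm1$ coordinate functions, which give an isometric embedding into $H_\Delta$ that is a covering and hence an isomorphism) then finishes the proof.

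The main obstacle is the middle step: extracting exact combinatorial information from the pointwise equality in $\Gamma_2$. The $\Gamma_2$ quadratic form at $x$ involves the local structure (triangles, the number of paths to each $z$ in the two-sphere), and the equality constraints vary with that structure. My first attempt would be to write $\Gamma_2(f)(x)-K\Gamma(f)(x)$ as a sum of squares over two-sphere vertices plus a one-ball quadratic form $Q_x$. Equality for a full-dimensional space of gradients then forces $Q_x\equiv 0$ on $\R^\Delta$. Reading off the coefficients of $Q_x$ should give degree constancy, triangle-freeness, exact $4$-cycle counts, and $K=2$.
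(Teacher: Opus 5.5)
Your reduction up to $A_\infty(x)=KI$ is sound. Pointwise equality follows from $\sum_x\Gamma_2(f)(x)=\|Lf\|^2=K\sum_x\Gamma(f)(x)$; your displayed version of this identity is garbled. Positive semidefiniteness of $\Gamma_2-K\Gamma$ at $x$ then puts $f$ in the kernel of that form, and this yields the midpoint identity of \Cref{lem:midpoint}. Your propagation argument is correct. First, $\nabla f(x)=0$ already forces $f(x)=0$, because $Lf(x)=-Kf(x)$. Second, for $w\in S_{k+1}(x)$ you apply the midpoint identity to $w$ and some $v\in S_{k-1}(x)$, whose common neighbours lie in $S_k(x)$. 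Since injectivity holds at every vertex, you get $\deg\equiv\Delta=\dim U$ and the gradient map is onto everywhere. Hence $A_\infty(x)=KI$, and the matrix entries give $\varepsilon_{ij}=0$, $\omega_{ij}=\tfrac12$ and $K=2$.

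Item (ii), however, does not follow by reading off coefficients. The quantity $\omega_{ij}=\tfrac12$ is a weighted count, and it is equally consistent with two common neighbours having $d_x^-=4$. This is repairable. Equality gives $\tfrac12L\Gamma(f)=\Gamma_2(f)+\Gamma(f,Lf)=K\Gamma(f)-K\Gamma(f)=0$, so $\Gamma(f)$ is constant. Now take $f$ with $\nabla f(x)=e_1$. The midpoint identity gives $f(z)-f(y_1)=2/d_x^-(z)-1$ for every $z\in S_1(y_1)\setminus\{x\}$. Then $\Gamma(f)(y_1)=\Gamma(f)(x)$ forces $d_x^-(z)=2$.

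The genuine gap is the final step. The local properties you reach do not characterize $H_\Delta$. Take the Clebsch graph, i.e.\ the folded $5$-cube, which is strongly regular with parameters $(16,5,0,2)$. It is triangle-free and $5$-regular, and any two neighbours of a vertex have exactly one further common neighbour. Its curvature matrix is $2I$ at every vertex, so it even satisfies $\CD(2,\infty)$. Yet it has $16$ vertices and $\lambda_1=4$. So the eigenfunctions must be used globally, and your parenthetical is exactly the unproved crux. Nothing you have shown yields an isometric embedding, and a covering of $H_\Delta$ alone need not be an isomorphism: $C_8\to C_4=H_2$ is a counterexample.

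Here is one way to close the gap.
\begin{enumerate}
\item By polarization, $\Gamma(f,g)$ is constant for $f,g\in U$. So for a spectral embedding $\Phi:V\to\R^\Delta$ built from an orthonormal basis, $\sum_{u\sim s}(\Phi(u)-\Phi(s))^{\otimes2}$ is a fixed multiple of $I$ (cf.\ \Cref{lem:frame-constant}). With exactly $\Delta$ summands, this says the edge vectors at $s$ form an orthogonal frame of common length.
\item The midpoint identity with $d_x^-(z)=2$ makes every $4$-cycle a parallelogram. Hence, up to sign, the frame $\{\pm v_1,\dots,\pm v_\Delta\}$ is the same at every vertex.
\item Put $h_k=\langle\Phi,v_k\rangle/|v_k|^2\in U$. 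It changes, by $\pm1$, only along the unique direction-$k$ edge at each vertex. So $-2h_k(s)=Lh_k(s)=\pm1$, i.e.\ $h_k\in\{\pm\tfrac12\}$.
\item Flipping the $\Delta$ coordinates one at a time produces two vertices at distance at least $\Delta=2\Delta/K$. The equality case of \Cref{thm:MyersBound} then gives $G\cong H_\Delta$.
\end{enumerate}
Alternatively, after step 3, $s\mapsto(h_k(s))_k$ is a covering of $H_\Delta$ along which every square lifts to a closed $4$-cycle. The $2$-skeleton of the cube is simply connected, so this covering is an isomorphism.

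The paper does not prove this statement; it quotes it from \cite{LMP}. Its closest argument, \Cref{thm:hypercube-rigidity}, ends in the same way: it exhibits a vertex at distance $D$ and invokes Myers rigidity, not a local characterization. In fact, once you know the gradient map is onto at every vertex, every edge is visible, and that theorem applies directly.
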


This rigidity theorem naturally raises the question of what structure
remains when the curvature bound is attained with smaller multiplicity.
In the regular setting, Ding, Liu, and Zhou obtained the following result.
\begin{theorem}[{\cite[Theorem~1.3]{ding2026}}]
  \label{thm:intro-BundleStructure}
  Let $G$ be a connected \(d\)-regular graph satisfying $\CD(K,\infty)$ with $K>0$. Assume $\lambda_1(G)=K$.   Let $m_K(G)$ denote the multiplicity of the $K$-eigenvalue. Then there exists a graph \(G^\prime\) and an integer \(r\geq m_K(G)\) such that \(G\) is an \(H_r\)-bundle over \(G^\prime\). The base graph \(G^\prime\) satisfies \(\CD(K,\infty)\), and if it is not a singleton graph, then \(\lambda_1(G^\prime)>K\). 
\end{theorem}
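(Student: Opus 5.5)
The plan is to extract the bundle structure directly from the equality case of the curvature--dimension inequality for $K$-eigenfunctions. Let $f$ be an eigenfunction with $-Lf=Kf$. Integrate the $\CD(K,\infty)$ inequality $\Gamma_2(f)\geq K\Gamma(f)$ over $V$. Since $\sum_x\Gamma_2(f)(x)=\sum_x (Lf)^2/1=K\sum_x\Gamma(f)$, equality $\Gamma_2(f)(x)=K\Gamma(f)(x)$ is forced pointwise. The same holds for every function in the $K$-eigenspace $W$. We then polarize to get a bilinear identity $\Gamma_2(f,g)=K\Gamma(f,g)$ for all $f,g\in W$.

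Next we write $\Gamma_2$ in local form. Following the approach of \cite{LMP}, we express $2\Gamma_2(f)(x)$ as a sum over pairs of neighbours and over $2$-paths of squared "Hessian-like" terms, together with a term involving $\Gamma(f)$ and degree differences. Pointwise equality combined with the curvature bound then forces each local optimization to be attained. Concretely, for every vertex $x$ and every 2-path $x\sim y\sim z$ with $d(x,z)=2$, the increments of $f$ must satisfy parallelogram rules: $f(z)-f(y)=f(x')-f(x)$ along $4$-cycles. In addition, $f(y)-f(x)=\pm c$ takes values compatible with a hypercube coordinate. This is the step where I expect most work. The aim is to show that the edges $E_0=\{xy: f(x)=f(y)\ \forall f\in W\}$ are exactly the "horizontal" edges. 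Removing $E_0$ should leave a graph in which every vertex has the same number $r$ of non-constant edges, and these edges pair up in $4$-cycles like the coordinate directions of $H_r$.

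The bundle structure is then built as follows. The fibers are the connected components of $(V,E\setminus E_0)$. The $4$-cycle (parallelogram) property propagated from the eigenfunction identities shows three things. First, each fiber is locally a cube. Second, the map $x\mapsto (f_1(x),\dots,f_m(x))$ for a basis of $W$ embeds each fiber isometrically into a cube of dimension $r\geq m_K(G)$. Third, regularity of each fiber combined with this local structure makes it a full $H_r$. Here regularity of $G$ is used: the degree-difference terms in $\Gamma_2$ vanish only if horizontal degrees match, and this yields the equal-size matching between adjacent fibers that a bundle requires. Specifically, the $E_0$-edges between two fibers form a graph isomorphism commuting with the cube coordinates, because the eigenfunctions are constant along them. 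We then define $G'$ as the quotient graph on fibers.

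Finally, for the base we compute $\Gamma_2$ of pulled-back functions. A function $u$ on $G'$ lifts to $\tilde u$ on $G$ that is constant on fibers. The parallel structure gives $\Gamma_2^{G}(\tilde u)=\Gamma_2^{G'}(u)$ and $\Gamma^G(\tilde u)=\Gamma^{G'}(u)$, up to a uniform edge-multiplicity normalization, so $G'$ inherits $\CD(K,\infty)$. For the claim $\lambda_1(G')>K$, suppose $\lambda_1(G')=K$ with eigenfunction $u$. Then $\tilde u$ is a $K$-eigenfunction of $G$. It is constant on fibers, yet it is non-constant on some $E_0$-edge, which contradicts the definition of $E_0$ as the set of edges on which every $K$-eigenfunction is constant.
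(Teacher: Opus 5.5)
Your first and last paragraphs are sound and agree with the paper. Pointwise equality is \Cref{lem:defect-nullity}, although what is really needed is its kernel form $\Gamma_2(a,\psi)=K\Gamma(a,\psi)$ for \emph{arbitrary} $\psi$, since that is what produces \Cref{lem:midpoint} and \Cref{lem:adjacent-edge}. The pull-back argument for $\CD(K,\infty)$ of $G'$ and for $\lambda_1(G')>K$ is the one in the proof of \Cref{thm:BundleStructure}, and it is valid once every vertex is known to have exactly one neighbour in each adjacent fiber.

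Everything in between, which is the actual content of the theorem, is either missing or false as stated. The local step is only announced. For $d(x,z)=2$ with $N$ common neighbours, sharpness gives only the averaged identity $f(x)+f(z)=\frac2N\sum_{w\sim x,z}f(w)$. Your per-square rule $f(z)-f(y)=f(x')-f(x)$ is this identity only when $N=2$, and proving that is the whole difficulty. Also, ``$f(y)-f(x)=\pm c$'' fails for a general $f\in W$. What holds is $\Phi(y)-\Phi(x)=\pm v_i$ along visible edges, for mutually orthogonal axes $v_i$, so a generic $f$ has increments $\pm\langle u_f,v_i\rangle$ of several sizes. In the paper this step needs the following:
\begin{itemize}
\item $K=2=\mathcal{K}_\infty$ at every vertex;
\item the unique maximum of a generic eigenfunction on each visible component (\Cref{thm:unique-maximum});
\item the $1$-ball analysis at that maximum;
\item the quartic-tensor identity $\cA(v_x)=-2v_x^{\otimes3}$, which yields the orthogonal axes (\Cref{thm:global-axes});
\item the layer induction of \Cref{thm:intrinsic-Gamma}.
\end{itemize}
Your plan has no substitute for any of these.

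The global step would fail. $\Phi$ takes values in $\R^{m_K}$ and is not injective on a fiber when $r>m_K(G)$. The theorem allows this case, and it occurs. For $r\ge2$, take the quotient of $H_{5+r}=H_5\square H_r$ by the free involution $(u,w)\mapsto(\bar u,\tau w)$, where $\bar u$ is the antipode of $u$ and $\tau$ swaps two coordinates. The involution displaces every vertex by at least $5$, so the quotient is locally isomorphic to $H_{5+r}$ and has $A_\infty\equiv2I$. It is $(5+r)$-regular with $\lambda_1=2$. Its $E_2$ is spanned by the invariant functions $s_1+s_2,s_3,\dots,s_r$, where $s_i$ are the $\pm1$ coordinates of $H_r$, so $m_2=r-1$. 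The fibers are copies of $H_r$, and $\Phi$ identifies the vertices $e_1$ and $e_2$ of each fiber. The transition maps do not commute with the cube coordinates either.

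Nor does ``regular and locally a cube'' give $H_r$. The base of this example, the folded $5$-cube (Clebsch graph), is $5$-regular with the $2$-balls of $H_5$ and $A_\infty\equiv2I$, yet it has only $16$ vertices.

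The paper recovers the statement from \Cref{thm:hypercube-bundle} and \Cref{thm:BundleStructure}, and supplies the global input as follows.
\begin{itemize}
\item The perfect matching between adjacent visible components comes from the intertwining $A_CM=MA_D$ (a consequence of the midpoint identity), Perron--Frobenius, $|C|=|D|=2^d$, and uniqueness of maxima (\Cref{thm:component-matching}). Constancy of eigenfunctions along collapsed edges gives no bijection by itself.
\item Under regularity, no collapsed edge lies inside a fiber. The number $e_C(x)$ of such edges at $x$ is constant on $C$ and vanishes at the unique maximiser.
\item Each fiber $C$ is then itself $\CD(2,\infty)$, via the splitting $A^G_\infty(x)=A^C_\infty(x)\oplus B_x$, and has $\lambda_1(C)=2$ through $f|_C$, which is nonconstant on every edge of $C$.
\item The layered structure gives $S^C_D(p)\ne\varnothing$, so equality in the Myers bound forces $C\cong H_D$ (\Cref{thm:hypercube-rigidity}).
\end{itemize}
Finally, $r\ge m_K$ does not come from an embedding. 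It comes from \Cref{lem:frame-constant}: $Q(s)=\frac4{|V|}I_m$ is a sum of one rank-one term per visible edge at $s$.
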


We next recall the notion of a graph bundle appearing in this theorem.
Graph bundles generalize both graph coverings and Cartesian products and
have been studied extensively from structural, topological, and algorithmic
perspectives; see, for example,  \cite{PTSJVJ1983,MPTSM1988,KJL1990,SohnLee1994,ChaeKwakLee1993,ImrichPisanskiZerovnik1997,HongKwakLee1999,KlavzarMohar1995,KwakLeeSohn1996,ZmazekZerovnik2000,ZmazekZerovnik2002,ZmazekZerovnik2002a,ZmazekZerovnik2006,Zerovnik2000,BanicErvesZerovnik2009,BanicZerovnik2010,ErvesZerovnik2013,FengKwak2006,KwakKwon2001,PisanskiZerovnik2009}; see also \cite{LL2024} for recent developments.
\begin{definition}[{\cite[Definiation 1]{LL2024}}]
    Let $G$ and $F$ be two unweighted graphs. Let $E^{\mathrm{ori}}_{G}=\{(x,y): x\sim y \text{ in } G\}$ be the set of oriented edges of $G$. Let $\sigma: E^{\mathrm{ori}}_G\to \operatorname{Aut}(F), (x,y)\to \sigma_{xy}$ such that $\sigma_{yx}=\sigma_{xy}^{-1}$. Construct the graph $G\square_\sigma F$ by setting $V(G\square_\sigma F)=V(G)\times V(F)$ and $E(G\square_\sigma F)$
    \begin{enumerate}[label=\textup{(\roman*)}]
        \item $(x,u)\sim (y,\sigma_{xy}(u))$ if $x\sim y$ in $G$;
        \item $(x,u)\sim (x,v)$ if $u\sim v$ in $F$.
    \end{enumerate}
    We call $G\square_\sigma F$ the $F$-bundle over $G$ and refer to $G$ and $F$ as the base graph and the fiber graph, respectively.
\end{definition}
In this paper, we extend Ding, Liu, and Zhou's \Cref{thm:intro-BundleStructure} beyond the
regular setting by establishing a bundle structure theorem for arbitrary
Lichnerowicz-sharp graphs.

\subsection{Main results}
Unless otherwise stated, all graphs considered in this paper are finite,
connected, simple, and unweighted.  We call \(G\)
\emph{Lichnerowicz-sharp} if it satisfies \(\CD(K,\infty)\) and
\(\lambda_1(G)=K\) for some \(K>0\).  Unlike in the manifold setting, where
the value of \(K\) depends on the normalization of the metric, the equality case in the unweighted graph setting
forces \(K\) to take a universal value.   More
precisely, Ding, Liu, and Zhou proved the following.
\begin{theorem}[{\cite[Theorem~4.1]{ding2026}}]
\label{thm:first-sharp-K2}
Let \(G\) be a graph satisfying \(\CD(K,\infty)\) with \(K>0\).  If $\lambda_1(G)=K$, then \(K=\mathcal{K}_\infty(x)=2\) for every vertex \(x\in V(G)\).
\end{theorem}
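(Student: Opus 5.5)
Integrate the Bochner-type inequality against an eigenfunction, force equality everywhere, and then extract \(K=2\) from a local computation at each vertex.

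\textbf{Step 1 (global equality).} Let \(f\) be a nonzero eigenfunction with \(-Lf=Kf\). Summing \(\Gamma_2(f)\ge K\Gamma(f)\) over \(V\) and using \(\sum_x\Gamma_2(f)(x)=\sum_x (Lf)^2(x)\) and \(\sum_x\Gamma(f)(x)=-\sum_x fLf(x)\), one gets
\[
\sum_x (Lf)^2=K^2\sum_x f^2=K\sum_x\Gamma(f).
\]
Hence \(\sum_x\bigl(\Gamma_2(f)-K\Gamma(f)\bigr)=0\). Since each summand is nonnegative, \(\Gamma_2(f)(x)=K\Gamma(f)(x)\) at every vertex. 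Thus \(f\) is an extremizer of the pointwise curvature problem wherever \(\Gamma(f)(x)\neq0\), which gives \(\mathcal{K}_\infty(x)=K\) at such vertices, and in any case \(\mathcal{K}_\infty(x)\ge K\).

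\textbf{Step 2 (local identity).} At a vertex \(x\) where \(\nabla f(x)\neq0\), write \(\Gamma_2\) explicitly. It splits into a part involving first differences along 2-paths and a part involving \(\deg(x)\), triangles and two-step neighbours. The first-order optimality condition for the quadratic form \(\Gamma_2-K\Gamma\), minimized over \(f\) on the 2-ball with the values on \(B_1(x)\) fixed, pins down the values of \(f\) at distance two. Substituting these values should express \(\Gamma_2(f)(x)-K\Gamma(f)(x)\) as \((2-K)\Gamma(f)(x)\) plus a nonnegative remainder. The local model I expect is \(\Gamma_2\ge \tfrac12(\Delta f)^2+\ldots\) with the hypercube-type constant \(2\), and the claim \(K\le 2\) at every vertex should come from a test function of the form \(f=\one_y-\one_z\) near \(x\). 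The sharper target, however, is \(\mathcal{K}_\infty(x)\le 2\) for every vertex, which I would try to prove via the known upper bound \(\mathcal{K}_\infty(x)\le 2+\tfrac{\#\text{triangles}}{\deg}\)-type estimates. I would then show that the extremal \(f\) forces the triangle and pentagon contributions to vanish.

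\textbf{Step 3 (propagation).} To get \(\mathcal{K}_\infty(x)=2\) at all vertices, including those where \(\nabla f(x)=0\) for every \(K\)-eigenfunction, I would argue as follows. The equality in Step 1 is linear in the eigenspace, so it holds for every \(K\)-eigenfunction. The vertices with nonvanishing gradient for some eigenfunction form a set whose complement must be handled separately. If all \(K\)-eigenfunctions are locally constant near \(x\), then the eigenvalue equation at neighbouring vertices propagates \(f\equiv0\) and contradicts connectedness. More precisely, \(Lf=-Kf\) with \(\nabla f(x)=0\) forces \(f(x)=0\) (since \(Lf(x)=0\) and \(K\neq0\)), and hence \(f\) vanishes on \(B_1(x)\). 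Then comparing \(\Gamma_2(f)=K\Gamma(f)\) at neighbours of \(x\) with the curvature inequality should transport the equality \(\mathcal{K}_\infty=K\) along edges, via unique continuation through paths.

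The main obstacle is Step 2: extracting the exact numerical value \(2\) from the pointwise equality \(\Gamma_2(f)=K\Gamma(f)\). I would first try to show \(K\ge2\) using that \(K=\lambda_1\le\) a variational quotient built from \(f\), combined with the relation \(\sum(Lf)^2=K\sum\Gamma(f)\) and the pointwise decomposition \(\Gamma_2=\tfrac12\sum_y(\ldots)^2+\tfrac12(\Delta f)^2+\ldots\). I would then obtain \(K\le 2\) from the local curvature upper bound at a vertex where \(f\) attains its maximum.
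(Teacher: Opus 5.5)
There is a genuine gap, and it sits exactly where you expect it. Your Step~1 is correct, but Step~2 never produces the value $2$, and the routes you sketch would not get there. A variational quotient bounds $\lambda_1=K$ from above, so it cannot give $K\ge 2$. Extremality also does not force triangle contributions to vanish: Lichnerowicz-sharp graphs can have triangles at every vertex, e.g.\ $K_4\setminus\{e\}$ or the graphs $G_D^\star$ of the paper.

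Optimizing only over the values at distance two is aimed at the wrong place. Variations supported in $S_2(x)$, or at a single neighbour, give the $K$-free identities of Lemmas~\ref{lem:midpoint} and~\ref{lem:adjacent-edge}; in the latter, $K$ cancels.

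The missing idea is to polarize and then vary at the centre $x$ itself. The form $\psi\mapsto\Gamma_2(\psi)(x)-K\Gamma(\psi)(x)$ is positive semidefinite, and $f$ is a null vector for it by Step~1. Cauchy--Schwarz then gives $\Gamma_2(f,\psi)(x)=K\Gamma(f,\psi)(x)$ for all $\psi$, which is Lemma~\ref{lem:defect-nullity}. Take $\psi=\one_{\{x\}}$. One computes
\[
\Gamma(f,\psi)(x)=\tfrac K2 f(x),\qquad
L\Gamma(f,\psi)(x)=\tfrac K2\bigl(1-\deg(x)\bigr)f(x),
\]
\[
\Gamma(f,L\psi)(x)=-\tfrac K2\bigl(1+\deg(x)\bigr)f(x),\qquad
\Gamma(\psi,Lf)(x)=-\tfrac{K^2}{2}f(x).
\]
Hence
\[
 \Gamma_2(f,\one_{\{x\}})(x)-K\Gamma(f,\one_{\{x\}})(x)=\frac{K(2-K)}{4}\,f(x).
\]
Choosing $x$ with $f(x)\neq 0$ gives $K=2$. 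No curvature-matrix or triangle analysis is needed.

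Your Step~3 is also left open, since ``unique continuation'' is not an argument, and the case it worries about never occurs. For an eigenfunction,
\[
\Gamma_2(f)=\tfrac12 L\Gamma(f)-\Gamma(f,Lf)=\tfrac12 L\Gamma(f)+K\Gamma(f),
\]
so the pointwise equality from Step~1 says exactly $L\Gamma(f)=0$. On a finite connected graph, $\Gamma(f)$ is then constant, equal to $K\|f\|^2/|V|>0$. So $\Gamma(f)(x)>0$ at every vertex, and $\Gamma_2(f)(x)=K\Gamma(f)(x)$ gives $\mathcal K_\infty(x)\le K$ everywhere. Together with $\CD(K,\infty)$, this yields $\mathcal K_\infty\equiv K=2$.

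The paper does not reprove this statement; it quotes it from earlier work. The two ingredients above, the polarized identity and the constancy of $\Gamma$ on the $K$-eigenspace, are exactly what is recorded in Lemma~\ref{lem:defect-nullity}.
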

For the definition of $\mathcal{K}_\infty(x)$, see \Cref{def:CD}. Accordingly, the central object is the first eigenspace
\[
 E_2(G)=\ker(L_G+2I).
\]

A basis of \(E_2(G)\) defines a spectral embedding
\(\Phi:V(G)\to\mathbb R^{m_2(G)}\).  This embedding canonically partitions the edge set into two classes: an edge
\(xy\) is called \emph{visible} if \(\Phi(x)\neq\Phi(y)\), and
\emph{collapsed} if \(\Phi(x)=\Phi(y)\). This classification
is independent of the chosen basis.  Let \(G_{\mathrm{vis}}\) denote the
spanning subgraph of \(G\) consisting of the visible edges.

The first structural result describes every connected component of
\(G_{\mathrm{vis}}\) as a rigidly  layered graph.

\begin{theorem}
\label{thm:intro-visible-layer}
Let \(G\) be a Lichnerowicz-sharp graph and let \(H\) be a connected component
of \(G_{\mathrm{vis}}\).  Then there exists a vertex \(p\in V(H)\) with the
following properties.  Put
\[
 D=\deg_H(p),\qquad L_k=S_k^H(p)\quad(0\leq k\leq D),
\]
\begin{enumerate}
 \item \label{en:VerticesNumber} The sets \(L_0,\ldots,L_D\) form a partition of \(V(H)\), and
 \[
  |L_k|=\binom{D}{k}\quad(0\leq k\leq D),
  \qquad |V(H)|=2^D.
 \]

 \item Every \(x\in L_k\) has exactly \(k\) neighbors in \(L_{k-1}\), exactly
 \(D-k\) neighbors in \(L_{k+1}\), and no neighbors in $L_k$. 

 \item\label{en:ChildrenParent} For $1\leq k\leq D-1$ and any two distinct vertices \(x,y\in L_k\),
 \[
  \bigl|S_1^H(x)\cap S_1^H(y)\cap L_{k-1}\bigr|
  =\bigl|S_1^H(x)\cap S_1^H(y)\cap L_{k+1}\bigr|
  \in\{0,1\}.
 \]
\end{enumerate}
\end{theorem}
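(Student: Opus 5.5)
I would start from the local equality structure forced by sharpness. By \Cref{thm:first-sharp-K2}, $K=\mathcal K_\infty(x)=2$ at every vertex, and every $f\in E_2(G)$ attains equality in the Bochner-type argument behind \eqref{eq:Lich}. Concretely, summing $\Gamma_2(f)\geq 2\Gamma(f)$ against $\Delta f=-2f$ gives $\sum_x(\Gamma_2(f)-2\Gamma(f))(x)=0$, so $\Gamma_2(f)=2\Gamma(f)$ pointwise. Via the local curvature matrix at $x$, this pins down the $2$-hop behaviour of $f$ around every vertex. The identities I want to extract are these. For each visible edge $xy$ and each $f\in E_2$, the increments $f(y)-f(x)$ behave like coordinate directions of a cube. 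For two neighbors $y,z$ of $x$ joined through visible edges, there is exactly one ``completing'' vertex $w\sim y,z$, $w\neq x$, with $f(w)-f(z)=f(y)-f(x)$, i.e.\ $4$-cycles close up as in $H_D$. Finally, no visible triangle exists, since in a triangle $\Gamma_2$ equality would force the increments to vanish.

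The global step is to choose $p\in V(H)$ extremizing a suitable eigenfunction. I would take $f\in E_2$ maximizing $f(p)$ among vertices of $H$ for a generic $f$ (generic in the spectral embedding $\Phi$), and set $D=\deg_H(p)$. I then show by induction on $k$ that a vertex at $H$-distance $k$ from $p$ has exactly $k$ neighbors closer to $p$ and $D-k$ farther, with none in its own layer. There are three ingredients:
\begin{enumerate}
\item the absence of visible triangles, together with a bipartiteness argument (increments along visible edges change $f$ by signed amounts tied to the layer), rules out edges inside $L_k$;
\item the square-closing identity propagates the ``$k$ down, $D-k$ up'' count from $L_{k-1}$ to $L_k$, exactly as in the proof of \Cref{thm:LMP-rigidity}, but restricted to the visible component $H$ with its own degree $D$;
\item degree counting inside $H$ is consistent, because the collapsed edges carry no $f$-increment and hence drop out of the $\Gamma$ and $\Gamma_2$ computations along visible directions.
\end{enumerate}

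Once (ii) holds, double counting edges between $L_{k-1}$ and $L_k$ gives $|L_k|\,k=|L_{k-1}|(D-k+1)$. Hence $|L_k|=\binom Dk$, the process stops at $L_D$, and summing gives $|V(H)|=2^D$, which is (i). For (iii), take distinct $x,y\in L_k$. The square-closing identity says that any common lower neighbor $z$ of $x,y$ determines a unique common upper neighbor, and conversely. This gives equal counts. Two common lower neighbors $z,z'$ would yield two distinct closings of the same pair of directions, contradicting the uniqueness of the completing vertex, so the count is at most $1$.

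The main obstacle is justifying that the local cube identities hold for the visible subgraph even when $G$ is irregular and collapsed edges are present. The curvature-matrix equality involves all neighbors of $x$, including collapsed ones and non-visible two-step paths. I would first try to prove a lemma: at each vertex, equality in $\Gamma_2(f)=2\Gamma(f)$ forces every collapsed edge to contribute zero and decouples the visible directions, so that the visible star at $x$ behaves exactly like a hypercube star. A second delicate point is the choice of $p$: it must be a genuine ``bottom'' vertex with all visible neighbors in one layer, which is why I use a generic extremal eigenfunction rather than an arbitrary vertex.
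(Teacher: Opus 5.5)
There is a genuine gap. Your induction rests on a local square-closing lemma: for visible edges $xy,xz$ there is exactly one $w\neq x$ with $w\sim y,z$ and $f(w)-f(z)=f(y)-f(x)$. This is the exact square incidence of the regular case, and it is false for irregular Lichnerowicz-sharp graphs.

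Check it in \Cref{ex:four-dimensional-star-twist}, where $E_2(G)=\Span\{f\}$ with $f=2-k$ on $L_k$. At $x=e_{12}$ the edges to $y=a_1$ and $z=b_1$ are visible. The vertices $w\neq x$ adjacent to both with $f(w)=f(b_1)+1=0$ are $e_{13}$ and $e_{14}$, so there are two completions. For $y=a_1$, $z=b_2$ the only common neighbour is $e_{12}$, so there are none.

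Hence the visible star cannot ``behave exactly like a hypercube star''. Propagating unique completions as in \Cref{thm:LMP-rigidity} would prove $H\cong H_D$, which that example refutes; note also that this theorem's proof uses the multiplicity hypothesis $\lambda_\Delta=K$, which is unavailable here. This is exactly why the statement asserts only the twisted layering. Consequently your step (ii) has no valid foundation. The same holds for your argument for (iii), namely a bijection between common lower and upper neighbours and the bound $\le 1$ from uniqueness of completion. The absence of visible triangles is likewise only asserted.

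The paper avoids squares entirely.

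\emph{Unique maximum.} You first need that a generic $f$ has a \emph{unique} maximum on each visible component (\Cref{thm:unique-maximum}). This is not automatic; you name the choice of $p$ as delicate but do not address it. The paper uses it repeatedly, e.g.\ to show that collapsed neighbours of $p$ have no edges to $S_1^H(p)$.

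\emph{Axes at $p$.} The $1$-ball analysis at $p$ produces pairwise orthogonal axes $v_1,\dots,v_r$ spanning $\mathbb R^m$ with $\mathcal A(v_i)=-2v_i^{\otimes 3}$. It also shows that every visible edge is parallel to one axis (\Cref{thm:global-axes}).

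\emph{Bounding parents.} Fix $w\in L_{k+1}$ and match coefficients in $Q(w)=Q(p)$ and $T(w)=\mathcal A(\Phi(w))$, together with the bound $t\geq-1$ of \Cref{lem:residual-bound}. This gives that the number of parents of $w$ in direction $v_i$ is at most the coefficient $\mu_i(w)$ of $-v_i$ in $\Phi(w)-\Phi(p)$. Hence $w$ has at most $k+1$ parents.

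\emph{Forcing equality.} Double count edges and parent pairs between $L_k$ and $L_{k+1}$, using the inductive fact that two vertices of $L_k$ have a common child iff they have a common parent. This forces exactly $k+1$ parents. The moment identities then force every other visible increment at $w$ to equal $-v_i$.

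\emph{Assertion (iii).} Equality of the counts follows from the vector midpoint identity (\Cref{lem:translated-balance}), an averaging argument rather than a bijection. The bound $\le 1$ is inherited from layer $k$: two common parents of $w_1,w_2\in L_{k+1}$ would be two vertices of $L_k$ with two common children.

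Your final double counting for (i) is fine once (ii) is in hand.
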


For a regular Lichnerowicz-sharp graph, every connected component of
\(G_{\mathrm{vis}}\) is a hypercube.  In the irregular setting, this need not
be the case: \(G_{\mathrm{vis}}\) may have connected components that are not
isomorphic to hypercubes.  A concrete example is the graph \(H\) in \Cref{ex:four-dimensional-star-twist}; see
\Cref{sec:NonHypercubeFibers} for a general family of such examples.
\begin{figure}[htbp]
  \centering

\begin{tikzpicture}[
  x=0.75cm,
  y=0.62cm,
  graph edge/.style={draw=black,line width=0.55pt},
  graph vertex/.style={circle,draw=black,fill=white,line width=0.55pt,
    minimum size=4.8pt,inner sep=0pt}
]
\begin{scope}
  \coordinate (hc-p) at (0,0);
  \coordinate (hc-q) at (6,0);
  \foreach \i/\y in {1/1.5,2/0.5,3/-0.5,4/-1.5}{
    \coordinate (hc-a\i) at (1.35,\y);
  }
  \foreach \i/\y in {4/1.5,3/0.5,2/-0.5,1/-1.5}{
    \coordinate (hc-b\i) at (4.65,\y);
  }
  \foreach \e/\y in {12/2,13/1.2,14/0.4,23/-0.4,24/-1.2,34/-2}{
    \coordinate (hc-e\e) at (3,\y);
  }
  \foreach \i in {1,2,3,4}{
    \draw[graph edge] (hc-p)--(hc-a\i);
    \draw[graph edge] (hc-b\i)--(hc-q);
  }
  \foreach \e/\i/\j in {12/1/2,13/1/3,14/1/4,23/2/3,24/2/4,34/3/4}{
    \draw[graph edge] (hc-a\i)--(hc-e\e);
    \draw[graph edge] (hc-a\j)--(hc-e\e);
  }
  \foreach \e/\i/\j in {12/3/4,13/2/4,14/2/3,23/1/4,24/1/3,34/1/2}{
    \draw[graph edge] (hc-e\e)--(hc-b\i);
    \draw[graph edge] (hc-e\e)--(hc-b\j);
  }
  \foreach \v in {hc-p,hc-a1,hc-a2,hc-a3,hc-a4,
    hc-e12,hc-e13,hc-e14,hc-e23,hc-e24,hc-e34,
    hc-b1,hc-b2,hc-b3,hc-b4,hc-q}{
    \node[graph vertex] at (\v) {};
  }
  \node at (3,-2.7) {$4$-dimensional hypercube $H_4$};
\end{scope}

\begin{scope}[xshift=7.8cm]
  \coordinate (st-p) at (0,0);
  \coordinate (st-q) at (6,0);
  \foreach \i/\y in {1/1.5,2/0.5,3/-0.5,4/-1.5}{
    \coordinate (st-a\i) at (1.35,\y);
    \coordinate (st-b\i) at (4.65,\y);
  }
  \foreach \e/\y in {12/2,13/1.2,14/0.4,23/-0.4,24/-1.2,34/-2}{
    \coordinate (st-e\e) at (3,\y);
  }
  \foreach \i in {1,2,3,4}{
    \draw[graph edge] (st-p)--(st-a\i);
    \draw[graph edge] (st-b\i)--(st-q);
  }
  \foreach \e/\i/\j in {12/1/2,13/1/3,14/1/4,23/2/3,24/2/4,34/3/4}{
    \draw[graph edge] (st-a\i)--(st-e\e);
    \draw[graph edge] (st-a\j)--(st-e\e);
    \draw[graph edge] (st-e\e)--(st-b\i);
    \draw[graph edge] (st-e\e)--(st-b\j);
  }
  \foreach \v in {st-p,st-a1,st-a2,st-a3,st-a4,
    st-e12,st-e13,st-e14,st-e23,st-e24,st-e34,
    st-b1,st-b2,st-b3,st-b4,st-q}{
    \node[graph vertex] at (\v) {};
  }
  \node at (3,-2.7) {$H$};
\end{scope}
\end{tikzpicture}
  \caption{Comparison $H$ with $H_4$}
\end{figure}

Connected components of \(G_{\mathrm{vis}}\) need not be isomorphic to
hypercubes. Nevertheless, the preceding structural properties imply that
they are adjacency- and Laplacian-cospectral with the corresponding
hypercubes. In this paper, we call a connected graph \(\widetilde G\) a
twisted \(D\)-dimensional hypercube if there exists a vertex \(p\in V(\widetilde G)\) such
that, with
\[
D=\deg_{\widetilde G}(p),
\qquad
L_k=S_k^{\widetilde G}(p)\quad (0\leq k\leq D),
\]
conditions \ref{en:VerticesNumber}--\ref{en:ChildrenParent} in \Cref{thm:intro-visible-layer} hold. We then
obtain the following cospectrality theorem.
\begin{theorem}
  Let $\widetilde{G}$ be a twisted $D$-dimensional hypercube. Then $\widetilde{G}$ is adjacency- and Laplacian-cospectral with $D$-dimensional hypercube.
\end{theorem}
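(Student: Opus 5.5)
The plan is to exhibit an $\mathfrak{sl}_2$-representation structure on $\R^{V(\widetilde G)}$ coming from the layering. Representation theory then pins down the adjacency spectrum purely in terms of the layer sizes $|L_k|=\binom{D}{k}$, which are the same as for $H_D$.

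First, note from condition (ii) of \Cref{thm:intro-visible-layer} that every $x\in L_k$ has degree $k+(D-k)=D$. Moreover, there are no edges inside a layer, and no edges between $L_j$ and $L_k$ with $|j-k|\ge 2$, since $L_k$ are distance spheres. Hence $\widetilde G$ is $D$-regular. Its Laplacian is $L=A-DI$, so Laplacian-cospectrality follows from adjacency-cospectrality, because $H_D$ is also $D$-regular.

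Next, write $\R^{V}=\bigoplus_{k=0}^D \R^{L_k}$. Define the up operator $U$ by $(Uf)(y)=\sum_{x\in L_{k},\,x\sim y} f(x)$ for $y\in L_{k+1}$ and $f$ supported on $L_k$. Let the down operator be its transpose $U^{T}$, and let $h$ act on $\R^{L_k}$ as multiplication by $D-2k$. Then $A=U+U^{T}$.

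The key computation is $U^{T}U-UU^{T}=h$ on each $\R^{L_k}$. For $f\in\R^{L_k}$ and $x\in L_k$, the diagonal part of $(U^{T}Uf)(x)$ counts pairs (child $c$ of $x$, parent of $c$ equal to $x$), which gives $(D-k)f(x)$. The off-diagonal coefficient of $f(y)$ for $y\in L_k\setminus\{x\}$ is the number of common neighbours of $x,y$ in $L_{k+1}$. Similarly $(UU^{T}f)(x)$ has diagonal $k f(x)$, and its off-diagonal coefficient is the number of common neighbours in $L_{k-1}$. Condition (iii) says these two off-diagonal counts agree for $1\le k\le D-1$. For $k=0$ and $k=D$ the layer is a single vertex, so there are no off-diagonal terms. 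Hence $[U^{T},U]=(D-2k)I=h$. Together with $[h,U^{T}]=2U^{T}$ and $[h,U]=-2U$, which are immediate from the grading, the triple $(e,f,h)=(U^{T},U,h)$ is an $\mathfrak{sl}_2$-triple. Thus $\R^{V}$ is a finite-dimensional $\mathfrak{sl}_2$-module whose $h$-weight space of weight $D-2k$ has dimension $\binom{D}{k}$.

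By complete reducibility, the module decomposes into irreducibles $V_n$. The number of copies of each $V_n$ is determined by the weight multiplicities: it equals $\dim(\text{weight } n)-\dim(\text{weight } n+2)$. On $V_n$, the operator $e+f$ is conjugate to $h$, since $\exp$ of a suitable multiple of $e-f$ rotates $h$ to $e+f$. Hence it has eigenvalues $n,n-2,\dots,-n$. Therefore the spectrum of $A=e+f$ depends only on the sequence $\binom{D}{k}$, $0\le k\le D$. The hypercube $H_D$, layered from any vertex, satisfies the same hypotheses with the same weight multiplicities. So $A(\widetilde G)$ and $A(H_D)$ have the same spectrum, namely $D-2k$ with multiplicity $\binom Dk$. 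The main point needing care is the commutator computation, specifically checking that the hypotheses exactly cancel the off-diagonal terms. The rest is standard $\mathfrak{sl}_2$ theory.
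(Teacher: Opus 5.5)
Your proposal is correct and follows essentially the paper's route: the paper derives the same $\mathfrak{sl}_2$-relations $[\mathrm D,\mathrm U]=\mathrm H_0$, $[\mathrm H_0,\mathrm D]=2\mathrm D$, $[\mathrm H_0,\mathrm U]=-2\mathrm U$ and $A_H=\mathrm U+\mathrm D$ from the layer conditions, and then conjugates $A_H$ to $\mathrm H_0$ with $\exp\bigl(\pm\tfrac{\pi}{2}\cdot\tfrac12(\mathrm D-\mathrm U)\bigr)$. The only difference is that the paper applies this rotation on the whole space, via the ODE $\ddot F=-F$, and so obtains an orthogonal similarity directly; your detour through complete reducibility and the irreducible summands $V_n$ is harmless but unnecessary.
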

This cospectrality result is also of independent interest in spectral graph theory.

Let \(G'\) be the quotient whose
vertices are the connected components of \(G_{\mathrm{vis}}\), with two
components adjacent when an edge of \(G\) joins them.  Let \(\hat G\) be
obtained from \(G\) by deleting precisely the collapsed edges whose endpoints
lie in the same component of \(G_{\mathrm{vis}}\). We state our main result of this paper.
\begin{theorem}
        \label{thm:BundleStructure}
        Let $G$ be a Lichnerowicz-sharp graph. Let $H$ be a connected component of $G_{\vis}$. Then $\hat{G}$ is a $H$-bundle over $G^\prime$ and admits a section. Moreover, if $G^\prime$ is not a singleton graph, then $G^\prime$ is also $\CD(2,\infty)$ with $\lambda_1(G^\prime)>2$. 
\end{theorem}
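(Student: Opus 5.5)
The plan is to build the bundle structure directly from the spectral embedding $\Phi$ and the layered description in \Cref{thm:intro-visible-layer}. I will rely on the local identities that sharpness forces. Since $K=\mathcal K_\infty(x)=2$ at every vertex (\Cref{thm:first-sharp-K2}), every $f\in E_2(G)$ attains equality in the $\Gamma_2\geq 2\Gamma$ inequality at every vertex. Expanding $\Gamma_2(f)(x)=2\Gamma(f)(x)$ gives pointwise identities on $2$-balls. These say, roughly, that along any visible edge $xy$ the increment $f(y)-f(x)$ is transported rigidly to parallel edges, and that across a collapsed edge $xz$ the values of $f$ on the neighbourhoods of $x$ and $z$ match.

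The first step is to show that every edge of $G$ between two distinct components $H_1,H_2$ of $\Gvis$ is collapsed, and that such edges form a perfect matching $\sigma:V(H_1)\to V(H_2)$ which is a graph isomorphism. Take a collapsed edge $xz$ with $x\in H_1$, $z\in H_2$. Apply the equality case at $x$ and at $z$ to each $f\in E_2$; this should show that for each visible neighbour $y$ of $x$ there is a unique visible neighbour $w$ of $z$ with $\Phi(w)=\Phi(y)$, and that $yw$ is a collapsed edge. Propagating along paths in $H_1$, which is connected, then gives a bijection $V(H_1)\to V(H_2)$ realised by edges of $G$ and preserving adjacency. Because both components have $2^D$ vertices by \Cref{thm:intro-visible-layer}\ref{en:VerticesNumber}, all components have the same $D$ and the map is an isomorphism. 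To see that these are the only edges between $H_1$ and $H_2$, I will use $\Phi$: within a component, $\Phi$ is injective. This holds because the layers encode distances from $p$, and for a collapsed edge $\Phi$ separates the vertices of a twisted cube exactly as coordinates separate the vertices of $H_D$. Hence each $x$ has at most one neighbour in $H_2$ with equal $\Phi$-value. Setting $\sigma_{H_1H_2}$ to be this matching identifies $\hat G$ with $G'\square_\sigma H$. The collapsed edges inside a component are exactly the deleted ones, so they play no role. For a section, I choose a spanning tree of $G'$ and select vertices consistently along it. Since the definition of a section only needs one vertex per fibre such that adjacent fibres' chosen vertices are adjacent, I will instead take a set of vertices with a common $\Phi$-value (e.g.\ the $p$ vertices). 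This may need care: a better choice is $\Phi^{-1}(v)$ for a fixed $v=\Phi(p)$, which meets each fibre exactly once by injectivity and the matching property.

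For the base, every $f\in E_2(G)$ restricted to a fibre is a $2$-eigenfunction of $H$. Functions constant on fibres are exactly lifts of functions on $G'$, and by the bundle structure $L_{\hat G}$ acting on lifts is $L_{G'}$. Collapsed intra-fibre edges do not affect lifts. If $g$ is an eigenfunction of $-L_{G'}$ with eigenvalue $\mu$, its lift is an eigenfunction of $-L_G$ with eigenvalue $\mu$, so $\mu\geq 2$. If $\mu=2$, the lift would lie in $E_2$ and be constant on fibres, contradicting that fibre components contain visible edges on which some $K$-eigenfunction varies. More precisely, the lift would have to be distinguishable across visible inter-fibre edges, but all inter-fibre edges are collapsed, so the lift is constant on all of $G$. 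Hence $\lambda_1(G')>2$. The curvature statement $\CD(2,\infty)$ for $G'$ follows by lifting: $\Gamma$ and $\Gamma_2$ of a lift at $(x,u)$ equal those of $g$ at $x$ in $G'$. For edges between a fibre and itself this uses $\sigma$ being an automorphism and the absence of extra edges. Then the curvature inequality in $G$ transfers to $G'$.

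I expect the main obstacle to be the first step: extracting from the pointwise $\Gamma_2$ equality that a single collapsed edge between fibres forces a full isomorphic matching. This must be done without regularity and while the deleted intra-fibre collapsed edges perturb the local $\Gamma_2$ computation.
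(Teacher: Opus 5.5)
\paragraph{The gap: uniqueness of cross-fibre neighbours.}
Your plan breaks where you need each vertex of $H_1$ to have \emph{exactly one} neighbour in $H_2$. That uniqueness is what makes your propagation along paths of $H_1$ path-independent. It is also what makes the resulting map injective, what turns the inter-fibre edges into a perfect matching, and what your lifting computations for $G'$ rely on.

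You deduce it from injectivity of $\Phi$ on a visible component, and that is false in general. By \Cref{thm:intrinsic-Gamma}, $\Phi(x)=P-\sum_i\mu_i(x)v_i$ with $0\le\mu_i(x)\le n_i$. So $\Phi$ takes at most $\prod_i(n_i+1)$ values on the $2^D$ vertices of a fibre, which is fewer than $2^D$ as soon as some $n_i\ge 2$, i.e.\ whenever $m_2(G)<D$. Two examples from the paper show this:
\begin{itemize}
\item In \Cref{ex:four-dimensional-star-twist}, $\Phi$ takes $5$ values on $16$ vertices.
\item In \Cref{ex:bundle-exception}, each fibre is the $4$-cycle $(i,1),(i,2),(i,4),(i,3)$ with $\Phi(i,2)=\Phi(i,3)$.
\end{itemize}
Every inter-fibre edge is collapsed, so all neighbours of $x$ in $H_2$ have $\Phi$-value $\Phi(x)$. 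Your argument therefore cannot exclude $(i,2)$ being adjacent to both $(i',2)$ and $(i',3)$.

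For the same reason, your transport claim of ``a unique visible neighbour $w$ of $z$ with $\Phi(w)=\Phi(y)$'' is wrong as stated. For $x=(i,1)$, $z=(i',1)$, $y=(i,2)$ there are two such $w$. What the sharpness identities actually give is \eqref{eq:midpoint-scalar} applied to $y$ and $z$. Each common neighbour lies either in $H_1$, with $\Phi$-value $\Phi(x)$, or in $H_2$, with $\Phi$-value $\Phi(y)$, and the two counts must agree. This is the entrywise form of the paper's intertwining relation $A_CM=MA_D$. It gives existence of cross neighbours everywhere, but not uniqueness.

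\paragraph{What is missing and how the paper supplies it.}
You need a global counting argument, which the paper gives in \Cref{thm:component-matching}:
\begin{itemize}
\item Regularity of the components (\Cref{cor:component-size}) turns the intertwining into $A_C(M\one_D)=d_D\,M\one_D$.
\item Perron--Frobenius then gives $d_C=d_D$ and $M\one_D=a\one_C$; symmetrically, $M^{T}\one_C=b\one_D$.
\item Since $|C|=|D|=2^d$, counting edges gives $a=b$.
\item Uniqueness of the maximum of a generic $2$-eigenfunction (\Cref{thm:unique-maximum}) forces $a=1$. Indeed, if $f(p_C)\ge f(p_D)$, all $a$ neighbours of $p_C$ in $D$ have value $f(p_C)\ge\max_D f$.
\end{itemize}
Once the matching is established, your identification of $\hat G$ as a bundle goes through as in the paper. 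So do your base arguments, namely $\CD(2,\infty)$ for $G'$ and $\lambda_1(G')>2$ via lifts.

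For the section, the statement that $\Phi^{-1}(\Phi(p))$ meets each fibre exactly once is true, but not because of injectivity. Justify it as the paper does: $\sigma_{CD}$ preserves $f$, so it sends the unique maximiser $p_C$ to the unique maximiser $p_D$.
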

Under a natural fiberwise degree condition, the preceding bundle structure
strengthens to a hypercube-bundle structure on \(G\).
\begin{theorem}[Hypercube bundle]
        \label{thm:hypercube-bundle}
        Let $G$ be a Lichnerowicz-sharp graph. If $\deg_G$ is constant on every
        connected component of $G_{\vis}$, then $G$ is a
        hypercube bundle.
\end{theorem}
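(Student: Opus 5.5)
The plan is to derive \Cref{thm:hypercube-bundle} from \Cref{thm:BundleStructure} in two steps. First, the degree condition forces every component of $G_{\vis}$ to be a genuine hypercube. Second, the collapsed edges lying inside a component must be absent, so that $\hat G=G$.

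Fix a component $H$ of $G_{\vis}$ and the vertex $p$ of \Cref{thm:intro-visible-layer}, with $D=\deg_H(p)$. By \Cref{thm:BundleStructure}, $\hat G$ is an $H$-bundle over $G'$. Hence every vertex $x\in V(H)$ has the same number of edges to each neighboring component of $H$ in $G'$, namely one edge per adjacent fiber. So
\[
\deg_G(x)=\deg_H(x)+\deg_{G'}(H)+c(x),
\]
where $c(x)$ is the number of collapsed edges at $x$ with both endpoints in $H$. By part (ii) of \Cref{thm:intro-visible-layer}, $\deg_H(x)=k+(D-k)=D$ for every $x$, so $H$ is $D$-regular. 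The hypothesis that $\deg_G$ is constant on $H$ therefore gives that $c(x)$ is constant on $H$.

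The key step, and the one I expect to be the main obstacle, is to show $c\equiv 0$. The approach uses the spectral embedding $\Phi$. Visible edges change $\Phi$, and part (ii) says there are no edges inside a layer. One would like to show that $\Phi$ separates the vertices of $H$ completely, so that $\Phi$ is injective on $V(H)$; then a collapsed edge inside $H$ would join two vertices with equal $\Phi$-values, which is a contradiction.

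To get injectivity, I would use the eigenfunction equation $Lf=-2f$ together with the equality case of the $\CD(2,\infty)$ estimate from \Cref{thm:first-sharp-K2}, which holds pointwise with $\mathcal K_\infty(x)=2$. Equality in $\Gamma_2(f)\ge 2\Gamma(f)$ should force, for each $K$-eigenfunction $f$, the relation $f(x)+f(z)=f(y)+f(w)$ along every 4-cycle formed by visible edges. This propagates $\Phi$ as an affine $\{\pm\}$-cube coordinate on $H$: each vertex of $L_k$ gets $\Phi(p)+\sum_{i\in S}v_i$ for a $k$-subset $S$ of $D$ independent directions $v_i$, where the $v_i$ are the displacements along the edges at $p$.

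If instead injectivity fails, I would argue by counting. Take a collapsed edge $xy$ inside $H$ with $x\in L_k$ and $y\in L_j$. Since $c$ is constant, every vertex carries $c$ such edges. Then compare $\Gamma_2$ at $p$ and at a vertex of $L_D$. A collapsed edge at $p$ would add a neighbor with the same $\Phi$, so the same value for every eigenfunction. This would change the $\Gamma_2$ computation at $p$ by a positive amount, contradicting equality. Hence $c(p)=0$, and by constancy $c\equiv0$.

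Once $c\equiv0$, we have $\hat G=G$. It remains to show that $H$ itself is a hypercube, not merely twisted. The affine coordinates above make the map $x\mapsto S(x)$ a graph embedding $H\hookrightarrow H_D$. Both graphs have $2^D$ vertices by part (i) and are $D$-regular, so this embedding is an isomorphism. Therefore $G=\hat G$ is an $H_D$-bundle over $G'$.

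The hardest point is establishing independence of the directions $v_i$ and the additivity along 4-cycles. For this I would first try to reuse the $\Gamma_2$-equality computations underlying the regular case \Cref{thm:intro-BundleStructure}, applied fiberwise, since the degree condition makes each fiber locally look regular.
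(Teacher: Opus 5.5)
Your opening reduction is correct and matches the paper. By \Cref{thm:component-matching}, each vertex of $H$ has exactly one neighbour in every adjacent fiber, and $H$ is $D$-regular, so the hypothesis makes $c$ constant on $H$. Your argument for $c\equiv0$, however, does not work. The injectivity route needs the displacements $\Phi(x)-\Phi(p)$, $x\in S_1^H(p)$, to be $D$ linearly independent vectors, which requires $m_2(G)\geq D$, and nothing gives this. \Cref{thm:global-axes} only shows that distinct displacements are orthogonal; they may coincide (classes $I_i$ with $n_i>1$). Moreover, the square and midpoint identities you derive from $\Gamma_2$-equality hold verbatim in \Cref{ex:four-dimensional-star-twist}, where $m_2(G)=1<4=D$. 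So they cannot yield independence, and your sketch never says where the degree hypothesis would enter. The fallback is wrong as stated. Collapsed edges at $p$ do occur in sharp graphs: they are the edges from $p$ to $Z=S_1^G(p)\setminus X$, going into adjacent fibers, e.g.\ in $K_3\square K_2$. An argument that every collapsed neighbour of $p$ increases $\Gamma_2$ at $p$ would therefore prove too much. You only need to exclude collapsed neighbours of $p$ \emph{inside} $H$. This is immediate once $p$ is taken to be the unique maximizer of a generic $f$ on $H$ (\Cref{thm:unique-maximum}), since such a neighbour $y$ would satisfy $f(y)=f(p)$. Together with the constancy of $c$, this gives $c\equiv0$ and $G=\hat G$, exactly as in the paper.

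The identification $H\cong H_D$ has a more serious gap. The coordinate map $x\mapsto S(x)$ is not available. \Cref{thm:intrinsic-Gamma} only gives $\Phi(x)=P-\sum_i\mu_i(x)v_i$ with a multi-index $0\le\mu_i(x)\le n_i$. The visible component in \Cref{ex:four-dimensional-star-twist} has all the layer and incidence properties of \Cref{thm:intro-visible-layer} without being a hypercube. So after proving $G=\hat G$ you must use the degree hypothesis a second time, and this is the idea your proposal lacks. The paper does it through the curvature matrix:
\begin{enumerate}
\item Take $x\in H$, a fiber neighbour $v$, and an adjacent fiber $C'$. The vertices $v$ and $\sigma_{HC'}(x)$ have exactly the common neighbours $x$ and $\sigma_{HC'}(v)$, so the mixed entries $1-2\varepsilon-2\omega$ of $A_\infty^G(x)$ vanish.
\item For two fiber neighbours, the quantities $\varepsilon,\omega,t_v$ agree with those computed in $H$, while $\Omega_v^G=\Omega_v^H+\tfrac12\deg_{G'}(H)$.
\item By hypothesis $\deg_G(x)=\deg_G(v)=D+\deg_{G'}(H)$, so \eqref{eq:CKLP-diag} shows the diagonal entries also agree. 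Hence $A^G_\infty(x)$ is block diagonal with fiber block $A^H_\infty(x)$, giving $A^H_\infty(x)\succeq 2I$, and $H$ satisfies $\CD(2,\infty)$.
\item Since $G=\hat G$, $f|_H$ is a $2$-eigenfunction of $H$ that changes along every edge of $H$. So $H$ is Lichnerowicz-sharp with all edges visible, and \Cref{thm:hypercube-rigidity} (equality in the Myers bound) gives $H\cong H_D$.
\end{enumerate}
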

 Thus, \Cref{thm:BundleStructure} extends 
\Cref{thm:intro-BundleStructure} in regular case to the irregular setting. In general,
however, one cannot replace \(\hat G\) by \(G\) in \Cref{thm:BundleStructure}; see
\Cref{ex:bundle-exception} for a counterexample.

\subsection{Proof strategy}

The proof in the regular setting \cite{ding2026} relies on \emph{exact square
incidence}.  Ding, Liu, and Zhou prove that if \(xy\) is visible and
\(z\in S_1^G(x)\setminus\{y\}\), then \(y\not\sim z\) and there exists a
unique vertex \(w\in S_2^G(x)\) adjacent to both \(y\) and \(z\), with
\[
 S_1^G(x)\cap S_1^G(w)=\{y,z\};
\]
see \cite[Definition~5.7 and Proposition~5.14]{ding2026}.  These squares give
canonical transports of neighbourhoods across edges.  Iterating these transports shows that the connected components of
\(G_{\mathrm{vis}}\) are hypercubes and produces the transition maps
of a hypercube bundle.  

This mechanism does not survive without regularity: the exact-square
conclusion itself is false, rather than merely unavailable to the proof.  For
example, let \(G=K_4\setminus\{e\}\), where \(e=ab\), and denote the other
two vertices by \(c\) and \(d\).  This graph satisfies \(\CD(2,\infty)\) and
\(\lambda_1(G)=2\), while \(E_2(G)\) is spanned by the function taking the
values \(1,-1,0,0\) at \(a,b,c,d\), respectively.  Hence \(ac\) and \(ad\)
are visible, whereas \(cd\) is collapsed.  Thus the visible edge \(ac\) lies
in the triangle with vertices \(a,c,d\), contradicting the nonadjacency
required by exact square incidence.  Consequently, neither the
hypercube structure of the fibers nor the transition maps can be obtained by
propagating individual squares.

Our replacement begins with identities that remain valid without regularity.
Sharpness implies that \(\Gamma(a,b)\) is constant for all
\(a,b\in E_2(G)\) and gives the midpoint and collapsed-edge averaging formulas
\eqref{eq:midpoint-scalar} and \eqref{eq:collapsed-average}.  Choose a $2$-eigenfunction $f$
, which takes distinct values at the endpoints of every
visible edge. Through these identities, we show that \(f\) has a unique maximum and a unique minimum
on every connected component of \(G_{\mathrm{vis}}\)
(\Cref{thm:unique-maximum}). The maximum in each component therefore serves as a root for the subsequent
structural analysis.

Fix such a maximum \(p\) in a connected component \(H\) of
\(G_{\mathrm{vis}}\), put \(D=\deg_H(p)\), and let \(L_k=S_k^H(p)\).  Starting
from \(L_0=\{p\}\), we construct the distance layers inductively.  Suppose
that the required structure is known through \(L_k\).  The second- and
third-moment identities \eqref{eq:w-second-moment} and
\eqref{eq:w-moment-sum} first bound the number of neighbours that a vertex in
\(L_{k+1}\) can have in \(L_k\).  Then formulas
\eqref{eq:edge-layer-count} and \eqref{eq:Gamma-edge-count} force this bound to
be equal. Substituting the resulting equality back into the moment identities
gives \eqref{eq:parent-child-count}.  Thus every \(x\in L_k\) has exactly \(k\)
neighbours in \(L_{k-1}\) and \(D-k\) neighbours in \(L_{k+1}\).

For \(x\in L_k\), write \(\Par(x)\) and \(\Ch(x)\) for its neighbours in
\(L_{k-1}\) and \(L_{k+1}\), respectively.  The identity
\eqref{eq:translated-balance}, together with the inductive uniqueness of a
common child, gives \eqref{eq:edge-iff-child}.  Hence, for distinct
\(x,y\in L_k\),
\[
 |\Par(x)\cap\Par(y)|
 =|\Ch(x)\cap\Ch(y)|\in\{0,1\}.
\]
This completes the induction and proves the layered structure in
\Cref{thm:intrinsic-Gamma}.

It remains to construct the bundle without square-based transport.  For two
adjacent components \(C\) and \(D\), let \(M\) record the edges between them.
The midpoint identity gives the relation
\[
 A_CM=MA_D.
\]
Perron--Frobenius theory, the structure of the components, and uniqueness of
their extrema force \(M\) to be a permutation matrix and its associated
bijection to be an isomorphism (\Cref{thm:component-matching}).  These
isomorphisms provide the transition maps of the bundle \(\hat G\to G'\), and
the componentwise maxima form a section. 

\subsection{The use of AI}
In part of this work, the authors were assisted by AI tools. In particular, we were assisted to identify the second- and third-order tensors in \Cref{eq:SecondOrder} and \Cref{eq:ThirdOrder}, which play an important role in the proof of our main result. We are also assisted to constructing many important examples that have significantly deepened our understanding of the problem.

All mathematical arguments, computations, and conclusions were independently verified by the authors. No text in this article was written by AI.

\section{Preliminaries}

Let $G=(V,E)$ be a finite connected simple unweighted graph.  We use the
non-normalized Laplacian
\begin{equation*}
 Lh(x)=\sum_{y\sim x}\bigl(h(y)-h(x)\bigr).
\end{equation*}
The eigenvalues of $-L$ are ordered as
\[
  0=\lambda_0<\lambda_1\leq \lambda_2\leq\cdots,
\]
with multiplicity.  For $K>0$, put
\[
  E_K:=\ker(L+KI),\quad m_K:=\dim E_K.
\]
We write $d_G$ for the combinatorial distance, and
\[
        S_r(x):=\{y\in V:d_G(x,y)=r\},\quad
        B_r(x):=\{y\in V:d_G(x,y)\leq r\}.
\]
For any two adjacent vertices $x,y$, we write $xy\in E$ or $x\sim y$.

The $d$-dimensional hypercube $H_d$ is the graph with vertex set $ \mathbb{F}_2^d$, where two vertices are adjacent if and only if they differ in exactly one coordinate; $H_0$ denotes the one-vertex graph.

For functions $f,g,h:V\to\R$, define
\begin{align}
 \label{eq:Gamma-def}
 \Gamma(f,g)(x)
 &=\frac12\sum_{y\sim x}
   \bigl(f(y)-f(x)\bigr)\bigl(g(y)-g(x)\bigr),\\
  \label{eq:Gamma2-def}
 \Gamma_2(f,g)
 &=\frac12\bigl(L\Gamma(f,g)-\Gamma(g,Lf)-\Gamma(f,Lg)\bigr),\\
 T_3(f,g,h)(x)&=\frac{1}{2}\sum_{y\sim x}(f(y)-f(x))(g(y)-g(x))(h(y)-h(x)).
\end{align}
Write $\Gamma(g)=\Gamma(g,g)$ and $\Gamma_2(g)=\Gamma_2(g,g)$.

 We have the following product formula for $\Gamma$.
 \begin{lemma}
     Let $f,g,h: V\to\mathbb{R}$ be three functions. Then
     \begin{equation}
     \label{eq:ProductFormulaOfGamma}
         \Gamma(f,gh)=g\Gamma(f,h)+h\Gamma(f,g)+T_3(f,g,h).
     \end{equation}
 \end{lemma}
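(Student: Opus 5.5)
The identity is local, so I will verify it pointwise at a fixed vertex $x$ by expanding the definitions \eqref{eq:Gamma-def} and of $T_3$ as sums over $y\sim x$. Since every term is a sum over the same neighbours, it is enough to prove the corresponding identity for each single summand.

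For fixed $x$ and $y\sim x$, write $a=f(y)-f(x)$, $b=g(y)-g(x)$, $c=h(y)-h(x)$. The first step is the discrete Leibniz rule for the increment of the product:
\[
g(y)h(y)-g(x)h(x)=g(x)\,c+h(x)\,b+b\,c .
\]
This follows by writing $g(y)=g(x)+b$ and $h(y)=h(x)+c$ and expanding the product.

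Next I multiply this identity by $\tfrac12 a$ and sum over $y\sim x$. The values $g(x)$ and $h(x)$ do not depend on $y$, so they come out of the sum. The resulting three sums are, by definition, $g(x)\Gamma(f,h)(x)$, $h(x)\Gamma(f,g)(x)$ and $\tfrac12\sum_{y\sim x}abc=T_3(f,g,h)(x)$. The left-hand side is $\Gamma(f,gh)(x)$, which gives \eqref{eq:ProductFormulaOfGamma}.

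There is no real obstacle here. The only point needing care is that the correction term $bc$ carries no factor $\tfrac12$ in the Leibniz rule, and it is exactly this term that matches the normalization $\tfrac12$ in the definition of $T_3$.
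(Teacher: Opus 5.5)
Your proof is correct and is essentially the paper's argument: both expand $\Gamma(f,gh)(x)$ over $y\sim x$ and split the increment of the product. The paper writes this as $g(x)\bigl(h(y)-h(x)\bigr)+h(y)\bigl(g(y)-g(x)\bigr)$ and then substitutes $h(y)=h(x)+\bigl(h(y)-h(x)\bigr)$, which amounts to the same identity as your discrete Leibniz rule $g(x)c+h(x)b+bc$.
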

 \begin{proof}
     \begin{equation*}
     \begin{aligned}
         \Gamma(f,gh)(x)&=\frac{1}{2}\sum_{y\sim x}(f(y)-f(x))(g(y)h(y)-g(x)h(x))\\
         &=g(x)\Gamma(f,h)(x)+\frac{1}{2}\sum_{y\sim x}h(y)(f(y)-f(x))(g(y)-g(x))\\
         &=g(x)\Gamma(f,h)(x)+h(x)\Gamma(f,g)(x)+T_3(f,g,h)(x).
     \end{aligned}
     \end{equation*}
 \end{proof}
\begin{definition}
\label{def:CD}
    Let $G=(V,E)$ be a graph. We say that $G$ satisfies the Bakry--\'Emery curvature-dimension condition $\CD(K,\infty)$ at a vertex $x\in V$ if
    \[
  \Gamma_2(f)(x)\ge K\Gamma(f)(x)
\]
holds for any function $f:V\to\R$. The $\infty$-Bakry--\'Emery curvature $\mathcal{K}_{\infty}(x)$ at $x\in V$ is defined as
\[\mathcal{K}_{\infty}(x)=\sup\{K: G \text{ satisfies }\CD(K,\infty)\ \text{at}\ x\}.\]
We say $G$ satisfies $\CD(K,\infty)$ if it satisfies $\CD(K,\infty)$ at every $x\in V$.
\end{definition}

We recall two basic consequences of positive Bakry--\'Emery curvature: the
Lichnerowicz eigenvalue bound and the Myers diameter bound.

\begin{theorem}[Lichnerowicz bound, {\cite[Proposition~1.3]{LMP}}]
\label{thm:LichnerowiczEstimate}
     Assume $G$ satisfies $\CD(K,\infty)$ with $K>0$. Let $D=\max_{v\in V(G)}\deg(v)<\infty$. Then
     \begin{equation}
         \label{eq:LichnerowiczEstimate}
         \lambda_1\geq K.
     \end{equation}
\end{theorem}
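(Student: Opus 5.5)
The standard route is the Bochner-type argument: integrate the curvature-dimension inequality against an eigenfunction and use self-adjointness of $L$. Let $f$ be an eigenfunction of $-L$ with eigenvalue $\lambda_1$, so $Lf=-\lambda_1 f$ and $f\not\equiv 0$. Since $\lambda_1>0$ and $\sum_x Lf(x)=0$, we have $\sum_x f(x)=0$. Because $G$ is connected and $f$ is nonconstant, $f$ is not constant on the edges of $G$, and hence
\[
\sum_x \Gamma(f)(x)=\tfrac12\sum_x\sum_{y\sim x}(f(y)-f(x))^2=-\sum_x f(x)Lf(x)=\lambda_1\sum_x f(x)^2>0 .
\]

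The first step is to sum $\Gamma_2(f)$ over $V$. By \eqref{eq:Gamma2-def},
\[
\Gamma_2(f)=\tfrac12 L\Gamma(f)-\Gamma(f,Lf).
\]
The term $\sum_x L\Gamma(f)(x)$ vanishes, since $\sum_x Lh(x)=0$ for every finite $h$ (each oriented edge contributes with both signs). For the remaining term we have $\Gamma(f,Lf)=-\lambda_1\Gamma(f)$ pointwise, by bilinearity. Therefore
\[
\sum_x\Gamma_2(f)(x)=\lambda_1\sum_x\Gamma(f)(x).
\]

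The second step applies $\CD(K,\infty)$ pointwise and sums over $V$:
\[
\lambda_1\sum_x\Gamma(f)\;=\;\sum_x\Gamma_2(f)\;\ge\; K\sum_x\Gamma(f).
\]
Dividing by the positive quantity $\sum_x\Gamma(f)$ gives $\lambda_1\ge K$, which is \eqref{eq:LichnerowiczEstimate}.

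The only point needing care is that $\sum_x \Gamma(f)>0$, and this follows from connectedness together with the summation-by-parts identity above. The hypothesis that $D$ is finite is automatic for finite graphs and is used only to ensure all sums are finite. No step poses a real obstacle; the computation is routine once the two summation identities, $\sum_x Lh=0$ and $\sum_x\Gamma(f)=-\sum_x fLf$, are in hand.
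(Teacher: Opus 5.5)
Your proof is correct and complete for the finite connected graphs considered in this paper. Summing $\Gamma_2(f)\ge K\Gamma(f)$ over $V$ for a $\lambda_1$-eigenfunction, discarding $\sum_x L\Gamma(f)(x)=0$, and using $\Gamma(f,Lf)=-\lambda_1\Gamma(f)$ together with $\sum_x\Gamma(f)(x)=\lambda_1\|f\|^2>0$ is the standard integrated Bochner argument.

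The paper does not prove this statement itself; it simply quotes \cite[Proposition~1.3]{LMP}. The hypothesis $D<\infty$ presumably matters there because graphs are not assumed finite in advance. Here all graphs are finite, so you are right that it plays no role.
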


\begin{theorem}[{\cite[Proposition~1.3 and Theorem~1.4]{LMP}}]
\label{thm:MyersBound}
    Assume $G$ satisfies $\CD(K,\infty)$ with $K>0$. Let $D=\max_{v\in V(G)}\deg(v)<\infty$. Let $\diam(G)$ be the diameter of $G$. Then $G$ satisfies the Myers diameter bound
    \begin{equation}
    \label{eq:MyersBound}
        \diam(G)\leq\frac{2D}{K}.
    \end{equation}
    The equality holds if and only if $G$ is $H_D$.
\end{theorem}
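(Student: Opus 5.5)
The plan is the semigroup (heat-flow) proof of the Myers bound. For the equality case, I would show that the extremal distance functions are \(K\)-eigenfunctions and then appeal to \Cref{thm:LMP-rigidity}.

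\textbf{Inequality.} Let \(P_t=e^{tL}\). The standard consequence of \(\CD(K,\infty)\) that I would invoke is the gradient estimate
\[
\sqrt{\Gamma(P_tf)}\leq e^{-Kt}P_t\sqrt{\Gamma(f)}.
\]
By Cauchy--Schwarz, \(|Lg(x)|\leq\sqrt{2\deg(x)\,\Gamma(g)(x)}\leq\sqrt{2D\,\Gamma(g)(x)}\). Fix \(x,y\) and put \(f=d_G(x,\cdot)\). Then \(f\) is \(1\)-Lipschitz, so \(\Gamma(f)\leq D/2\). Combining these,
\[
|P_tf(z)-f(z)|\leq\int_0^t|LP_sf(z)|\,ds\leq\int_0^t\sqrt{2D}\,e^{-Ks}\sqrt{D/2}\,ds=\frac{D}{K}\bigl(1-e^{-Kt}\bigr).
\]
Since \(G\) is finite and connected, \(P_tf\) converges to the constant \(\frac1{|V|}\sum_V f\). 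Hence \(P_tf(x)-P_tf(y)\to0\) as \(t\to\infty\). Writing \(f(y)-f(x)\) as \([f(y)-P_tf(y)]+[P_tf(y)-P_tf(x)]+[P_tf(x)-f(x)]\) and letting \(t\to\infty\) gives \(d_G(x,y)\leq 2D/K\).

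\textbf{Equality.} Choose \(x,y\) with \(d_G(x,y)=2D/K\). Then every inequality above is an equality for all \(s\geq0\) at both \(x\) and \(y\). In particular:
\begin{itemize}
\item \(\deg(x)=\deg(y)=D\);
\item \(\Gamma(f)=D/2\) along the relevant support, i.e.\ \(f\) changes by exactly \(\pm1\) on every edge there;
\item \(|LP_sf(x)|=De^{-Ks}\) with a fixed sign.
\end{itemize}
Integrating the last identity gives \(P_sf(x)-\bar f=(f(x)-\bar f)e^{-Ks}\), and similarly at \(y\). Expanding \(f-\bar f\) in an eigenbasis of \(-L\), every eigenvalue is at least \(K\) by \Cref{thm:LichnerowiczEstimate}. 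The exact rate \(e^{-Ks}\) at \(x\) and \(y\), together with the sign constraints, should force the components of \(f-\bar f\) outside \(E_K\) to vanish. Hence \(f-\bar f\in E_K\), and in particular \(\lambda_1=K\).

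To reach \(\lambda_D=K\), I would produce \(D\) linearly independent \(K\)-eigenfunctions. The neighbours \(z_1,\dots,z_D\) of \(x\) all lie at distance \(1\) closer to \(y\). I would show that the pair \((x,y)\) can be propagated: each neighbour \(z_i\) of \(x\) has its own antipode \(y_i\) at distance \(2D/K\), so the argument above applies to \(d_G(z_i,\cdot)\). Differences of these distance functions then give \(D\) eigenfunctions. Their gradients at \(x\) are linearly independent because they are distinguished by which edge at \(x\) they decrease along. With \(\lambda_D=K\) in hand, \Cref{thm:LMP-rigidity} yields \(G\cong H_D\) and \(K=2\). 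The converse is the direct check that \(H_D\) has \(K=2\) and diameter \(D=2D/K\).

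\textbf{Main obstacle.} I expect the multiplicity step to be hardest: showing that equality at one extremal pair propagates to \(D\) independent eigenfunctions. I would first try to prove that every vertex has eccentricity \(2D/K\). The route is via the identity \(f-\bar f\in E_K\) with \(\Gamma(f)\equiv D/2\). This would make \(f\) an eigenfunction that strictly increases along every edge away from \(x\), and symmetry of the argument under swapping the roles of \(x\) and \(y\) should then give antipodes everywhere.
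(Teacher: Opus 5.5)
The paper does not prove this statement; it is quoted from \cite{LMP}. Your inequality argument is the standard semigroup proof, but the gradient estimate needs fixing. The square-root form $\sqrt{\Gamma(P_tf)}\le e^{-Kt}P_t\sqrt{\Gamma(f)}$ is usually derived via the diffusion chain rule, which fails on graphs. What $\CD(K,\infty)$ gives is $\Gamma(P_tf)\le e^{-2Kt}P_t\Gamma(f)$, obtained by differentiating $r\mapsto e^{-2Kr}P_r\Gamma(P_{t-r}f)$. This is enough here: $P_t\Gamma(f)\le D/2$ gives $\sqrt{\Gamma(P_tf)}\le e^{-Kt}\sqrt{D/2}$, and hence the same bound $2D/K$.

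The first gap in the equality case is the claim $f-\bar f\in E_K$. Matching the rate $e^{-Ks}$ at $x$ and $y$ only shows that the spectral components of $f-\bar f$ outside $E_K$ vanish at those points, not identically. The same holds at their neighbours, via equality in Cauchy--Schwarz. What does work is the $L^2$ estimate above:
\begin{itemize}
\item Equality in it forces $(\Gamma_2-K\Gamma)(P_uf)\equiv0$ on all of $V$ for every $u>0$, because the heat kernel is strictly positive.
\item Summing over $V$ gives $\|LP_uf\|^2=K\langle P_uf,-LP_uf\rangle$. Since every nonzero eigenvalue is at least $K$, this forces $P_uf-\bar f\in E_K$, hence $f-\bar f\in E_K$.
\item Equality in $P_s\Gamma(f)\le D/2$ gives $\Gamma(f)\equiv D/2$. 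Evaluating $Lf=-K(f-\bar f)$ at a neighbour of $x$ then yields $K=2$. Moreover, every $v\in S_k(x)$ has exactly $k$ neighbours in $S_{k-1}(x)$ and $D-k$ in $S_{k+1}(x)$.
\end{itemize}

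The second gap, the propagation step, is the decisive one, and your proposed route to antipodes everywhere cannot work as stated. You intend to use $f-\bar f\in E_K$, $\Gamma(f)\equiv D/2$, the symmetry $x\leftrightarrow y$, and the resulting layer structure. All of this holds in the graph $H_4^\star$ of \Cref{ex:four-dimensional-star-twist}, with $x=p$ and $y=q$. There $d(p,\cdot)-2$ is a $2$-eigenfunction with $\Gamma\equiv 2$, and $\diam=4=2D/K$. Yet $a_1$ has eccentricity $3$: its farthest vertices are $e_{23},e_{24},e_{34},q$. Swapping $x$ and $y$ only produces the second pole, not antipodes for other vertices.

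What rules out $H_4^\star$ is curvature at the remaining vertices; for instance, $\mathcal K_\infty(e_{12})=0$ there. So you must use $\CD(K,\infty)$ again, locally at vertices other than $x$ and $y$. The aim would be a combinatorial conclusion, for example that vertices at distance two have exactly two common neighbours, which the single eigenfunction $d(x,\cdot)-\bar f$ cannot supply. Until that step is provided, $\lambda_D=K$ is not established and \Cref{thm:LMP-rigidity} cannot be invoked. This step is where the actual content of the rigidity statement lies.
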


A key tool for computing the Bakry--\'Emery curvature at a vertex is the so-called curvature matrix. For each vertex $x$ of a graph $G$, one can associate a symmetric $\deg(x)\times \deg(x)$ matrix $A_\infty(x)$, whose entries are determined by the local structure of the ball $B_2(x)$, such that
\begin{equation}
\mathcal{K}_\infty(x)=\lambda_{\min}\bigl(A_\infty(x)\bigr),
\end{equation}
where $\lambda_{\min}\bigl(A_\infty(x)\bigr)$ denotes the smallest eigenvalue of $A_\infty(x)$. This matrix can be obtained from the matrices representing $\Gamma_2$ and $\Gamma$ by taking a Schur complement, see \cite[Section 2]{CKLP}. Below we recall the definition of the curvature matrix $A_\infty(x)$, in a slightly reformulated form of \cite[Proposition 1.13(i)]{CKLP}.

\begin{definition}[Curvature matrices]\label{def:curMatrix}
    Let $G$ be a graph. For any vertex $x$, we label
\[
   S_1(x)=\{y_1,\ldots,y_{\mathrm{deg}(x)}\}.
\]
For $i\neq j$, set
\[
  \varepsilon_{ij}:=\begin{cases}1,&y_i\sim y_j,\\0,&y_i\not\sim y_j,
  \end{cases}
\]
and
\[
  \omega_{ij}:=\sum_{\substack{z\in S_2(x)\\ z\sim y_i,\ z\sim y_j}}
      \frac1{d_x^-(z)}\ \ \text{with}
  \ \ d_x^-(z):=|S_1(x)\cap S_1(z)|.
\]
Let
\begin{equation}
\label{eq:tandOmega}
 t_i:=\sum_{j\neq i}\varepsilon_{ij}\qquad \text{and} \qquad\Omega_i:=\sum_{j\neq i}\omega_{ij}.
\end{equation}
Then we have
\begin{align}
\label{eq:A-offdiag}
      (A_\infty(x))_{ij}&=1-2\varepsilon_{ij}-2\omega_{ij}
  \ \ \text{for}\ \ i\neq j,
\\
\label{eq:CKLP-diag}
       (A_\infty(x))_{ii}&=3-\frac{\mathrm{deg}(x)+\mathrm{deg}(y_i)}{2}+\frac52 t_i+2\Omega_i\\
\end{align}
\end{definition}
By \eqref{eq:A-offdiag} and \eqref{eq:CKLP-diag}, we derive that
\begin{equation}
    \label{eq:A-row-sum}
      (A_\infty(x)\one)_i=2+\frac12 t_i+\frac{\mathrm{deg}(x)-\mathrm{deg}(y_i)}{2},
\end{equation}
where $\one$ is the $\deg(x)$-dimensional all-$1$ vector.

\begin{example}\label{ex:hypercube_cur_matrix}
    The hypercube graph $H_d$ has $\deg(x)=\deg(y_i)=d$, $\varepsilon_{ij}=0$ and $\omega_{ij}=1/2$ for every
$i\neq j$, hence \[A_\infty(x)=2I_d,\]
at any vertex $x$.
\end{example}

\begin{definition}[{\cite[Definition 1]{LL2024}}]
\label{def:GraphBundle}
    Let $G$ and $F$ be two unweighted graphs. Let $E^{\mathrm{ori}}_{G}=\{(x,y): x\sim y \text{ in } G\}$ be the set of oriented edges of $G$. Let $\sigma: E^{\mathrm{ori}}_G\to \operatorname{Aut}(F), (x,y)\to \sigma_{xy}$ such that $\sigma_{yx}=\sigma_{xy}^{-1}$. Construct the graph $G\square_\sigma F$ by setting $V(G\square_\sigma F)=V(G)\times V(F)$ and $E(G\square_\sigma F)$
    \begin{enumerate}
        \item[(\romannumeral1)] $(x,u)\sim (y,\sigma_{xy}(u))$ if $x\sim y$ in $G$;
        \item[(\romannumeral2)] $(x,u)\sim (x,v)$ if $u\sim v$ in $F$.
    \end{enumerate}
    We call $G\square_\sigma F$ the $F$-bundle over $G$ and refer to $G$ and $F$ as the base graph and the fiber graph, respectively.
\end{definition}
\begin{definition}
    Let $G\square_\sigma F$ be a $F$-bundle over $G$. A section of $G\square_\sigma F$ is a graph homomorphism $s: G\to G\square_\sigma F$ such that $\pi_1\circ s: V(G)\to V(G)$ is the identity map where $\pi_1$ is the projection to the first factor $V(G\square_\sigma F)\to V(G), (x,u)\to x$.
\end{definition}

\section{Curvature formulas}

We call a graph $G$ Lichnerowicz-sharp if it satisfies $\CD(K,\infty)$ and $\lambda_1(G)=K$ for some $K>0$. 
In this section, we give some fundamental equations for Lichnerowicz-sharp graphs.  Lemma~\ref{lem:defect-nullity} and Lemma~\ref{lem:midpoint} can be found in \cite{LMP}. 

Unless otherwise stated, we shall always assume that $G$ is a connected Lichnerowicz-sharp graph.

\begin{lemma}[{\cite[Theorem~2.1 and Section~3.1]{LMP}}]
\label{lem:defect-nullity}
For every $a\in E_K$, every vertex $s$, and every function $\psi:V\to\R$, one has
\begin{equation}\label{eq:defect-bilinear-zero}
 \Gamma_2(a,\psi)(s)-K\Gamma(a,\psi)(s)=0.
\end{equation}
Moreover, $\Gamma(a,b)$ is constant for all $a,b\in E_K$.
\end{lemma}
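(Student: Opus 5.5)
The plan is the standard Bochner-type integration argument. Let $a\in E_K$, so $La=-Ka$. Write $\langle u,v\rangle=\sum_{x}u(x)v(x)$ and use the integration by parts formula $\sum_x \Gamma(f,g)(x)=-\langle f,Lg\rangle$, together with $\sum_x L h(x)=0$. Summing the definition \eqref{eq:Gamma2-def} over all vertices then gives
\[
\sum_x\Gamma_2(a)(x)=-\sum_x\Gamma(a,La)(x)=\langle La,La\rangle=K^2\|a\|^2,
\]
\[
\sum_x K\Gamma(a)(x)=K\langle a,-La\rangle=K^2\|a\|^2 .
\]
Hence the quadratic defect $Q(f)(x):=\Gamma_2(f)(x)-K\Gamma(f)(x)$ is pointwise nonnegative by $\CD(K,\infty)$, while $\sum_x Q(a)(x)=0$. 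It follows that $Q(a)(s)=0$ at every vertex $s$.

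Next I polarize. At each fixed $s$, the map $f\mapsto Q(f)(s)$ is a positive semidefinite quadratic form in $f$, and its associated bilinear form is $\Gamma_2(f,\psi)(s)-K\Gamma(f,\psi)(s)$. For a positive semidefinite form $B$, the condition $B(a,a)=0$ implies $B(a,\psi)=0$ for every $\psi$. This follows from Cauchy--Schwarz, or from expanding $B(a+t\psi,a+t\psi)\ge0$ for all real $t$. This yields \eqref{eq:defect-bilinear-zero}.

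For constancy, take $a,b\in E_K$ and expand
\[
\Gamma_2(a,b)=\tfrac12\bigl(L\Gamma(a,b)-\Gamma(b,La)-\Gamma(a,Lb)\bigr)=\tfrac12 L\Gamma(a,b)+K\Gamma(a,b).
\]
Applying \eqref{eq:defect-bilinear-zero} with $\psi=b$ gives $\Gamma_2(a,b)=K\Gamma(a,b)$, and therefore $L\Gamma(a,b)=0$ everywhere. Because $G$ is finite and connected, harmonic functions are constant: at a maximum point $x$ of $\Gamma(a,b)$, the vanishing of $L\Gamma(a,b)(x)$ forces all neighbours of $x$ to attain the same maximum, and this propagates across the whole graph. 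So $\Gamma(a,b)$ is constant.

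The main point needing care is the summation identity for $\Gamma_2$, in particular that $\sum_x L\Gamma(a)=0$ and $\sum_x\Gamma(a,La)=-\langle La,La\rangle$. Both are routine consequences of the symmetry of $L$ on an unweighted graph. Everything else is formal.
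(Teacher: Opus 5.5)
Your proposal is correct and follows essentially the same route as the cited result of \cite{LMP}; the paper itself gives no proof. The route is to sum the pointwise nonnegative defect $\Gamma_2(a)-K\Gamma(a)$ over $V$ to get zero, polarize using positive semidefiniteness at each vertex, and conclude from $\Gamma_2(a,b)=\tfrac12 L\Gamma(a,b)+K\Gamma(a,b)$ that $\Gamma(a,b)$ is harmonic, hence constant on the connected graph.
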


The following two formulas play a fundamental role in the proofs of our structural theorems. Let $\one_{\{x\}}$ be the indicator function of $\{x\}$, i.e., $\one_{\{x\}}(x)=1$ and $\one_{\{x\}}(y)=0$ for all $y\neq x$. The midpoint formula below is contained in \cite[Theorem~3.4]{LMP}. For consistency with the proof of Lemma~\ref{lem:adjacent-edge}, we include a direct computational proof.
\begin{lemma}
\label{lem:midpoint}
If $d_G(x,y)=2$, then for every $a\in E_K$,
\begin{equation}\label{eq:midpoint-scalar}
 \frac{a(x)+a(y)}2
 =\frac1{|S_1(x)\cap S_1(y)|}
   \sum_{\substack{z\sim x,\\z\sim y}}a(z).
\end{equation}
\end{lemma}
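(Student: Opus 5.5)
The idea is to test the vanishing defect identity \eqref{eq:defect-bilinear-zero} of Lemma~\ref{lem:defect-nullity} against the indicator $\psi=\one_{\{y\}}$ at the base point $s=x$. Since $d_G(x,y)=2$, the function $\psi$ vanishes on $B_1(x)$. Every term in $\Gamma(a,\psi)(x)$ involves $\psi$ only on $B_1(x)$, so $\Gamma(a,\psi)(x)=0$. The identity therefore reduces to $\Gamma_2(a,\psi)(x)=0$, and the task is to compute this quantity explicitly.

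By definition,
\[
2\Gamma_2(a,\psi)(x)=L\Gamma(a,\psi)(x)-\Gamma(\psi,La)(x)-\Gamma(a,L\psi)(x).
\]
We treat the three terms in turn.
\begin{enumerate}
\item Since $La=-Ka$ and $\psi$ vanishes on $B_1(x)$, we get $\Gamma(\psi,La)(x)=0$.
\item For the first term, $\Gamma(a,\psi)(x)=0$, so $L\Gamma(a,\psi)(x)=\sum_{z\sim x}\Gamma(a,\psi)(z)$. For a neighbor $z$ of $x$, only $w=y$ contributes to $\Gamma(a,\psi)(z)$, and only when $z\sim y$. This gives $\tfrac12(a(y)-a(z))$ for each common neighbor $z$ of $x$ and $y$.
\item For the last term, $L\psi$ equals $\one$ on $S_1(y)$, equals $-\deg(y)$ at $y$, and is $0$ elsewhere. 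In particular $L\psi(x)=1$, because $x\in S_1(y)$. For $z\sim x$, we have $L\psi(z)=1$ exactly when $z\sim y$, and $0$ otherwise. Hence
\[
\Gamma(a,L\psi)(x)=\tfrac12\sum_{z\sim x}(a(z)-a(x))\bigl(L\psi(z)-1\bigr)=-\tfrac12\sum_{z\sim x,\,z\not\sim y}(a(z)-a(x)).
\]
\end{enumerate}

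Combining the three terms and writing $N=S_1(x)\cap S_1(y)$, we obtain
\[
0=\tfrac12\sum_{z\in N}(a(y)-a(z))+\tfrac12\sum_{z\sim x,\,z\notin N}(a(z)-a(x)).
\]
The second sum equals $\sum_{z\sim x}(a(z)-a(x))-\sum_{z\in N}(a(z)-a(x))$. Its first part is $La(x)=-Ka(x)$. Substituting gives
\[
|N|\bigl(a(x)+a(y)\bigr)-2\sum_{z\in N}a(z)-Ka(x)=0.
\]

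This is not yet symmetric in $x$ and $y$, and it carries an extra term $-Ka(x)$. That signals a sign or bookkeeping slip in the $\Gamma(a,L\psi)$ term. Recomputing carefully is the main obstacle. Two checks will guide the correction: the expected answer is symmetric, and repeating the computation with $s=y$, $\psi=\one_{\{x\}}$ must give the mirror identity. Subtracting the two versions eliminates unwanted terms of the form $Ka$ if any survive.

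After the sign is fixed, the identity should reduce exactly to $|N|\,(a(x)+a(y))=2\sum_{z\in N}a(z)$. Since $d_G(x,y)=2$, the set $N$ is nonempty, so dividing by $2|N|$ gives \eqref{eq:midpoint-scalar}.
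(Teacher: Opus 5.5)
Your approach is the paper's: the same test function $\psi=\one_{\{y\}}$ and the same base point $s=x$. Your terms (i) and (ii) are correct. However, step (iii) contains a concrete error, and the proof is left unfinished.

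You claim $L\psi(x)=1$ ``because $x\in S_1(y)$''. The hypothesis $d_G(x,y)=2$ says exactly that $x\notin S_1(y)$ and $x\neq y$, so $L\psi(x)=0$. Every neighbour $z$ of $x$ differs from $y$, so $L\psi(z)$ equals $1$ if $z\sim y$ and $0$ otherwise. Therefore
\[
\Gamma(a,L\psi)(x)=\tfrac12\sum_{z\in N}\bigl(a(z)-a(x)\bigr).
\]
Combining this with your (i) and (ii) gives
\[
0=2\Gamma_2(a,\psi)(x)=\tfrac12\sum_{z\in N}\bigl(a(x)+a(y)-2a(z)\bigr),
\]
which is \eqref{eq:midpoint-scalar} after dividing by $|N|>0$. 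There is no $Ka(x)$ term, and no symmetrization is needed. This is precisely the paper's computation.

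The repair you sketch would not work. The identity you derived,
\[
|N|\bigl(a(x)+a(y)\bigr)-2\sum_{z\in N}a(z)-Ka(x)=0,
\]
is false. Take the $4$-cycle $H_2$ with $x=(0,0)$, $y=(1,1)$ and the eigenfunction $a(u)=(-1)^{u_1}$. Then the left side equals $-Ka(x)=-2\neq0$. Subtracting the mirror version cancels exactly the symmetric part you want to keep. It leaves $K\bigl(a(x)-a(y)\bigr)=0$, which also fails in this example. No algebraic combination of a false identity can legitimately produce the lemma.

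Your last paragraph only asserts what the identity ``should'' reduce to once an unidentified slip is fixed, so the decisive computation is never carried out. The missing step is simply the correct value $L\psi(x)=0$.
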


\begin{proof}
Fix $a\in E_K$ and put $\psi=\mathbf{1}_{\{y\}}$.  Since $d_G(x,y)=2$,
$\Gamma(a,\psi)(x)=0$.  A direct calculation gives
\begin{align*}
 L\Gamma(a,\psi)(x)
 &=\frac12\sum_{\substack{z\sim x,\\z\sim y}}\bigl(a(y)-a(z)\bigr),\\
 \Gamma(a,L\psi)(x)
 &=\frac12\sum_{\substack{z\sim x,\\z\sim y}}\bigl(a(z)-a(x)\bigr).
\end{align*}
Since \(La=-Ka\) and \(\Gamma(a,\psi)(x)=0\), we also have
\(\Gamma(\psi,La)(x)=-K\Gamma(\psi,a)(x)=0\). Hence
\eqref{eq:defect-bilinear-zero} reduces to
\[
L\Gamma(a,\psi)(x)-\Gamma(a,L\psi)(x)=0.
\]
Substitution yields
\[
 \sum_{\substack{z\sim x,\\z\sim y}}
 \bigl(a(x)+a(y)-2a(z)\bigr)=0,
\]
which is \eqref{eq:midpoint-scalar}.
\end{proof}

\begin{lemma}
\label{lem:adjacent-edge}
Let $xy\in E(G)$ and put $c_{xy}=|S_1(x)\cap S_1(y)|$. Write
$d_x=\deg_G(x)$ and $d_y=\deg_G(y)$.  Then every $a\in E_K$ satisfies
\begin{equation}\label{eq:adjacent-edge}
 (d_x-d_y+c_{xy})a(x)
 +(-d_x+d_y+c_{xy})a(y)
 =2\sum_{\substack{z\sim x,\\z\sim y}}a(z).
\end{equation}
In particular, if $a(x)=a(y)$ and $S_1(x)\cap S_1(y)\ne\varnothing$, then
\begin{equation}\label{eq:collapsed-average}
 a(x)=a(y)
 =\frac1{|S_1(x)\cap S_1(y)|}\sum_{\substack{z\sim x,\\z\sim y}}a(z).
\end{equation}
\end{lemma}

\begin{proof}
Fix $a\in E_K$ and again put $\psi=\mathbf{1}_{\{y\}}$.  A direct computation using $La=-Ka$ gives
\begin{align}
\label{eq:LGamma}
 L\Gamma(a,\psi)(x)
 &=\frac12\left[
 (K+c_{xy}-d_x)a(y)+d_xa(x)-\sum_{\substack{z\sim x,\\z\sim y}}a(z)
 \right],\\
 \label{eq:GammaL}
 \Gamma(a,L\psi)(x)
 &=\frac12\left[
 -d_ya(y)+(d_y-c_{xy}+K)a(x)+\sum_{\substack{z\sim x,\\z\sim y}}a(z)
 \right],\\
 \label{eq:GammaAPsi}
 \Gamma(a,\psi)(x)&=\frac{1}{2}(a(y)-a(x)).
\end{align}
Since $La=-Ka$, equation \eqref{eq:defect-bilinear-zero} is
\[
 L\Gamma(a,\psi)(x)-\Gamma(a,L\psi)(x)
 -K\Gamma(a,\psi)(x)=0.
\]
Substituting \eqref{eq:LGamma}, \eqref{eq:GammaL} and \eqref{eq:GammaAPsi} and collecting coefficients gives
\eqref{eq:adjacent-edge}.  If $a(x)=a(y)$, division by $c_{xy}>0$ gives \eqref{eq:collapsed-average}.
\end{proof}

By \Cref{thm:first-sharp-K2}, we shall always assume \(K=2\) in the rest of this paper.

\section{Spectral embedding and generic 2-eigenfunction}
\subsection{Spectral embedding}
Let $m=m_2(G)$. Choose an orthonormal basis $\phi_1,\ldots,\phi_m$ of $\Etwo$ in
$\ell^2(V)$, and define the spectral embedding
\begin{equation*}
 \Phi(x)=\bigl(\phi_1(x),\ldots,\phi_m(x)\bigr)\in\mathbb{R}^{m}.
\end{equation*}
An edge $xy$ is \emph{visible} if $\Phi(x)\ne\Phi(y)$ and
\emph{collapsed} otherwise.  Define the following subgraph
\begin{equation*}
    G_{\vis}=\bigl(V(G),\{xy\in E(G): xy\text{ is visible}\}\bigr).
\end{equation*}

For any vertex $s\in V(G)$, define the following second and third-order tensors
\begin{align}
 Q(s)&=\sum_{u\sim s}\label{eq:SecondOrder}
 (\Phi(u)-\Phi(s))^{\otimes2},
\\
 T(s)&=\sum_{u\sim s}(\Phi(u)-\Phi(s))^{\otimes3}.\label{eq:ThirdOrder}
\end{align}

\begin{lemma}
\label{lem:frame-constant}
$Q(s)$ is independent of $s$.  Moreover,
\begin{equation}\label{eq:frame-global}
 Q(s)=\frac4{|V|}I_m.
\end{equation}
\end{lemma}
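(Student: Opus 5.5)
Observe first that the $(i,j)$ entry of $Q(s)$ is
\[
 Q(s)_{ij}=\sum_{u\sim s}\bigl(\phi_i(u)-\phi_i(s)\bigr)\bigl(\phi_j(u)-\phi_j(s)\bigr)=2\,\Gamma(\phi_i,\phi_j)(s).
\]
By \Cref{lem:defect-nullity}, $\Gamma(a,b)$ is constant on $V$ for all $a,b\in\Etwo$. Hence each entry $Q(s)_{ij}$ is a constant $c_{ij}$ independent of $s$, which proves the first assertion.

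To identify the constant, sum over vertices. For any $f,g$, the standard summation-by-parts identity gives
\[
 \sum_{s\in V}\Gamma(f,g)(s)=\frac12\sum_{s}\sum_{u\sim s}\bigl(f(u)-f(s)\bigr)\bigl(g(u)-g(s)\bigr)=\sum_{xy\in E}\bigl(f(x)-f(y)\bigr)\bigl(g(x)-g(y)\bigr)=\langle -Lf,g\rangle_{\ell^2(V)} .
\]
Apply this with $f=\phi_i$, $g=\phi_j$. Since $-L\phi_i=2\phi_i$ (recall $K=2$ by \Cref{thm:first-sharp-K2}) and the basis is orthonormal, we get $\sum_s\Gamma(\phi_i,\phi_j)(s)=2\delta_{ij}$. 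Constancy then gives $|V|\,\Gamma(\phi_i,\phi_j)\equiv 2\delta_{ij}$, so
\[
 Q(s)_{ij}=2\Gamma(\phi_i,\phi_j)(s)=\frac{4}{|V|}\delta_{ij},
\]
which is \eqref{eq:frame-global}.

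There is no real obstacle here; the only points needing care are the factor $\tfrac12$ in the definition of $\Gamma$ and the fact that each edge is counted twice in the double sum.
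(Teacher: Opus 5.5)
Your proposal is correct and follows essentially the same route as the paper. Constancy comes from \Cref{lem:defect-nullity} via $Q(s)_{ij}=2\Gamma(\phi_i,\phi_j)(s)$, and the constant comes from summing over $V$ by parts and using orthonormality of the $\phi_i$. The only cosmetic difference is that the paper works with the bilinear form $\langle Q(s)u,w\rangle=2\Gamma(f_u,f_w)(s)$ for arbitrary $u,w\in\mathbb{R}^m$, where you work entrywise with coordinate vectors.
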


\begin{proof}
For $u,w\in\mathbb{R}^m$, let $f_u(s)=\langle u,\Phi(s)\rangle$.  By
\Cref{lem:defect-nullity}, $\Gamma(f_u,f_w)$ is constant.  Since
\[
 \langle Q(s)u,w\rangle=2\Gamma(f_u,f_w)(s),
\]
$Q(s)$ is constant.  Summing over $s$ and then over unoriented edges gives
\begin{align*}
 |V|\langle Q(s)u,w\rangle
 &=\sum_{z\in V}\sum_{r\sim z}
  (f_u(r)-f_u(z))(f_w(r)-f_w(z))\\
 &=-2\langle f_u,Lf_w\rangle
 =4\langle u,w\rangle,
\end{align*}
which proves \eqref{eq:frame-global}.
\end{proof}

\begin{lemma}[Cubic $2$-eigenfunction]
\label{lem:cubic-sharpness}
For $a,b,c\in\Etwo$, the function
\begin{equation*}
 T_3(a,b,c)(s)=\frac{1}{2}\sum_{u\sim s}
 (a(u)-a(s))(b(u)-b(s))(c(u)-c(s))
\end{equation*}
belongs to $\Etwo$.
\end{lemma}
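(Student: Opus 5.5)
The plan is to exploit \Cref{lem:defect-nullity} with a test function built from two eigenfunctions, namely $\psi=bc$, and then read off $T_3(a,b,c)$ from the product formula \eqref{eq:ProductFormulaOfGamma}.

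First, write $\gamma_{ab}$, $\gamma_{ac}$, $\gamma_{bc}$ for the constant values of $\Gamma(a,b)$, $\Gamma(a,c)$, $\Gamma(b,c)$; these are constants by \Cref{lem:defect-nullity}. Using the Leibniz rule $L(bc)=bLc+cLb+2\Gamma(b,c)$ together with $Lb=-2b$ and $Lc=-2c$, we get
\[
L(bc)=-4bc+2\gamma_{bc}.
\]
Since $\Gamma(a,\cdot)$ annihilates constants, this gives $\Gamma(a,L(bc))=-4\Gamma(a,bc)$.

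Next, apply \eqref{eq:defect-bilinear-zero} with $K=2$ and $\psi=bc$. Expanding $\Gamma_2(a,\psi)$ via \eqref{eq:Gamma2-def} and using $La=-2a$, so that $\Gamma(\psi,La)=-2\Gamma(a,\psi)$, the identity $\Gamma_2(a,\psi)=2\Gamma(a,\psi)$ becomes
\[
L\Gamma(a,\psi)-\Gamma(a,L\psi)=2\Gamma(a,\psi).
\]
Substituting $\Gamma(a,L\psi)=-4\Gamma(a,\psi)$ yields $L\Gamma(a,bc)=-2\Gamma(a,bc)$, so $\Gamma(a,bc)\in\Etwo$. The zero function is allowed here, since $\Etwo$ is a subspace.

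Finally, by \eqref{eq:ProductFormulaOfGamma},
\[
T_3(a,b,c)=\Gamma(a,bc)-\gamma_{ac}\,b-\gamma_{ab}\,c.
\]
Each term on the right lies in $\Etwo$, so $T_3(a,b,c)\in\Etwo$.

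There is no real obstacle in this argument. The only points needing care are the sign conventions in $\Gamma_2$ and checking that the constant term $2\gamma_{bc}$ in $L(bc)$ drops out under $\Gamma(a,\cdot)$.
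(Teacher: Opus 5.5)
Your proof is correct and follows essentially the same route as the paper. You apply \eqref{eq:defect-bilinear-zero} with a test function built from $bc$ to get $\Gamma(a,bc)\in\Etwo$, then conclude with the product formula \eqref{eq:ProductFormulaOfGamma}; the only cosmetic difference is that the paper subtracts the constant and uses $\psi_{b,c}=bc-\frac12\Gamma(b,c)$, a genuine $4$-eigenfunction, whereas you keep $\psi=bc$ and note that the constant $2\gamma_{bc}$ in $L(bc)$ is killed by $\Gamma(a,\cdot)$.
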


\begin{proof}
For $b,c\in\Etwo$, the constancy of $\Gamma(b,c)$ implies that
\begin{equation}\label{eq:psi-bc}
 \psi_{b,c}=bc-\frac12\Gamma(b,c)
 \qquad\text{satisfies}\qquad
 L\psi_{b,c}=-4\psi_{b,c}.
\end{equation}
By \eqref{eq:defect-bilinear-zero}, for $a\in\Etwo$ and any $\psi$,
\begin{equation}\label{eq:commutator}
 L\Gamma(a,\psi)-\Gamma(a,L\psi)=2\Gamma(a,\psi).
\end{equation}
Substituting \eqref{eq:psi-bc} into \eqref{eq:commutator} gives
$L\Gamma(a,\psi_{b,c})=-2\Gamma(a,\psi_{b,c})$.  Applying the product formula \eqref{eq:ProductFormulaOfGamma} to $a,b,c$ gives
\begin{equation*}
 \Gamma(a,\psi_{b,c})=\Gamma(a,bc)
 =b\Gamma(a,c)+c\Gamma(a,b)+T_3(a,b,c).
\end{equation*}
The first two terms on the right are constant multiples of $b$ and $c$, hence
belong to $\Etwo$.  Therefore $T_3(a,b,c)\in\Etwo$.
\end{proof}

Denote $T_{u_1u_2u_3}(s):=T(s)(u_1,u_2,u_3)$. Define the fourth-order tensor $\mathcal{A}$ of $\mathbb{R}^{m}$ by
\begin{equation}
    \label{eq:FourthOrderTensor}
    \mathcal{A}(u_0,u_1,u_2,u_3)=\langle u_0,\sum_{i=1}^{m}\langle T_{u_1u_2u_3},\phi_i \rangle e_i\rangle
\end{equation}
For any $u\in\mathbb{R}^{m}$, let $\mathcal{A}(u)$ denote the third-order tensor
\begin{equation*}
    \mathcal{A}(u)=\mathcal{A}(u,\cdot,\cdot,\cdot).
\end{equation*}

\begin{lemma}
\label{lem:A4-symmetry}
The tensor $\mathcal{A}$ has the following properties:
\begin{enumerate}
    \item\label{en:TensorA1} $\mathcal{A}(\Phi(s))=T(s)$;
    \item \label{en:TensorA2}$\mathcal{A}$ has the following formula: For $u_i\in\mathbb{R}^m,\ 0\leq i\leq 3$,
    \begin{equation}\label{eq:A4-edge}
 \cA(u_0,u_1,u_2,u_3)
 =-\sum_{s\sim z}
  \prod_{j=0}^3\bigl(f_{u_j}(s)-f_{u_j}(z)\bigr).
\end{equation}
In particular, $\mathcal{A}$ is symmetric in all four arguments.
\end{enumerate}
\end{lemma}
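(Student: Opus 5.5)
Both parts follow by unwinding the definition \eqref{eq:FourthOrderTensor} and using that $\phi_1,\dots,\phi_m$ is an orthonormal basis of $\Etwo$, together with \Cref{lem:cubic-sharpness}.

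For \ref{en:TensorA1}, fix $u_1,u_2,u_3$. By multilinearity of $T(s)$, $T_{u_1u_2u_3}(s)=\sum_{u\sim s}\prod_{j=1}^3\bigl(f_{u_j}(u)-f_{u_j}(s)\bigr)=2T_3(f_{u_1},f_{u_2},f_{u_3})(s)$. Since each $f_{u_j}\in\Etwo$, \Cref{lem:cubic-sharpness} gives $T_{u_1u_2u_3}\in\Etwo$. Hence its orthonormal expansion is
\[
T_{u_1u_2u_3}=\sum_i\langle T_{u_1u_2u_3},\phi_i\rangle\phi_i .
\]
Evaluating at $s$ gives
\[
T_{u_1u_2u_3}(s)=\Bigl\langle \Phi(s),\sum_i\langle T_{u_1u_2u_3},\phi_i\rangle e_i\Bigr\rangle=\cA(\Phi(s),u_1,u_2,u_3).
\]
This is exactly \ref{en:TensorA1}, and it is the only point where sharpness enters.

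For \ref{en:TensorA2}, write $u_0=\sum_i c_ie_i$, so that $f_{u_0}=\sum_i c_i\phi_i$. Then
\[
\cA(u_0,u_1,u_2,u_3)=\sum_i c_i\langle T_{u_1u_2u_3},\phi_i\rangle=\langle T_{u_1u_2u_3},f_{u_0}\rangle_{\ell^2}=\sum_{z}f_{u_0}(z)\sum_{s\sim z}\prod_{j=1}^3\bigl(f_{u_j}(s)-f_{u_j}(z)\bigr).
\]
This double sum runs over oriented edges $(z,s)$. Symmetrize it by pairing $(z,s)$ with $(s,z)$: the cubic product changes sign under the swap, so each unoriented edge contributes $\bigl(f_{u_0}(z)-f_{u_0}(s)\bigr)\prod_{j=1}^3\bigl(f_{u_j}(s)-f_{u_j}(z)\bigr)$. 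Equivalently, averaging over both orientations gives $-\tfrac12\sum_{(z,s)}\prod_{j=0}^3\bigl(f_{u_j}(s)-f_{u_j}(z)\bigr)$.

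The remaining step is to match the normalization of the sum $\sum_{s\sim z}$ in \eqref{eq:A4-edge}. If that sum is read over unordered edges, the expression above is exactly $-\sum_{s\sim z}\prod_{j=0}^3(f_{u_j}(s)-f_{u_j}(z))$, as stated. If it were read over ordered pairs, the formula would carry a factor $\tfrac12$ instead. I would settle this against the paper's edge-sum convention before proceeding. Symmetry of $\cA$ in all four arguments is then immediate, since the right-hand side is a symmetric product of four edge differences.
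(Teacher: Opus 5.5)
Your proposal is correct and follows the paper's proof essentially step for step. Part (i) comes from the orthonormal expansion of $T_{u_1u_2u_3}\in\Etwo$, which holds by \Cref{lem:cubic-sharpness}. Part (ii) comes from writing $\cA(u_0,u_1,u_2,u_3)=\langle T_{u_1u_2u_3},f_{u_0}\rangle$ and then symmetrizing over the two orientations of each edge.

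On the normalization you flagged: the paper's $\sum_{s\sim z}$ in \eqref{eq:A4-edge} is a sum over unoriented edges, exactly as your symmetrization produces, so no extra factor is needed. In any case, later arguments use only the sign and the symmetry of this formula.
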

\begin{proof}
For \ref{en:TensorA1}, we have
\begin{equation*}
\begin{aligned}
      \mathcal{A}(\Phi(s),u_1,u_2,u_3)&=\bigl\langle \Phi(s),\sum_{i=1}^{m}\langle T_{u_1u_2u_3},\phi_i \rangle e_i\bigr\rangle\\
      &=\sum_{i=1}^{m}\langle T_{u_1u_2u_3},\phi_i \rangle\phi_i(s)\\
      &=T_{u_1u_2u_3}(s).
\end{aligned}
\end{equation*}
The last equality comes from $T_{u_1u_2u_3}\in E_2$.

For \ref{en:TensorA2}, we have
\begin{equation*}
\begin{aligned}
      \mathcal{A}(u_0,u_1,u_2,u_3)&=\bigl\langle u_0,\sum_{i=1}^{m}\langle T_{u_1u_2u_3},\phi_i \rangle e_i\bigr\rangle\\
      &=\sum_{i=1}^{m}u_{0i}\langle T_{u_1u_2u_3},\phi_i \rangle \\
      &=\sum_{i=1}^{m}u_{0i}\sum_{s\in V}\phi_i(s)\sum_{z\sim s}\prod_{k=1}^3\langle u_k,\Phi(z)-\Phi(s)\rangle \\
      &=\sum_{s\in V}f_{u_0}(s)\sum_{z\sim s}\prod_{k=1}^{3}\langle u_k,\Phi(z)-\Phi(s)\rangle\\
      &=-\sum_{s\sim z}
  \prod_{k=0}^3\bigl(f_{u_k}(s)-f_{u_k}(z)\bigr).
\end{aligned}
\end{equation*}
\end{proof}

\subsection{Generic 2-eigenfunction}
\begin{lemma}[generic $2$-eigenfunction]
\label{lem:visible-generic}
There exists $f\in\Etwo$ such that
\begin{equation}\label{eq:visible-strict}
 f(x)\ne f(y)
 \qquad\text{for every visible edge }xy.
\end{equation}
\end{lemma}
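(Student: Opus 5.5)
The plan is a standard genericity argument: choose $f=f_u=\langle u,\Phi(\cdot)\rangle$ for a suitable vector $u\in\mathbb{R}^m$. Since $\phi_1,\ldots,\phi_m$ is a basis of $\Etwo$, every such $f_u$ lies in $\Etwo$, so it suffices to find one $u$ that works for all visible edges at once.

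For a visible edge $xy$ put $v_{xy}=\Phi(x)-\Phi(y)$. By the definition of visibility, $v_{xy}\neq 0$. The condition $f_u(x)=f_u(y)$ is equivalent to $\langle u,v_{xy}\rangle=0$, so the bad set for this edge is the hyperplane
\[
 v_{xy}^{\perp}=\{u\in\mathbb{R}^m:\langle u,v_{xy}\rangle=0\},
\]
which is a proper linear subspace of $\mathbb{R}^m$. The graph is finite, so there are only finitely many visible edges and hence finitely many such hyperplanes.

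A finite union of proper subspaces of $\mathbb{R}^m$ cannot cover $\mathbb{R}^m$: each has Lebesgue measure zero, or alternatively one can argue that a vector space over an infinite field is not a finite union of proper subspaces. Pick any $u$ outside $\bigcup_{xy\text{ visible}}v_{xy}^{\perp}$. Then $f=f_u$ satisfies \eqref{eq:visible-strict}.

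Two degenerate cases need a remark. If there are no visible edges, any $f\in\Etwo$ works. Moreover $m\geq 1$ always holds, because $\lambda_1=2$ by \Cref{thm:first-sharp-K2}, so $\Etwo\neq 0$ and the construction makes sense.

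I expect no real obstacle. The only point to check is that visibility does not depend on the chosen basis, which holds because changing orthonormal bases applies an invertible linear map to $\Phi$. In fact the argument works for any basis, so that check is not even needed here.
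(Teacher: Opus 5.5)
Your proof is correct and is essentially the paper's argument. The paper works directly with the nonzero linear functionals $\ell_{xy}(f)=f(x)-f(y)$ on $E_2$ and observes that finitely many proper hyperplanes cannot cover $E_2$; your version is the same argument written in the coordinates $u\mapsto f_u=\langle u,\Phi(\cdot)\rangle$.
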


\begin{proof}
For each visible edge $xy$, the linear functional
$\ell_{xy}(f)=f(x)-f(y)$ on $\Etwo$ is nonzero.  Its kernel is therefore a
proper hyperplane.  A finite union of proper linear hyperplanes cannot cover
$\Etwo$, so one may choose $f$ outside their union.
\end{proof}

Let $f\in\Etwo$ satisfy \eqref{eq:visible-strict}.  For $t\in\R$, define
\begin{equation*}
 H_{>t}:=\Gvis\bigl[\{x\in V(G):f(x)>t\}\bigr].
\end{equation*}
Thus $H_{>t}$ contains precisely the visible edges whose two endpoints have
$f$-value strictly larger than $t$.

\begin{lemma}
\label{lem:four-cycle-exclusion}
Let $f\in\Etwo$ satisfy \eqref{eq:visible-strict}. Assume there exist two vertices $u,v$ lying in different components of $H_{>t}$, such that $uv$ is a collapsed edge and $f(u)=f(v)>t$. Then there exists no visible path $uabv$ with $f(a)=t$ and $f(b)>t$.
\end{lemma}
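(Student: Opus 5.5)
The plan is to argue by contradiction. Suppose such a visible path $uabv$ exists, with $f(a)=t$ and $f(b)>t$. We first fix the combinatorial picture and then derive a contradiction from the midpoint formula \eqref{eq:midpoint-scalar}, applied to the vector-valued embedding $\Phi$ rather than to $f$ alone. Since $uv$ is collapsed, $\Phi(u)=\Phi(v)$, and hence $a'(u)=a'(v)$ for every $a'\in\Etwo$.

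\emph{Step 1: placing $b$.} The edge $bv$ is visible and $f(b),f(v)>t$, so $b$ and $v$ lie in the same component of $H_{>t}$. Hence $b$ lies in a different component from $u$. Suppose $u\sim b$. Then $\Phi(b)\neq\Phi(v)=\Phi(u)$, so $ub$ is visible, and $f(u),f(b)>t$ would put $u$ and $b$ in the same component of $H_{>t}$. This is impossible, so $u\not\sim b$. Since $u\sim a\sim b$, we get $d_G(u,b)=2$.

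\emph{Step 2: classifying common neighbours.} Let $Z=S_1(u)\cap S_1(b)$ and $c=|Z|$; note that $a,v\in Z$. Lemma~\ref{lem:midpoint}, applied coordinatewise to $\phi_1,\dots,\phi_m$, gives
\[
 \frac c2\bigl(\Phi(u)+\Phi(b)\bigr)=\sum_{z\in Z}\Phi(z).
\]
Take $z\in Z$ with $f(z)>t$. If both $uz$ and $zb$ were visible, then $u$ and $b$ would be joined in $H_{>t}$, a contradiction. So either $\Phi(z)=\Phi(u)$ or $\Phi(z)=\Phi(b)$. This splits $Z$ into three classes:
\begin{itemize}
 \item $Z_U$, where $\Phi(z)=\Phi(u)$; it contains $v$.
 \item $Z_B$, where $\Phi(z)=\Phi(b)$.
 \item $Z_0$, where $f(z)\le t$; it contains $a$.
\end{itemize}
Writing $\alpha=|Z_U|\ge1$, $\beta=|Z_B|$ and $\gamma=|Z_0|\ge1$, the identity becomes
\[
 \Bigl(\tfrac c2-\alpha\Bigr)\Phi(u)+\Bigl(\tfrac c2-\beta\Bigr)\Phi(b)=\sum_{z\in Z_0}\Phi(z).
\]

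\emph{Step 3: the contradiction.} This is where I expect the main difficulty. Evaluating the identity against $f$ alone does not immediately contradict $f(z)\le t<\min\{f(u),f(b)\}$, because the coefficients $c/2-\alpha$ and $c/2-\beta$ may be negative. I would combine two further pieces of information.

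First, the same midpoint argument applies to the pair $(a,v)$, or to the collapsed edge $uv$ via \eqref{eq:collapsed-average} when $S_1(u)\cap S_1(v)\neq\varnothing$. In both cases the vertex $a$, with $f(a)=t$, and the vertices $u$ and $b$ appear with controlled $\Phi$-values. These relations should pin down $\alpha$ and $\beta$; I expect $\alpha=\beta=c/2$ is the only possibility compatible with $\Phi(u)\neq\Phi(b)$. Second, once $\alpha=\beta=c/2$, the identity forces $\sum_{z\in Z_0}\Phi(z)=0$ with $Z_0\ni a$ nonempty. Combining this with the frame identity of Lemma~\ref{lem:frame-constant}, namely $Q(s)=\tfrac4{|V|}I_m$ at $s=u$ and $s=b$, should yield a contradiction. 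The reason is that $Q$ at $u$ and at $b$ would have to receive equal contributions from $\Phi(b)-\Phi(u)$ through $Z_B$ and $Z_U$ respectively, and this is incompatible with the extra visible edge $ua$, which carries a strictly negative $f$-increment.

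If the bookkeeping in the second point fails, the fallback is to exploit the freedom in choosing $f$. The genericity of Lemma~\ref{lem:visible-generic} is an open condition, so we may perturb $f$ within $\Etwo$ while keeping the component structure of $H_{>t}$ and the equality $f(a)=t$, the latter by adjusting $t$. Doing so turns the single vector identity into enough scalar inequalities $f(z)\le t$ to force $\gamma=0$, which contradicts $a\in Z_0$.
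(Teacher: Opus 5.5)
Your Steps 1 and 2 are correct and match the opening of the paper's proof. In particular, $u\not\sim b$, and every common neighbour $z$ with $f(z)>t$ is collapsed to exactly one of $u,b$.

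\textbf{The gap is Step 3, which contains no actual argument.} Put $s=f(u)$ and $r=f(b)$, and keep your counts $\alpha=|Z_U|$, $\beta=|Z_B|$, $\gamma=|Z_0|$. Evaluate your vector identity against $f$, using $\sum_{z\in Z_0}f(z)\le\gamma t$ and $(c/2-\alpha)+(c/2-\beta)=\gamma$. This gives
\[
(\alpha-\beta)(s-r)\ \ge\ \gamma\,(s+r-2t)\ >\ \gamma\,|s-r| .
\]
So the class sitting at the larger of the two values has at least $\gamma+1\ge 2$ more members than the other. This is perfectly consistent, so no contradiction arises, and it rules out $\alpha=\beta$. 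Moreover, $\alpha=\beta=c/2$ would force $Z_0=\varnothing$. Your plan to derive that equality and then use $\sum_{z\in Z_0}\Phi(z)=0$ with $Z_0\ne\varnothing$ is therefore incoherent.

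Nothing you cite would pin the counts down. The midpoint relation for $(a,v)$ involves a different set of common neighbours. The identity $Q(u)=Q(b)$ sums over all neighbours of $u$ and of $b$, so the contributions from $Z_U$ and $Z_B$ can be balanced elsewhere. The perturbation fallback also fails, because the lemma concerns a fixed $f$ and $t$. Small perturbations keep the sign of $s-r$ and so reproduce the same one-sided inequality. Perturbations large enough to flip that sign need not preserve the separation of $u,b$ in $H_{>t}$ or the bounds $f\le t$ on $Z_0$.

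\textbf{The missing idea is to propagate this local information and get the contradiction from finiteness of $G$.} In the paper, let $A_{-1},A_0$ be the lower- and higher-valued of $u,b$. The inequality above yields two common neighbours $c^1,c^2$ with value $f(A_0)$; they are collapsed to $A_0$ and visible to $A_{-1}$.

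Next apply \eqref{eq:midpoint-scalar} to $c^1,c^2$, or \eqref{eq:collapsed-average} if $c^1\sim c^2$ (that edge is then collapsed). Their common neighbour $A_{-1}$ lies strictly below the average $f(A_0)$, so some common neighbour $A_1$ has $f(A_1)>f(A_0)$. Both edges $A_1c^j$ are then visible, so $A_1$ lies in the $H_{>t}$-component of $A_{-1}$ and not in that of $A_0$. It follows that $d_G(A_0,A_1)=2$.

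Rerunning the same count for the pair $(A_0,A_1)$, with $c^1,c^2$ now at the lower value, gives two common neighbours at value $f(A_1)$, and so on. This produces vertices with $f(A_{-1})<f(A_0)<f(A_1)<\cdots$, which is impossible in a finite graph. Without such a propagation, or some other genuinely global input, identities at the single pair $(u,b)$ cannot yield the contradiction.
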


\begin{proof}
Assume that such vertices exist.  Put
\begin{equation*}
 s=f(u)=f(v),\qquad r=f(b),\qquad
 \alpha=\min\{s,r\},\qquad \beta=\max\{s,r\}.
\end{equation*}
Consequently, $ t<\alpha<\beta$.

Since $bv$ is an edge of $H_{>t}$, the vertices $b$ and $v$ lie in the same
component of $H_{>t}$.  Hence $u$ and $b$ lie in different components of
$H_{>t}$. Since $s\neq r$, if $u$ and $b$ were adjacent in $G$, then $ub$ would be a visible edge. It is impossible. Hence we have $d_G(u,b)=2$.  By \eqref{eq:midpoint-scalar}, we have
\begin{equation}\label{eq:ub-midpoint}
 \frac{s+r}{2}
 =\frac1{|S_1^G(u)\cap S_1^G(b)|}
   \sum_{z\in S_1^G(u)\cap S_1^G(b)}f(z).
\end{equation}

Let $z\in S_1^G(u)\cap S_1^G(b)$.  If $f(z)>t$, the edges $uz$ and $bz$ cannot both
be visible, since they would connect $u$ to $b$ inside $H_{>t}$.  They cannot
both be collapsed either, since that would give $s=f(z)=r$.  Thus exactly one
of the two edges is collapsed, and
\begin{equation*}
 f(z)\in\{s,r\}=\{\alpha,\beta\}
 \qquad\text{whenever }f(z)>t.
\end{equation*}
If $f(z)\le t$, then $f(z)<\alpha$.  Therefore every common neighbour
satisfies
\begin{equation}\label{eq:common-value-dichotomy}
 f(z)\le\alpha
 \qquad\text{or}\qquad
 f(z)=\beta.
\end{equation}
Moreover, the common neighbour $a$ satisfies $f(a)=t<\alpha$.

We claim that at least two common neighbours of $u$ and $b$ have value
$\beta$.  Put $m=|S_1^G(u)\cap S_1^G(b)|$.  The distinct vertices $a,v$ show that
$m\ge2$.  If at most one common neighbour had value $\beta$, then
\eqref{eq:common-value-dichotomy} would imply
\begin{equation}\label{eq:beta-count-bound}
 \frac1m\sum_{z\in S_1^G(u)\cap S_1^G(b)}f(z)
 <\frac{\beta+(m-1)\alpha}{m}.
\end{equation}
For $m=2$, the right-hand side equals $(\alpha+\beta)/2$ and for $m>2$,
\begin{equation*}
 \frac{\beta+(m-1)\alpha}{m}
 <\frac{\alpha+\beta}{2}.
\end{equation*}
In either case \eqref{eq:beta-count-bound} contradicts
\eqref{eq:ub-midpoint}.  Hence there exist distinct vertices
$c_0^1,c_0^2\in S_1^G(u)\cap S_1^G(b)$ with
\begin{equation*}
 f(c_0^1)=f(c_0^2)=\beta.
\end{equation*}

Let $A_{-1}$ be the vertex in $\{u,b\}$ whose value is $\alpha$, and let
$A_0$ be the other one.  Write
\begin{equation*}
 h_{-1}=f(A_{-1})=\alpha,
 \qquad
 h_0=f(A_0)=\beta.
\end{equation*}
The vertices $A_{-1}$ and $A_0$ lie in different components of $H_{>t}$.
For $j=1,2$, the vertex $c_0^j$ is adjacent to both of them. The edges $A_0c_0^j$ are collapsed, whereas $A_{-1}c_0^j$ are visible edges. 

We now construct recursively vertices
\begin{equation*}
 A_{-1},A_0,A_1,A_2,\ldots
 \quad\text{and}\quad
 c_n^1,c_n^2\quad(n\ge0)
\end{equation*}
with the following properties:
\begin{enumerate}
    \item \label{en:recursive-heights} $h_n:=f(A_n), h_{n-1}<h_n$;
     \item  \label{en:recursive-components} $A_{n-1},A_n$ lie in different components of $H_{>t}$;
    \item there exist distinct vertices $c_n^1,c_n^2\in S_1^G(A_{n-1})\cap S_1^G(A_n)$ such that
    \begin{equation}
      \label{eq:c1c2collapsed}
        f(c_n^1)=f(c_n^2)=h_n;
    \end{equation}
    \item  \label{en:recursive-edge-types} $A_nc_n^j$ is collapsed and $A_{n-1}c_n^j$ is visible for $j=1,2$.
\end{enumerate}
The case $n=0$ was established above.

Assume that $A_k$ and $c_k^1,c_k^2$ have been constructed for $0\leq k\leq n$.  The
vertices $c_n^1,c_n^2$ have the same $f$-value and have the common neighbours
$A_{n-1},A_n$.  If $c_n^1\sim_G c_n^2$ is an edge, it is collapsed by
\eqref{eq:c1c2collapsed}, and \Cref{lem:adjacent-edge} applies; if it is
not an edge, the two vertices have distance two in $G$ and
\Cref{lem:midpoint} applies.  In both cases the average of $f$ over their
common neighbours equals $h_n$:
\begin{equation*}
 h_n=
 \frac1{|S_1^G(c_n^1)\cap S_1^G(c_n^2)|}
 \sum_{z\in S_1^G(c_n^1)\cap S_1^G(c_n^2)}f(z).
\end{equation*}
Since $A_{n-1}$ is a common neighbour with value $h_{n-1}<h_n$, there is a
common neighbour $A_{n+1}$ satisfying
\begin{equation*}
 h_{n+1}:=f(A_{n+1})>h_n.
\end{equation*}
Both edges $A_{n+1}c_n^j$ are visible.  Thus $A_{n+1}$ is connected in
$H_{>t}$ to $A_{n-1}$ through $c_n^j$.  By
\ref{en:recursive-components}, $A_n$ and $A_{n+1}$ lie in different
components of $H_{>t}$.

The vertices $A_n,A_{n+1}$ cannot be adjacent in $G$: a collapsed edge would
contradict $h_n<h_{n+1}$, while a visible edge would force them to lie in the same component of $H_{>t}$.  They have the two common neighbours $c_n^1,c_n^2$,
so \Cref{lem:midpoint} gives
\begin{equation}\label{eq:An-midpoint}
 \frac{h_n+h_{n+1}}2
 =\frac1{|S_1^G(A_n)\cap S_1^G(A_{n+1})|}
   \sum_{z\in S_1^G(A_n)\cap S_1^G(A_{n+1})}f(z).
\end{equation}
As before, every common neighbour $z$ with $f(z)>t$ must be joined to exactly
one of $A_n,A_{n+1}$ by a collapsed edge.  Hence
\begin{equation*}
 f(z)\le h_n
 \qquad\text{or}\qquad
 f(z)=h_{n+1}.
\end{equation*}
The two common neighbours $c_n^1,c_n^2$ both have value $h_n$.  If at most one
common neighbour had value $h_{n+1}$, then the average in
\eqref{eq:An-midpoint} would be strictly smaller than
$(h_n+h_{n+1})/2$: for two common neighbours it is at most $h_n$, and for
$m\ge3$ common neighbours it is at most
\[
 \frac{h_{n+1}+(m-1)h_n}{m}
 <\frac{h_n+h_{n+1}}2.
\]
This contradiction produces two distinct common neighbours
$c_{n+1}^1,c_{n+1}^2$ of value $h_{n+1}$.  The assertions in
\ref{en:recursive-edge-types} hold.  The construction is complete.

\ref{en:recursive-heights} yields the strictly increasing sequence
\[
 h_{-1}<h_0<h_1<h_2<\cdots.
\]
Consequently the vertices $A_{-1},A_0,A_1,\ldots$ are pairwise distinct,
contradicting the finiteness of $G$.  This proves the lemma.
\end{proof}

\begin{lemma}
\label{lem:collapsed-superlevel-detour}
Let $uv$ be a collapsed edge, let $t\in\R$, and suppose
\begin{equation*}
 f(u)=f(v)>t.
\end{equation*}
If there exists a visible  path from $u$ to $v$, all of whose vertices have value at
least $t$, then there exists a visible path from $u$ to $v$ all of whose vertices
have value strictly larger than $t$.
\end{lemma}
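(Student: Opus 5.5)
We argue by contradiction, using \Cref{lem:four-cycle-exclusion} as the engine. Throughout, $f$ is the generic $2$-eigenfunction fixed by \eqref{eq:visible-strict}. Suppose $u$ and $v$ lie in different components of $H_{>t}$, while some visible path from $u$ to $v$ stays in $\{f\ge t\}$. Among all such paths, choose
\[
 P=(u=x_0,x_1,\dots,x_n=v)
\]
minimizing the number $N(P)$ of indices $i$ with $f(x_i)=t$. Since $u,v$ are in different components of $H_{>t}$, we have $N(P)\ge1$.

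First we describe the local picture at a vertex of value $t$. Let $f(x_i)=t$ with $0<i<n$. Along a visible edge $f$ changes, so $f(x_{i\pm1})\neq t$, and therefore $f(x_{i\pm1})>t$. Hence the vertices of value $t$ on $P$ are isolated, and $P$ splits into maximal segments lying in $\{f>t\}$. Each such segment lies in a single component of $H_{>t}$. Since $u$ and $v$ are in different components, some value-$t$ vertex $a=x_i$ has its neighbours $x_{i-1}$ and $x_{i+1}$ in different components $C_1\ni x_{i-1}$ and $C_2\ni x_{i+1}$. Choose $a$ so that the segment before it lies in $u$'s component; this is possible because the first component change happens at some such vertex.

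The main step is to reduce to the configuration excluded by \Cref{lem:four-cycle-exclusion}: a collapsed edge $u'v'$ with $f(u')=f(v')>t$, endpoints in different components of $H_{>t}$, and a visible path $u'abv'$ with $f(a)=t$ and $f(b)>t$. The natural tool is the midpoint identity \eqref{eq:midpoint-scalar}, applied to the pair $x_{i-1},x_{i+1}$. They are not visibly adjacent, since they lie in different components, so they are either at distance two or joined by a collapsed edge. In the latter case \eqref{eq:collapsed-average} applies instead. Either way, the average of $f$ over their common neighbours is at least $\min f(x_{i\pm1})>t$, yet $a$ is a common neighbour with value $t$. So some common neighbour $z$ has $f(z)>\min f(x_{i\pm1})$. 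The edges $zx_{i\pm1}$ cannot both be visible, because $z$ would then join $C_1$ and $C_2$ in $H_{>t}$. Thus one of them is collapsed.

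We then mimic the ascending construction from the proof of \Cref{lem:four-cycle-exclusion} and track values. Suppose $zx_{i+1}$ is collapsed and $zx_{i-1}$ is visible. Then $z\in C_1$, $f(z)=f(x_{i+1})$, and $z\,x_{i+1}$ is a collapsed edge across components. Together with the visible path $z$–$x_{i-1}$–$a$–$x_{i+1}$ this is nearly the forbidden pattern, but it has one vertex too many. To repair this, we would apply the midpoint argument to $z$ and $a$, or iterate the ascending recursion, which yields strictly increasing values and so terminates by finiteness. The terminal configuration should give exactly a path $u'abv'$ as in \Cref{lem:four-cycle-exclusion}, a contradiction.

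An alternative, if the contradiction does not close, is to produce a new path with smaller $N(P)$, contradicting minimality. This works when the common neighbour $z$ makes both edges visible, because then $a$ can be replaced by $z$ and $N$ decreases.

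The main obstacle is precisely this reduction step: converting a general value-$t$ crossing into the exact four-vertex pattern $uabv$ with $uv$ collapsed. The plan is to induct on $N(P)$ together with the length of $P$, at each crossing either shortcutting via a common neighbour of value above $t$ or producing a collapsed cross-component edge adjacent to $a$, and to feed the latter into \Cref{lem:four-cycle-exclusion}.
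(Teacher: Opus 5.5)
Your setup is sound, and your choice of crossing vertex is slightly cleaner than the paper's. You take a value-$t$ vertex $a=x_i$ at which the $H_{>t}$-component switches, so $x_{i-1}\in C_1$ and $x_{i+1}\in C_2$ with $C_1\neq C_2$ directly. The minimality of $N(P)$ is therefore never used. The paper instead minimizes over visible walks, and uses that minimality to force the two walk-neighbours of an arbitrary value-$t$ vertex into different components. Your midpoint/collapsed-average step producing $z$ is exactly the paper's.

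As written, however, the proposal does not prove the lemma. You stop at the configuration you reached, say it has ``one vertex too many'', and call the remaining reduction the main obstacle. What follows is only a plan (iterate the ascending recursion, induct on $N(P)$ and length), which is not an argument.

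There is no obstacle: you have already reached the forbidden configuration. In your case, $zx_{i+1}$ is collapsed and $zx_{i-1}$ is visible. Read your four-vertex path backwards: $x_{i+1}\,a\,x_{i-1}\,z$.
\begin{itemize}
\item It is a visible path with $f(a)=t$ and $f(x_{i-1})>t$.
\item Its endpoints are joined by the collapsed edge $x_{i+1}z$, with $f(x_{i+1})=f(z)>t$.
\item $x_{i+1}\in C_2$, while $z\in C_1$, because $zx_{i-1}$ is a visible edge whose endpoints both have value $>t$.
\end{itemize}
This is exactly the hypothesis of \Cref{lem:four-cycle-exclusion}, so you have your contradiction. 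The symmetric case, $zx_{i-1}$ collapsed, gives the path $x_{i-1}\,a\,x_{i+1}\,z$ without reversal.

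To make the case split airtight, state two consequences of $f(z)>\min\{f(x_{i-1}),f(x_{i+1})\}$ explicitly.
\begin{itemize}
\item Both edges $zx_{i\pm1}$ cannot be collapsed, since that would force $f(z)=f(x_{i-1})=f(x_{i+1})$.
\item If $x_{i-1}x_{i+1}$ is itself a collapsed edge, then neither edge $zx_{i\pm1}$ can be collapsed. So both are visible, and $z$ joins $C_1$ to $C_2$ in $H_{>t}$, a contradiction.
\end{itemize}
With this completion your proof is the paper's: its quadruple $(a,b,c,w)$ is your $(x_{i+1},a,x_{i-1},z)$.
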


\begin{proof}
Assume $u$ and $v$ do not lie in the same connected component of $H_{>t}$. Among all
visible walks from $u$ to $v$ whose vertices have value at least $t$, choose
one,
\begin{equation*}
 P=(u=v_0,v_1,\ldots,v_m=v),
\end{equation*}
minimizing $\card{\{i: f(v_i)=t\}}$.  This
number is positive.  Choose an index $j$ with $f(v_j)=t$, and put
\begin{equation*}
 a=v_{j-1},\qquad b=v_j,\qquad c=v_{j+1}.
\end{equation*}
The endpoints of $P$ have value greater than $t$, and adjacent vertices of a
visible walk have different $f$-values.  Hence
\begin{equation*}
 f(a)>t,
 \qquad
 f(c)>t.
\end{equation*}

First, we have $a,c$ lie in different components of $H_{>t}$. Otherwise replacing $abc$ by the path joining $a$ and $c$ in $H_{>t}$ contradicts the
minimality of $P$. Thus if $a$ is adjacent to $c$ in $G$, then $ac$ must be collapsed. By \Cref{lem:adjacent-edge},
\begin{equation*}
 f(a)=f(c)=\frac1{|S_1^G(a)\cap S_1^G(c)|}
   \sum_{z\in S_1^G(a)\cap S_1^G(c)}f(z).
\end{equation*}
The common neighbour $b$ has value $t<f(a)$, so another common neighbour $z$
has value $f(z)>f(a)$.  Then $azc$ is a path in $H_{>t}$, a contradiction.

It remains that $a$ and $c$ are nonadjacent in $G$.  By
\Cref{lem:midpoint},
\begin{equation}\label{eq:ac-midpoint-detour}
 \frac{f(a)+f(c)}2
 =\frac1{|S_1^G(a)\cap S_1^G(c)|}
   \sum_{z\in S_1^G(a)\cap S_1^G(c)}f(z).
\end{equation}
Since $b$ is a common neighbour with value below the left-hand side, there is
a common neighbour $w\ne b$ such that
\begin{equation*}
 f(w)>\frac{f(a)+f(c)}2>t.
\end{equation*}
Then exactly one of $aw,cw$ is collapsed and the other one is visible. 

After interchanging $a$ and $c$,
assume that $aw$ is collapsed and $cw$ is visible.  Then $abcw$ is a visible path, its vertices have value at least $t$, its endpoints are
joined by the collapsed edge $aw$, and
\begin{equation}
\label{eq:SquareWithContradiction}
 f(a)=f(w)>t,
 \qquad
 f(b)=t,
 \qquad
 f(c)>t.
\end{equation}
Since $a$ and $c$ lie in different components of $H_{>t}$ and $wc$ is visible, $a,w$ don't lie in the same component of $H_{>t}$. \eqref{eq:SquareWithContradiction} contradicts \Cref{lem:four-cycle-exclusion}.  The contradiction proves the lemma.
\end{proof}

\begin{lemma}
\label{lem:visible-superlevel-bypass}
Let $abc$ be a visible path and suppose
\begin{equation*}
 f(a)>f(b),
 \qquad
 f(c)>f(b).
\end{equation*}
Then $a$ and $c$ are joined by a visible path every vertex $w$ of which
satisfies
\begin{equation*}
 f(w)>f(b).
\end{equation*}
\end{lemma}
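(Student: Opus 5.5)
We set $t=f(b)$. Since $ab$ and $bc$ are visible, $f(a)\neq f(b)\neq f(c)$ by \eqref{eq:visible-strict}, and the hypothesis gives $f(a)>t$ and $f(c)>t$. Suppose, for contradiction, that $a$ and $c$ lie in different components of $H_{>t}$. The plan is to use the midpoint formula \eqref{eq:midpoint-scalar} (or the collapsed-edge average \eqref{eq:collapsed-average}) on the pair $a,c$ to produce a common neighbour of large value. That neighbour will either join $a$ to $c$ inside $H_{>t}$, or create a configuration forbidden by \Cref{lem:four-cycle-exclusion} or \Cref{lem:collapsed-superlevel-detour}. This repeats the core step of the proof of \Cref{lem:collapsed-superlevel-detour}, now starting from the visible path $abc$ instead of a minimal walk.

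\emph{Case 1: $a\sim c$ in $G$.} If $ac$ is visible, then $a,c$ are already joined inside $H_{>t}$, a contradiction. So $ac$ is collapsed and $f(a)=f(c)>t$. Now $abc$ is a visible path from $a$ to $c$ all of whose vertices have value at least $t$. \Cref{lem:collapsed-superlevel-detour} therefore gives a visible path from $a$ to $c$ with all values $>t$, which is the desired path. Alternatively, one can argue directly: $b$ is a common neighbour with value $t<f(a)$, so \eqref{eq:collapsed-average} yields a common neighbour $z$ with $f(z)>f(a)$. Both edges $az$ and $cz$ are then visible, and $azc$ lies in $H_{>t}$.

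\emph{Case 2: $d_G(a,c)=2$.} By \Cref{lem:midpoint}, the average of $f$ over $S_1(a)\cap S_1(c)$ equals $(f(a)+f(c))/2$. Since $b$ is a common neighbour with $f(b)=t$ below this average, some common neighbour $w\neq b$ has $f(w)>(f(a)+f(c))/2>t$. The edges $aw$ and $cw$ cannot both be visible, since that would connect $a$ and $c$ in $H_{>t}$. They cannot both be collapsed, since then $f(a)=f(w)=f(c)$, contradicting $f(w)>(f(a)+f(c))/2$ (the case $f(a)=f(c)$ would force $f(w)=f(a)$ below the strict bound). So, after relabelling, $aw$ is collapsed and $cw$ is visible, which gives $f(a)=f(w)>t$. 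Then $a$ and $w$ lie in different components of $H_{>t}$, because $w$ is connected to $c$ there. The path $a\,b\,c\,w$ is a visible path with $f(b)=t$ and $f(c)>t$, joining the endpoints of the collapsed edge $aw$ with $f(a)=f(w)>t$. This is exactly the configuration excluded by \Cref{lem:four-cycle-exclusion}, a contradiction. (Alternatively, \Cref{lem:collapsed-superlevel-detour} applied to $aw$ and the path $abcw$ puts $a$ and $w$ in the same component of $H_{>t}$, hence $a$ and $c$ as well.)

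The only delicate point is Case 2. There one must check that the vertex $w$ supplied by the midpoint average really lands in the asymmetric collapsed/visible pattern, and that the components of $H_{>t}$ are tracked correctly so that the hypotheses of \Cref{lem:four-cycle-exclusion} hold verbatim (distinct components, $f(a)=f(w)>t$, $f(b)=t$, $f(c)>t$). Once this is verified, the lemma follows.
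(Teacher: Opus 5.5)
Your proof is correct and follows the paper's argument. You use the same case split on whether $a\sim c$, the same use of \eqref{eq:collapsed-average} and \eqref{eq:midpoint-scalar} to produce a high common neighbour, and, in the mixed case, the same configuration $abcw$, which you exclude directly via \Cref{lem:four-cycle-exclusion}, whereas the paper applies \Cref{lem:collapsed-superlevel-detour} and appends the edge $wc$; this difference is cosmetic, since the latter lemma is itself proved from the former, and you are also right that the both-collapsed subcase is vacuous because $f(w)>(f(a)+f(c))/2$, which the paper treats redundantly.
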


\begin{proof}
Put $t=f(b)$.  If $ac$ is visible, the edge $ac$ is the required path.  If
$ac$ is collapsed, write $f(a)=f(c)=h>t$.  Then
\Cref{lem:adjacent-edge} and the common neighbour $b$ with lower $f$-value produce another
common neighbour $z$ with $f(z)>h$.  Both $az$ and $zc$ are visible, so
$azc$ is a desired path.

Suppose now that $a$ and $c$ are nonadjacent in $G$.  The \eqref{eq:midpoint-scalar} gives a common neighbour $w$ with
\begin{equation*}
 f(w)>\frac{f(a)+f(c)}2>t.
\end{equation*}
If both $aw$ and $cw$ are visible, use $awc$.  If both are collapsed, then
$f(a)=f(w)=f(c)=h$; the \eqref{eq:midpoint-scalar} and the common neighbour $b$
produce a common neighbour $z$ with $f(z)>h$, and $azc$ works.

In the remaining case exactly one incident edge is collapsed.  After
interchanging $a,c$, assume $aw$ is collapsed and $cw$ is visible.  The
visible path $abcw$ has all values at least $t$ and its collapsed-edge
endpoints $a,w$ have common value greater than $t$.  By
\Cref{lem:collapsed-superlevel-detour}, there is a visible path joining $a$ and $w$
entirely in $\{f>t\}$.  Appending the visible edge $wc$ gives the required
path.
\end{proof}

\begin{theorem}[Unique extrema of a generic $2$-eigenfunction]
\label{thm:unique-maximum}
Let $f\in\Etwo$ satisfy \eqref{eq:visible-strict}, and let $H$ be a connected
component of $\Gvis$.  Then $f|_H$ has a unique maximum point and a unique
minimum point.
\end{theorem}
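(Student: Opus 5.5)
By the symmetry $f\mapsto -f$ (which preserves $\Etwo$, condition \eqref{eq:visible-strict}, and the visible/collapsed classification), it suffices to prove uniqueness of the maximum; the minimum then follows by applying the same argument to $-f$. The strategy is to show that every local maximum of $f$ on $H$, in the sense of the graph $H$, is a global maximum of $f|_H$. Then we show that two distinct global maximum points cannot coexist.

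\emph{Step 1: the superlevel sets of $f|_H$ are connected.} We claim that for every $t$, the set $\{x\in V(H): f(x)>t\}$ induces a connected subgraph of $\Gvis$. It is convenient to phrase this as a ``mountain pass'' statement. Suppose $x,y\in V(H)$ with $f(x),f(y)>t$. Take a visible path $P$ from $x$ to $y$ in $H$ that maximizes the minimum value $\min_P f$, and among such paths minimizes the number of vertices attaining this minimum. Suppose this minimum $s$ satisfies $s\le t$. Pick an interior vertex $b$ with $f(b)=s$ and its neighbours $a,c$ on $P$. Because adjacent vertices on a visible path have distinct $f$-values, we have $f(a),f(c)>f(b)$. \Cref{lem:visible-superlevel-bypass} now replaces the segment $abc$ by a visible path on which every vertex has value $>f(b)$. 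Concatenating and pruning to a path, either the minimum of the path increases or the number of vertices attaining the minimum drops. This contradicts the choice of $P$, so $s>t$.

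\emph{Step 2: a local maximum is global, and the maximum is unique.} Let $M=\max_H f$, and suppose $p\neq q$ are two maximum points. Apply Step 1 with $t<M$ chosen just below $M$, namely larger than every value of $f|_H$ that is strictly less than $M$. This produces a visible path from $p$ to $q$ all of whose vertices have value $>t$, hence value exactly $M$. But a visible path of length $\ge1$ joins vertices of distinct $f$-values by \eqref{eq:visible-strict}, which is a contradiction. Thus the maximum point is unique. The same argument shows that any local maximum $x$ of $f|_H$ is the global maximum. Indeed, if $f(x)<M$, connect $x$ to the maximum point inside $\{f>f(x)-\varepsilon\}$ with $\varepsilon$ small; then the first edge of this path leaves $x$ towards a vertex of value $>f(x)$. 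This last statement is not needed for the theorem, but it is useful later.

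\emph{Main obstacle.} The substantive work is concentrated in \Cref{lem:visible-superlevel-bypass}, which is already available. The step needing care is the induction in Step 1: the modified walk must be pruned to a path, and we must check that the chosen complexity measure strictly decreases. The measure is the pair (minimum value on the path, number of vertices attaining it), ordered lexicographically with the first coordinate maximized. Since there are finitely many paths, the extremal choice exists. Once the replacement of $b$ is shown to reduce this measure, the conclusion follows immediately.
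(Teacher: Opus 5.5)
Your proof is correct and is essentially the paper's argument. The paper also takes a visible path that maximizes $\min_P f$ and, subject to that, minimizes the number of vertices attaining the minimum; it then removes a minimizing interior vertex via \Cref{lem:visible-superlevel-bypass} plus loop erasure, and passes to $-f$ for the minimum. The only difference is that the paper applies this directly to a path between two putative maximizers $p,q$, whereas you first package the same extremal-path step as connectivity of the superlevel sets $\{f>t\}\cap V(H)$ and then specialize.
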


\begin{proof}
Suppose that two distinct vertices $p,q\in H$ both maximize $f|_H$.  Among
all visible paths $P$ from $p$ to $q$, choose one for which
\begin{equation}\label{eq:path-floor}
 \mu(P):=\min_{x\in V(P)}f(x)
\end{equation}
is as large as possible.  Subject to this condition, choose $P$ so that the
number of vertices $x\in V(P)$ satisfying $f(x)=\mu(P)$ is as small as
possible.

First, we have $pq$ collapsed. Hence $P$ has an internal vertex and
$\mu(P)<f(p)$.  Choose an internal vertex $b$ of $P$ with
$f(b)=\mu(P)$, and denote its two neighbours on $P$ by $a$ and $c$.  Since
$ab$ and $bc$ are visible and no value on $P$ is below $f(b)$,
\begin{equation*}
 f(a)>f(b),
 \qquad
 f(c)>f(b).
\end{equation*}
By \Cref{lem:visible-superlevel-bypass}, there is a visible path from $a$ to $c$ all
of whose vertices have value strictly larger than $f(b)$.  Replace the
segment $abc$ of $P$ by this path and delete closed subwalks, preserving the
endpoints $p,q$.  The resulting simple visible path $P'$ satisfies
$\mu(P')\ge\mu(P)$.  If the inequality is strict, it contradicts the maximal
choice of \eqref{eq:path-floor}.  If equality holds, the vertex $b$ has been
removed and no new vertex of value $\mu(P)$ has been inserted, contradicting
the secondary minimal choice.  Thus the maximum is unique.

The function $-f$ also belongs to $\Etwo$ and satisfies
\eqref{eq:visible-strict}.  Applying the already proved maximum statement to
$-f$ shows that the minimum of $f|_H$ is unique.
\end{proof}


\section{The 1-ball structure at the extrema of a generic 2-eigenfunction}
For the remainder of the section, fix a connected component $H$ of $\Gvis$ and
a generic $2$-eigenfunction $f$ satisfying \eqref{eq:visible-strict}.  By \Cref{thm:unique-maximum}, there is a
unique maximum point $p\in H$. Put $ P=\Phi(p)$. Define
\begin{equation*}
 X=S_1^H(p),\qquad D=|X|,\qquad
 Z=S_1^G(p)\setminus X,\qquad c=|Z|.
\end{equation*}
Every edge $pz$ with $z\in Z$ is collapsed.  For $x\in X$, put
\begin{equation*}
 a_x=f(p)-f(x)>0.
\end{equation*}
In this section, we want to characterize the $1$-ball structure of $G$ at $p$ and do some preparations for the proof of \Cref{thm:intrinsic-Gamma}.

\subsection{1-sphere structure}
\begin{lemma}
\label{lem:no-XZ}
There is no edge of $G$ joining a vertex of $X$ to a vertex of $Z$.
\end{lemma}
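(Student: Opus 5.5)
The plan is a direct contradiction argument that combines the definition of visibility with the uniqueness of the maximum in \Cref{thm:unique-maximum}; no curvature computation should be needed.

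Suppose, for contradiction, that some $x\in X$ and $z\in Z$ are adjacent in $G$. Since $z\in Z$, the edge $pz$ is collapsed, so $\Phi(z)=\Phi(p)=P$. In particular $f(z)=f(p)$, because $f\in\Etwo$ is a linear combination of the coordinates of $\Phi$.

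Since $x\in X=S_1^H(p)$, the edge $px$ is visible, so $\Phi(x)\neq\Phi(p)$. Combining this with the previous step gives $\Phi(x)\neq\Phi(z)$, so the edge $xz$ is visible. Hence $z$ lies in the same component $H$ of $\Gvis$ as $x$, and therefore as $p$.

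Now $z\in V(H)$, $z\neq p$ (since $z\in S_1^G(p)$), and $f(z)=f(p)=\max_H f$. Thus $z$ is a second maximum point of $f|_H$, which contradicts \Cref{thm:unique-maximum}. So no such edge exists.

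The only step needing care is the visibility of $xz$. It relies on the fact that visibility is defined through $\Phi$ rather than through $f$: a collapsed edge $pz$ forces $\Phi(z)=\Phi(p)$ exactly, not merely equal $f$-values. With that observation the rest is immediate, so I expect no real obstacle.
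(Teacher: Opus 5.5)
Your proof is correct and is essentially the paper's argument. The paper splits into the cases $xz$ visible or collapsed and rules out the collapsed case via $f(x)=f(z)=f(p)$; you rule it out directly from $\Phi(x)\neq P=\Phi(z)$, which is the same observation expressed through $\Phi$ rather than through $f$.
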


\begin{proof}
Let $x\in X$ and $z\in Z$.  Since $pz$ is collapsed,
$\Phi(z)=P$ and $f(z)=f(p)$.  If $xz$ were visible, then $z$ would belong to
the visible component $H$, contradicting the uniqueness of the maximum $p$.
If $xz$ were collapsed, then $f(x)=f(z)=f(p)$, contradicting the visibility of
$px$ and the maximality of $p$.  Hence $xz\notin E(G)$.
\end{proof}

\begin{lemma}
\label{lem:Z-matrix}
Let $B$ be a real symmetric matrix with nonpositive off-diagonal entries.  If
$b>0$ and $Bb=\lambda b$, then $B\succeq\lambda I$.
\end{lemma}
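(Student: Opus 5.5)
The plan is to reduce this to the classical statement that a symmetric matrix with nonpositive off-diagonal entries and a positive eigenvector has that eigenvalue at the bottom of its spectrum. We conjugate $B$ by the diagonal matrix built from $b$ and then write the quadratic form of $B-\lambda I$ as a sum of nonnegative squares. Write $b=(b_1,\ldots,b_n)$ with all $b_i>0$, and fix an arbitrary vector $v\in\R^n$. Set $w_i=v_i/b_i$, so that $v_i=b_iw_i$. The goal is to show that $\langle (B-\lambda I)v,v\rangle\geq 0$.

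First, use the eigen-equation row by row. For each $i$ it gives
\[
B_{ii}b_i=\lambda b_i-\sum_{j\neq i}B_{ij}b_j.
\]
Next, expand the form:
\[
\langle Bv,v\rangle=\sum_i B_{ii}b_i^2w_i^2+\sum_{i\neq j}B_{ij}b_ib_jw_iw_j.
\]
Substitute the row identity into the diagonal terms, multiplying it by $b_iw_i^2$. This yields
\[
\langle Bv,v\rangle-\lambda\langle v,v\rangle=\sum_{i\neq j}B_{ij}b_ib_j\bigl(w_iw_j-w_i^2\bigr).
\]
Symmetrize over the pair $(i,j)$, using $B_{ij}=B_{ji}$. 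The right-hand side becomes
\[
-\frac12\sum_{i\neq j}B_{ij}b_ib_j(w_i-w_j)^2.
\]

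Finally, check the sign term by term. Each $B_{ij}\leq 0$ for $i\neq j$, each $b_ib_j>0$, and each $(w_i-w_j)^2\geq0$, so every summand of the last sum is nonnegative after the overall minus sign. Hence $\langle (B-\lambda I)v,v\rangle\geq0$ for all $v$, which is exactly $B\succeq\lambda I$.

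No step is a real obstacle. The only care needed is the bookkeeping in the symmetrization, namely the factor $\tfrac12$ and the sign. An alternative route is Perron--Frobenius applied to $cI-B$ for a large constant $c$, but that requires an irreducibility discussion, so the direct sum-of-squares identity is preferable.
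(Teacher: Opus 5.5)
Your proof is correct and is essentially the paper's argument. The paper sets $C=B-\lambda I$, uses $Cb=0$, and writes $u^TCu=\sum_{i<j}(-C_{ij})\,b_ib_j\,(u_i/b_i-u_j/b_j)^2\geq 0$, which is exactly your symmetrized sum-of-squares identity: your factor $\tfrac12$ over ordered pairs is the same as the paper's sum over $i<j$.
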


\begin{proof}
Put $C=B-\lambda I$ and $w_{ij}=-C_{ij}\ge0$ for $i\ne j$.  Since $Cb=0$,
for every $u$ one has
\begin{equation*}
 u^TCu
 =\sum_{i<j}w_{ij}b_ib_j
 \left(\frac{u_i}{b_i}-\frac{u_j}{b_j}\right)^2\ge0.
\end{equation*}
Thus $C\succeq0$.
\end{proof}

Let $K=G[X]$ and define
\begin{equation*}
    M=\{q\in S_2^G(p):\ S_1^G(q)\cap X\neq \varnothing,\ S_1^G(q)\cap Z\neq \varnothing \}.
\end{equation*}
For any $x\in X$ define
\begin{equation*}
    t_x=\deg_K(x),\qquad r_x=|S_1^G(x)\setminus(\{p\}\cup X\cup Z)|.
\end{equation*}

The next proposition describes the $1$-sphere structure of $H$ at vertex $p$.
\begin{proposition}
\label{prop:mixed-saturation}
The following statements hold
\begin{enumerate}
    \item \label{en:1Sphere1}For every $x\in X$,
    \begin{equation}
    \label{eq:r_x}
        r_x=D+c-1
    \end{equation}
    and every edge of $K$ is collapsed;
    \item \label{en:1Sphere2}For every $(x,z)\in X\times Z$, there exists a unique vertex $m(x,z)\in M$ adjacent to both $x$ and $z$;
    \item \label{en:1Sphere3}Conversely, every $m\in M$ has exactly one neighbour in $X$ and one neighbour in $Z$. Its
edge to the $X$-neighbour is collapsed.
\end{enumerate}
\end{proposition}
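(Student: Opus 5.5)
The plan is to read off the local structure at $p$ from the equality case of the curvature inequality at $p$, using the curvature matrix $A_\infty(p)$ of \Cref{def:curMatrix}, and to supplement it with the midpoint and collapsed-edge formulas (\Cref{lem:midpoint}, \Cref{lem:adjacent-edge}).

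\emph{Spectral input at $p$.} By \Cref{thm:first-sharp-K2}, $\mathcal K_\infty(p)=2=\lambda_{\min}(A_\infty(p))$, so $A_\infty(p)-2I\succeq 0$. Combining \eqref{eq:defect-bilinear-zero} with the Schur-complement description of $A_\infty(p)$ should give the following: for every $a\in\Etwo$, the gradient vector $(a(y)-a(p))_{y\in S_1(p)}$ lies in $\ker(A_\infty(p)-2I)$.

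Apply this to $f$. Its gradient vector is $v=(-a_x)_{x\in X}\oplus 0_Z$, since $f(z)=f(p)$ for $z\in Z$. By \Cref{lem:no-XZ}, $\varepsilon_{xz}=0$, and hence:
\begin{itemize}
\item the $Z$-rows of $(A_\infty(p)-2I)v=0$ give $\sum_{x\in X}(1-2\omega_{xz})a_x=0$ for every $z\in Z$;
\item the $X$-rows say that the positive vector $(a_x)$ is a $2$-eigenvector of the block $A_{XX}$.
\end{itemize}

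\emph{Row sums.} Next use the row-sum identity \eqref{eq:A-row-sum}. Since $x$ has no $Z$-neighbours, $\deg(x)=1+t_x+r_x$, so
\[
(A_\infty(p)\one)_x-2=\tfrac12\,(D+c-1-r_x).
\]
Subtracting the $Z$-columns gives the row sums of the $X$-block:
\[
(A_{XX}\one)_x-2=\tfrac12(D+c-1-r_x)-\sum_{z\in Z}(1-2\omega_{xz}).
\]
I will pair these with the positive eigenvector $(a_x)$, in a Perron--Frobenius style argument in the spirit of \Cref{lem:Z-matrix}, and with positive semidefiniteness tested on $\one_X$ and on $\one_X+s\one_Z$ for a real parameter $s$. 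The goal is to force three things simultaneously:
\begin{itemize}
\item $\varepsilon_{xx'}$ contributes nothing harmful, i.e.\ edges in $K$ are collapsed;
\item $\omega_{xz}=1/2$ exactly, for every $(x,z)\in X\times Z$;
\item $r_x=D+c-1$.
\end{itemize}

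\emph{Combinatorial input.} For \ref{en:1Sphere1}, suppose $xx'\in E(K)$ were visible. Then $H$ would contain the triangle $pxx'$. I plan to rule this out by applying \eqref{eq:adjacent-edge} to the edge $xx'$ with all $a\in\Etwo$ (equivalently, to $\Phi$), together with the fact that $p$ is the strict maximum of $f$ on $H$ (\Cref{thm:unique-maximum}).

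For \ref{en:1Sphere2} and \ref{en:1Sphere3}, take $(x,z)\in X\times Z$. Since $d_G(x,z)=2$ via $p$, \Cref{lem:midpoint} applied to $f$ gives
\[
\frac{f(x)+f(p)}2=\operatorname{avg}_{w\sim x,\,w\sim z}f(w),
\]
and $p$ itself contributes the value $f(p)>\tfrac{f(x)+f(p)}2$. Hence there must be common neighbours $q\in M$ with $f(q)<\tfrac{f(x)+f(p)}{2}$. For such $q$, the edge $qz$ cannot be collapsed, since that would force $f(q)=f(p)$; so $qz$ is visible. Running the same argument as in \Cref{lem:no-XZ} with $q$ in place of $x$, this should force $qx$ to be collapsed, so $f(q)=f(x)$.

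Plugging this into the midpoint average fixes the number of such $q$ relative to the number of common neighbours. Combined with $\omega_{xz}=\tfrac12$ and $d_p^-(q)\ge 2$ for $q\in M$, this forces exactly one common $q=m(x,z)$ with $d_p^-(q)=2$, i.e.\ $q$ has exactly one neighbour in $X$ and one in $Z$. It remains to check that every $q\in M$ arises this way, which gives \ref{en:1Sphere3}.

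\emph{Main obstacle.} The hardest step is the matrix step: extracting the exact equalities $\omega_{xz}=1/2$ and $r_x=D+c-1$ from $A_\infty(p)\succeq2I$ plus the kernel vector. The row-sum identity gives only one linear relation per row, so I first intend to derive the two-sided inequalities $r_x\le D+c-1$ and $\sum_z\omega_{xz}\ge c/2$ from the counting above. Then I would sum them against the weights $a_x$, using $\sum_x(1-2\omega_{xz})a_x=0$, to turn both inequalities into equalities.
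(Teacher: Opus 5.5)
There is a genuine gap exactly where you flag the main obstacle, and the tools you list there cannot close it.

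\textbf{The missing lower bound on $R_X$.} Testing $A_\infty(p)-2I\succeq 0$ on $\one_X$ gives only the upper bound $R_X:=\sum_{x\in X}r_x\le D(D-c-1)+4W_{XZ}$, where $W_{XZ}=\sum_{x\in X,\,z\in Z}\omega_{xz}$. Testing on $\one_X+s\one_Z$ brings in the $Z$-row sums, which involve the uncontrolled $\deg z$ and $t_z$. Your kernel vector only yields weighted identities such as $\sum_x(D+c-1-r_x)a_x=0$.

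The Perron--Frobenius step cannot be run on $A_{XX}$. At this stage you do not know the sign of its off-diagonal entries $1-2\varepsilon_{xx'}-2\omega_{xx'}$. Even if you did, \Cref{lem:Z-matrix} would only return $A_{XX}\succeq 2I$, which $A_\infty(p)\succeq 2I$ already implies. So the per-vertex bound $r_x\le D+c-1$ never appears, and there is no reason to expect it a priori: by \eqref{eq:adjacent-edge}, $r_x-(D+c-1)=\frac{2}{a_x}\sum_{y\sim_K x}(a_y-a_x)$, whose sign is uncontrolled vertex by vertex.

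What you need is a matching lower bound on $R_X$. Apply \eqref{eq:adjacent-edge} to the edges $px$; their common neighbours are $S_1^K(x)$ by \Cref{lem:no-XZ}. This gives $(I+\operatorname{diag}(r_x-c)-2L_K)a=Da$. That matrix does have nonpositive off-diagonal entries, so \Cref{lem:Z-matrix} gives $R_X\ge D(D+c-1)$. Explicitly,
\[
R_X-D(D+c-1)=2\sum_{xy\in E(K)}\frac{(a_x-a_y)^2}{a_xa_y}\ge 0 .
\]
Combined with the upper bound and $W_{XZ}\le Dc/2$, every inequality becomes an equality. Then $a$ is constant along edges of $K$, so these edges are collapsed by \eqref{eq:visible-strict}, and $r_x=D+c-1$ for every $x$. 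This is the mechanism behind \ref{en:1Sphere1}. Your alternative, applying \eqref{eq:adjacent-edge} to the edge $xx'$, leaves $d_x$, $d_{x'}$ and the other common neighbours uncontrolled.

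\textbf{Parts \ref{en:1Sphere2}--\ref{en:1Sphere3}.} Your ingredients nearly work here, but you aim at the wrong inequality.
\begin{itemize}
\item The argument of \Cref{lem:no-XZ} applies to every common neighbour $q\ne p$ of $x$ and $z$, not only those below the average. If $qx$ were visible then $q\in H$, and $qz$ would either put $z$ into $H$ or force $f(q)=f(p)$.
\item Hence $\Phi(q)=\Phi(x)$ for all such $q$, and \eqref{eq:midpoint-scalar} at $x,z$ leaves exactly one of them, $m(x,z)\in M$.
\item Consequently $\omega_{xz}=1/d_p^-(m(x,z))\le\frac12$. This is an upper bound, the reverse of the $\sum_z\omega_{xz}\ge c/2$ you planned to derive, and it gives $W_{XZ}\le Dc/2$.
\item Fed into your $Z$-row identity $\sum_x(1-2\omega_{xz})a_x=0$ with all $a_x>0$, it forces $\omega_{xz}=\frac12$. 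That is, $d_p^-(m(x,z))=2$, which is \ref{en:1Sphere2}--\ref{en:1Sphere3}.
\end{itemize}
This legitimately bypasses the paper's route. The paper obtains $W_{XZ}\ge Dc/2$ from the two-sided bound on $R_X$, and separately shows $\varepsilon_X(q)=\varepsilon_Z(q)$ for $q\in M$ via \eqref{eq:midpoint-scalar} at $p,q$.

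The shortcut needs an actual proof of your kernel claim. The restriction of $a$ to $B_2(p)$ lies in the radical of the positive semidefinite form $\Gamma_2-2\Gamma$ at $p$. Hence its gradient lies in the kernel of the Schur complement, which is a positive multiple of $A_\infty(p)-2I$. Even with that, the shortcut does nothing for \ref{en:1Sphere1}, which still needs the lower bound above.
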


\begin{proof}
By \Cref{lem:no-XZ},
\begin{equation}\label{eq:degree-x}
 \deg_G(x)=1+t_x+r_x,
 \qquad
 S_1^G(p)\cap S_1^G(x)=S_1^K(x).
\end{equation}
Applying \Cref{lem:adjacent-edge} to $px$ and to the eigenfunction $f$, one obtains
\begin{equation*}
 (1+r_x+2t_x-D-c)a_x
 =2\sum_{y\sim_Kx}a_y.
\end{equation*}
Equivalently, with the Laplacian $L_K$,
\begin{equation}\label{eq:B-a}
 \left(I+\operatorname{diag}(r_x-c)-2L_K\right)a=Da.
\end{equation}
The matrix in \eqref{eq:B-a} is symmetric with nonpositive off-diagonal
entries.  By \Cref{lem:Z-matrix}, $D$ is its smallest eigenvalue. 
Thus
\begin{equation*}
    \one_X^T  \left(I+\operatorname{diag}(r_x-c)-2L_K\right)\one_X\geq D\card{X}.
\end{equation*}
This gives
\begin{equation}\label{eq:R-lower}
 R_X:=\sum_{x\in X}r_x\ge D(D+c-1).
\end{equation}

For $q\in S_2^G(p)$ put
\begin{equation*}
 \varepsilon_X(q)=|S_1^G(q)\cap X|,
 \quad
 \varepsilon_Z(q)=|S_1^G(q)\cap Z|,
 \quad
 \varepsilon(q)=\varepsilon_X(q)+\varepsilon_Z(q),
\end{equation*}
and define
\begin{equation*}
 W_{XZ}=\sum_{q\in S_2^G(p)}
 \frac{\varepsilon_X(q)\varepsilon_Z(q)}{\varepsilon(q)}.
\end{equation*}
Write $A=A^G_\infty(p)$.  For
$x\in X$, equations \eqref{eq:A-row-sum} and \eqref{eq:degree-x} give
\begin{equation}\label{eq:row-X}
 ((A-2I)\one)_x=\frac{D+c-1-r_x}{2}.
\end{equation}
For $x\in X$ and $z\in Z$, \Cref{lem:no-XZ} and
\eqref{eq:A-offdiag} give
\begin{equation*}
 A_{xz}=1-2\omega_{xz},
 \qquad
 \omega_{xz}=\sum_{\substack{q\in S_2^G(p),\\q\sim x,z}}\frac1{d_p^{-}(q)}.
\end{equation*}
Therefore
\begin{equation}\label{eq:block-XZ}
 \langle\one_X,A\one_Z\rangle=Dc-2W_{XZ}.
\end{equation}
Since $A-2I\succeq0$, equations
\eqref{eq:row-X}--\eqref{eq:block-XZ} yield
\begin{equation}\label{eq:R-upper-W}
 R_X\le D(D-c-1)+4W_{XZ}.
\end{equation}
Combining \eqref{eq:R-lower} and \eqref{eq:R-upper-W},
\begin{equation}\label{eq:W-lower}
 W_{XZ}\ge\frac{Dc}{2}.
\end{equation}

Choose $q\in M$ and fix an $X$-neighbour $x$ of $q$.  The edge $xq$ cannot be
visible.  Otherwise $q\in H$. For a $Z$-neighbour $z$, either $zq$ is visible and
then $z\in H$, or $zq$ is collapsed and then $f(q)=f(z)=f(p)$.  Both
possibilities contradict the uniqueness of the maximum $p$ on $H$.  Thus
\begin{equation}\label{eq:mixed-image-X}
 \Phi(q)=\Phi(x)
 \qquad\text{for every $X$-neighbour $x$ of a vertex $q\in M$}.
\end{equation}
Applying \eqref{eq:midpoint-scalar} to
$p,q$ gives
\begin{equation*}
    \frac{\Phi(p)+\Phi(q)}{2}=\frac{\varepsilon_X(q)\Phi(q)+\varepsilon_Z(q)\Phi(p)}{\card{S_1^G(p)\cap S_1^G(q)}}
\end{equation*}
Thus $ \varepsilon_X(q)=\varepsilon_Z(q)=:m_q
$.
Consequently,
\begin{equation}\label{eq:W-m}
 W_{XZ}=\frac12\sum_{q\in M}m_q.
\end{equation}

Fix $(x,z)\in X\times Z$.  By \Cref{lem:no-XZ}, $d_G(x,z)=2$ via $p$.
No other common neighbour lies in $S_1^G(p)$, again by \Cref{lem:no-XZ}.  Every
common neighbour $q\ne p$ is in $M$ and satisfies $\Phi(q)=\Phi(x)$ by
\eqref{eq:mixed-image-X}.  If $M_{xz}$ denotes the number of such vertices,
\Cref{lem:midpoint} gives
\[
 \frac{\Phi(x)+P}{2}
 =\frac{P+M_{xz}\Phi(x)}{1+M_{xz}}.
\]
Since $\Phi(x)\ne P$, one has $M_{xz}=1$.  Counting pairs $(x,z)$ through the
vertices $M$ gives
\begin{equation}\label{eq:Dc-m2}
 Dc=\sum_{q\in M}m_q^2.
\end{equation}
Equations \eqref{eq:W-m} and \eqref{eq:Dc-m2} imply
$W_{XZ}\le Dc/2$.  Together with \eqref{eq:W-lower}, equality holds, and every
$m_q=1$.  This proves assertions \ref{en:1Sphere2} and \ref{en:1Sphere3}.

Substituting $W_{XZ}=Dc/2$ into \eqref{eq:R-upper-W}, comparison with
\eqref{eq:R-lower} gives
\begin{equation*}
 R_X=D(D+c-1).
\end{equation*}
Thus $\one_X$ also realizes the smallest Rayleigh quotient in
\eqref{eq:B-a}, so it is an eigenvector with eigenvalue $D$.  Since
$L_K\one_X=0$, this gives \ref{en:1Sphere1}.  Substitution back into
\eqref{eq:B-a} yields $L_Ka=0$.  Hence $a$ is constant on every connected
component of $K$.  If $xy\in E(K)$, then $f(x)=f(y)$, and \eqref{eq:visible-strict} forces $xy$ to be collapsed.
\end{proof}

\subsection{Canonical decomposition of $X$}
We decompose $X$ into $r$ classes such that any two vertices in the same class have the same spectral embedding. This decomposition will play an very important role in the proof of \Cref{thm:intrinsic-Gamma}.

Define the following set
\begin{equation*}
 Y=\{q\in S_2^G(p):S_1^G(q)\cap X\ne\varnothing,
                    \ S_1^G(q)\cap Z=\varnothing\}.
\end{equation*}
For $q\in Y$, put
\begin{equation*}
 B(q)=S_1^G(q)\cap X,
 \qquad k_q=|B(q)|.
\end{equation*}

We aim to prove that $S_2^H(p)=Y$ and that $B_2^H(p)$ is isomorphic to a radius-two ball in the $D$-dimensional hypercube $H_D$.
\begin{proposition}
\label{prop:pure-incidence}
Every $x\in X$ is adjacent to exactly $D-1$ vertices of $Y$.  Every unordered
pair of distinct vertices $x,y\in X$ is contained in exactly one $B(q)$.
\end{proposition}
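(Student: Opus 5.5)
The first assertion is a direct count; the second I will set up as a pair of inequalities on $n_{xy}:=\card{\{q\in Y: x,y\in B(q)\}}$.

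\textbf{Step 1: each $x\in X$ has $D-1$ neighbours in $Y$.} By \Cref{lem:no-XZ}, every neighbour of $x$ outside $\{p\}\cup X\cup Z$ lies in $S_2^G(p)$, so it belongs to $M$ or to $Y$. By \Cref{prop:mixed-saturation}\ref{en:1Sphere2} and \ref{en:1Sphere3}, the $M$-neighbours of $x$ are exactly the vertices $m(x,z)$ with $z\in Z$. These are pairwise distinct, because each vertex of $M$ has a unique $Z$-neighbour. So $x$ has exactly $c$ neighbours in $M$. Since $r_x=D+c-1$ by \eqref{eq:r_x}, the number of $Y$-neighbours of $x$ is $D-1$. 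Double counting then gives $\sum_{q\in Y}k_q=D(D-1)$ and, for each $x$,
\[
 \sum_{y\in X\setminus\{x\}}n_{xy}=\sum_{q\in Y,\,q\sim x}(k_q-1)\geq D-1,
\]
with equality exactly when $k_q=2$ for every $q\ni x$. The second assertion, i.e.\ $n_{xy}=1$ for all pairs, is equivalent to: (a) $n_{xy}\ge1$ for every pair, and (b) $\sum_{q\in Y}k_q(k_q-1)\le D(D-1)$. Given (a) and (b), the pair count is squeezed and forces $k_q=2$ and $n_{xy}=1$.

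\textbf{Step 2: curvature matrix input.} Write $A=A_\infty^G(p)$. From \eqref{eq:row-X} with $r_x=D+c-1$, every $X$-row of $A-2I$ sums to zero. The $Z$-rows restricted to $X$ sum to $D-2\sum_x\omega_{zx}=0$, since each $m(x,z)$ has $d_p^-=2$. Hence $\one_X^T(A-2I)\one_X=-\langle\one_X,(A-2I)\one_Z\rangle=0$, and since $A-2I\succeq0$ this gives $(A-2I)\one_X=0$. This identity is a tautology once one writes $\omega_{xy}=\sum_{q\in Y,\,x,y\in B(q)}1/k_q$; vertices of $M$ do not contribute, having one $X$-neighbour. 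So it does not by itself decide anything.

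The real input will be the kernel vectors coming from eigenfunctions. Equality in $\CD(2,\infty)$ for $g\in\Etwo$ (\Cref{lem:defect-nullity}, as in \cite{LMP}) means that $v_g=(g(y)-g(p))_{y\in S_1(p)}$ lies in $\ker(A-2I)$. The $Z$-coordinates of $v_g$ vanish, so
\[
 \sum_{y\in X}A_{xy}\bigl(\Phi(y)-P\bigr)=2\bigl(\Phi(x)-P\bigr),\qquad
 \sum_{x\in X}(\Phi(x)-P)^{\otimes2}=\tfrac4{|V|}I_m
\]
by \Cref{lem:frame-constant}.

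\textbf{Step 3: proving (a) and (b).} For (a), take distinct $x,y\in X$. If $xy\in E(K)$, the edge is collapsed (\Cref{prop:mixed-saturation}\ref{en:1Sphere1}), and \eqref{eq:collapsed-average} applies to its common neighbours $p$, the common $K$-neighbours, and the $q\in Y$ with $x,y\in B(q)$. Since $f(p)>f(x)=f(y)$, some common neighbour has $f$-value below $f(x)$. This cannot be an $X$-vertex, because $a$ is constant on $K$-components, so it is a $Y$-vertex and $n_{xy}\ge1$. If $x\not\sim y$, I apply \eqref{eq:midpoint-scalar} to the generic $f$ in the same way. The common neighbours are $p$, possibly common $K$-neighbours, and the $Y$-vertices. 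Then $f(p)>(f(x)+f(y))/2$ forces a $Y$-vertex unless $X$-vertices compensate; this is handled using the $L_Ka=0$ structure from the proof of \Cref{prop:mixed-saturation}.

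For (b), the plan is to use the midpoint formula for the pair $p,q$ with $q\in Y$. Their common neighbours are exactly $B(q)$, which gives
\[
 \Phi(q)=\frac2{k_q}\sum_{x\in B(q)}\Phi(x)-P.
\]
I then insert the kernel relation of Step 2, pair it with $\Phi(x)-P$, and sum over $x$ using the frame identity. The aim is a trace identity in which $\sum_q(k_q-1)$-type and $\sum_{x,y}\omega_{xy}$-type quantities appear with opposite signs; positivity then bounds $\sum_q k_q(k_q-1)$ by $D(D-1)$.

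\textbf{Main obstacle.} I expect (b), the upper bound, to be the hardest step. Step 2 shows that the row-sum information alone is vacuous, so the inequality must come from the eigenfunction kernel vectors combined with \Cref{lem:frame-constant}. If that fails, I would try the cubic tensor identity \Cref{lem:A4-symmetry}\ref{en:TensorA1} at $p$, which involves $T(p)=\sum_{x\in X}(\Phi(x)-P)^{\otimes3}$, as an additional moment constraint.
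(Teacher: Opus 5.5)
Your first assertion and your part (a) are correct and agree with the paper. The $M$-neighbours of $x$ are the $c$ distinct vertices $m(x,z)$. In the $(x,y)$ midpoint or collapsed-average identity, every common $K$-neighbour has $f$-value exactly $\frac{f(x)+f(y)}2$, because $a$ is constant on $K$-components, so $f(p)>\frac{f(x)+f(y)}2$ forces a $Y$-vertex.

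\textbf{The gap is (b).} You only sketch it, and the sketched route cannot work with the ingredients you list. Consider a configuration in which all $v_x=P-\Phi(x)$, $x\in X$, equal a single vector $v$; this is what happens in $G_D^\star$.
\begin{itemize}
\item Every kernel vector $(g(y)-g(p))_{y\in S_1(p)}$ is then a multiple of $\one_X$. You already observed that $(A-2I)\one_X=0$ holds whatever the blocks are.
\item The frame identity at $p$ becomes the normalisation $D\,v^{\otimes2}=\frac4{|V|}I$.
\item The $(p,q)$ midpoint formula gives $\Phi(q)=P-2v$ for every value of $k_q$.
\item $T(p)=-D\,v^{\otimes3}$, and the $\omega_{xy}$-terms in your kernel relation multiply $w_x-w_y=0$.
\end{itemize}
None of these relations involves $k_q$ or $n_{xy}$. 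For instance, $D=3$ with two blocks both equal to $X$ (so $n_{xy}=2$) satisfies all of them. Hence no trace identity built from them, including your cubic fallback at $p$, can bound $\sum_q k_q(k_q-1)$. The needed information comes from the identity at the vertex $x$ with $\psi=\one_{\{y\}}$, i.e.\ the $(x,y)$ pair identity. You derived it in (a) but used it only as an inequality.

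\textbf{The missing step.} Keep both midpoint identities as exact equations and double count with weights. Put $c_q=f(p)-f(q)$.
\begin{itemize}
\item For $q\in Y$, the $(p,q)$ midpoint formula gives $\sum_{u\in B(q)}a_u=\frac{k_q}2c_q$.
\item For each pair $x\ne y$, your computation in (a), kept as an equality, gives $\sum_{q:\,x,y\in B(q)}c_q=\frac{n_{xy}+1}2(a_x+a_y)$.
\end{itemize}
Now sum the pair identity over unordered pairs. The left side becomes $\sum_{q\in Y}\binom{k_q}2c_q=\sum_q(k_q-1)\sum_{u\in B(q)}a_u=\sum_x a_x\sum_{y\neq x}n_{xy}$. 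The right side becomes $\frac12\sum_x a_x\bigl(\sum_{y\neq x}n_{xy}+D-1\bigr)$. Hence $\sum_x a_x\bigl(\sum_{y\neq x}n_{xy}-(D-1)\bigr)=0$. Since every $a_x>0$ and every bracket is $\geq0$ by (a), all $n_{xy}=1$. This is the paper's argument, and no curvature matrix is needed. In the all-equal configuration above it reduces to $2n_{xy}a=(n_{xy}+1)a$.

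\textbf{A smaller error.} You claim that $\sum_{q\in Y,\,q\sim x}(k_q-1)\geq D-1$, with equality exactly when all $k_q=2$. This presupposes $k_q\geq2$, but singleton blocks are not excluded at this stage; the paper's next proposition balances them against blocks with $k_q\geq3$. So the inequality follows only from (a), and $n_{xy}\equiv1$ alone does not force $k_q=2$. Drop that part of your conclusion; the statement only asserts $n_{xy}=1$.
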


\begin{proof}
By \eqref{eq:r_x}, $x$ has $D+c-1$ neighbours outside
$\{p\}\cup X\cup Z$.  Exactly $c$ of them are in $M$.  Every remaining neighbour lies in $S_2^G(p)$, has the $X$-neighbour
$x$, and has no $Z$-neighbour, hence belongs to $Y$.  This proves the first
assertion.

For distinct $x,y\in X$, let
\begin{equation*}
 n_{xy}=|\{q\in Y:x,y\in B(q)\}|.
\end{equation*}
We first show $n_{xy}\ge1$.  Put
\begin{equation*}
 C_{xy}^X=S_1^G(x)\cap S_1^G(y)\cap X,
 \qquad s_{xy}=|C_{xy}^X|.
\end{equation*}
A vertex in $M$ cannot be adjacent to two distinct vertices of $X$, and no
vertex of $Z$ is adjacent to $x$ or $y$, by \Cref{lem:no-XZ}.  Hence the common neighbours of $x,y$ in $G$ consist of $p$, the vertices of
$C_{xy}^X$, and the $n_{xy}$ vertices in $Y$ whose $1$-spheres contain $x,y$.
Every $u\in C_{xy}^X$ is joined to $x$ and $y$ by edges of $K$, so
\Cref{prop:mixed-saturation} gives
\begin{equation}\label{eq:Cxy-clone}
 \Phi(u)=\Phi(x)=\Phi(y)
 \qquad(u\in C_{xy}^X).
\end{equation}
In particular, $s_{xy}>0$ implies $\Phi(x)=\Phi(y)$.

Put
\begin{equation*}
 c_q=f(p)-f(q),\qquad A_{xy}=\frac{f(x)+f(y)}{2}.
\end{equation*}
We have
\begin{equation}
\label{eq:1-SphereMidPoint}
    (1+n_{xy}+s_{xy})A_{xy}=f(p)+\sum_{u\in C_{xy}^X}f(u)+\sum_{q:x,y\in B(q)}f(q).
\end{equation}
This comes from \eqref{eq:midpoint-scalar} if $xy\not\in E(G)$ or \eqref{eq:collapsed-average} if $xy\in E(G)$ is a collapsed edge.
If $s_{xy}=0$, the \eqref{eq:1-SphereMidPoint} reduces to
\begin{equation}\label{eq:pair-average-reduced}
 (n_{xy}+1)A_{xy}
 =f(p)+\sum_{q:x,y\in B(q)}f(q).
\end{equation}
If $s_{xy}>0$, together with \eqref{eq:Cxy-clone}, the \eqref{eq:1-SphereMidPoint} also reduces to \eqref{eq:pair-average-reduced}. If $n_{xy}=0$, then $A_{xy}=f(p)$. It is impossible since both $f(p)-f(x)$ and $f(p)-f(y)$ are positive.

For every $q\in Y$, applying \eqref{eq:midpoint-scalar} to $p,q$ gives
\begin{equation}\label{eq:pq-weight}
 \sum_{u\in B(q)}a_u=\frac{k_q}{2}c_q.
\end{equation}
Subtracting the right-hand side of \eqref{eq:pair-average-reduced} from
$(n_{xy}+1)f(p)$ yields
\begin{equation}\label{eq:pair-weight}
 \sum_{q:x,y\in B(q)}c_q
 =(n_{xy}+1)\bigl(f(p)-A_{xy}\bigr)
 =\frac{n_{xy}+1}{2}(a_x+a_y).
\end{equation}
Sum \eqref{eq:pair-weight} over all unordered pairs $\{x,y\}$.  Using
\eqref{eq:pq-weight}, the left-hand side becomes
\[
 \sum_{q\in Y}\binom{k_q}{2}c_q
 =\sum_{x\in X}a_x\sum_{y\ne x}n_{xy}.
\]
The right-hand side is
\[
 \frac12\sum_{x\in X}a_x
 \left(\sum_{y\ne x}n_{xy}+D-1\right).
\]
Therefore
\begin{equation*}
 \sum_{x\in X}a_x
 \left(\sum_{y\ne x}n_{xy}-(D-1)\right)=0.
\end{equation*}
Every bracket is nonnegative because every $n_{xy}\ge1$, and every $a_x>0$.
Hence all brackets vanish.  Each is the sum of $D-1$ positive integers minus
$D-1$, so every $n_{xy}=1$.
\end{proof}

For $x\in X$, define
\begin{equation*}
 v_x=P-\Phi(x)\ne0.
\end{equation*}

\begin{lemma}
\label{lem:pure-spectral-identities}
For every $q\in Y$,
\begin{equation}\label{eq:pure-average-vector}
 \Phi(q)=P-\frac2{k_q}\sum_{u\in B(q)}v_u.
\end{equation}
If $x,y\in B(q)$ are distinct, then
\begin{equation}\label{eq:pure-pair-vector}
 \Phi(q)=\Phi(x)+\Phi(y)-P=P-v_x-v_y.
\end{equation}
Consequently, if $k_q\ge3$, then all vectors $v_u$, $u\in B(q)$, are equal,
and in that case
\begin{equation}\label{eq:fan-image}
 \Phi(q)=P-2v_u\qquad(u\in B(q)).
\end{equation}
The same formula holds when $k_q=1$.
\end{lemma}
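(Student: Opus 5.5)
The plan is to read all three identities off the vector form of the midpoint and collapsed-average formulas. Since every coordinate $\phi_i$ of $\Phi$ lies in $\Etwo$, \Cref{lem:midpoint} and \Cref{lem:adjacent-edge} hold verbatim with $a$ replaced by $\Phi$.

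\emph{Step 1: formula \eqref{eq:pure-average-vector}.} Fix $q\in Y$. Then $q\in S_2^G(p)$, so $d_G(p,q)=2$. The common neighbours of $p$ and $q$ lie in $S_1^G(p)=X\cup Z$. Since $q$ has no $Z$-neighbour, they are exactly the vertices of $B(q)$, and there are $k_q\ge1$ of them. Applying \eqref{eq:midpoint-scalar} coordinatewise gives
\[
\frac{P+\Phi(q)}{2}=\frac1{k_q}\sum_{u\in B(q)}\Phi(u)=P-\frac1{k_q}\sum_{u\in B(q)}v_u .
\]
Solving for $\Phi(q)$ yields \eqref{eq:pure-average-vector}. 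For $k_q=1$, with $B(q)=\{u\}$, this already reads $\Phi(q)=P-2v_u$, which is \eqref{eq:fan-image} in that case.

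\emph{Step 2: formula \eqref{eq:pure-pair-vector}.} Let $x,y\in B(q)$ be distinct. I would redo the derivation of \eqref{eq:1-SphereMidPoint} with $\Phi$ in place of $f$, using the same case split. If $xy\notin E(G)$, then $d_G(x,y)=2$ through $p$, and \eqref{eq:midpoint-scalar} applies. If $xy\in E(G)$, it is an edge of $K$, hence collapsed by \Cref{prop:mixed-saturation}. In that case $p$ is a common neighbour, so \eqref{eq:collapsed-average} applies.

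By the analysis in the proof of \Cref{prop:pure-incidence}, the common neighbours of $x$ and $y$ are $p$, the set $C^X_{xy}$, and the $Y$-vertices $q'$ with $x,y\in B(q')$. By \Cref{prop:pure-incidence}, $n_{xy}=1$, so $q$ is the only such $Y$-vertex. If $s_{xy}>0$, then \eqref{eq:Cxy-clone} shows that every vector $\Phi(u)$, $u\in C^X_{xy}$, equals $\Phi(x)=\Phi(y)$, which is also the left-hand average. Those terms therefore cancel from both sides. In all cases we obtain the vector analogue of \eqref{eq:pair-average-reduced}:
\[
2\cdot\frac{\Phi(x)+\Phi(y)}2=P+\Phi(q).
\]
This is exactly \eqref{eq:pure-pair-vector}.

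\emph{Step 3: the case $k_q\ge3$.} Take three distinct $x,y,z\in B(q)$. Applying \eqref{eq:pure-pair-vector} to the pairs $\{x,y\}$ and $\{x,z\}$ gives $v_x+v_y=v_x+v_z$, so $v_y=v_z$. Varying the choice of the three vertices shows that all $v_u$, $u\in B(q)$, coincide. Substituting this common value into \eqref{eq:pure-average-vector} gives \eqref{eq:fan-image}.

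The only delicate point is Step 2. One must check that the cancellation of the $C^X_{xy}$ terms is valid in both the adjacent and nonadjacent cases. One must also confirm that no $M$- or $Z$-vertex is a common neighbour of $x$ and $y$; this follows from \Cref{lem:no-XZ} and \Cref{prop:mixed-saturation}\ref{en:1Sphere3}. Both facts were already established in the proof of \Cref{prop:pure-incidence}, so I expect no real obstacle.
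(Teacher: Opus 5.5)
Your proof is correct and follows essentially the same route as the paper. You apply the midpoint formula at $p,q$ to get the averaged identity. You then repeat the derivation of the pair-average identity coordinatewise with $n_{xy}=1$, including the cancellation of the $C^X_{xy}$ clone terms. Finally, when $k_q\ge 3$, you compare two pair identities that share a vertex. Your extra checks, that no $M$- or $Z$-vertex is a common neighbour of $x,y$ and that an edge $xy$ is necessarily collapsed, are exactly the facts the paper relies on implicitly.
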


\begin{proof}
The common neighbours of $p$ and $q$ are exactly the vertices of $B(q)$.
Applying \Cref{lem:midpoint} gives \eqref{eq:pure-average-vector}. Fix distinct $x,y\in B(q)$. Repeating the argument leading to \eqref{eq:pair-average-reduced}
for each function $\phi_i$ and using $n_{xy}=1$, we obtain
\eqref{eq:pure-pair-vector}.

For $k_q\geq 3$, to prove $v_x=v_y$, take $z\in B(q)$ distinct  from $x$ and $y$. Then 
\begin{equation*}
    \Phi(q)=P-v_{z}-v_{x}=P-v_{z}-v_{y}
\end{equation*}
gives $v_x=v_y$. For $k_q=1$, equation
\eqref{eq:pure-average-vector} directly gives the same formula.
\end{proof}

\begin{proposition}
\label{prop:design}
For fixed $x\in X$, let
\begin{align*}
 \mathcal D_x&=\{q\in Y:B(q)=\{x\}\},
\\
 \mathcal F_x&=\{q\in Y:x\in B(q),\ k_q\ge3\}.
\end{align*}
Then
\begin{equation}\label{eq:local-fan-singleton}
 |\mathcal D_x|
 =\sum_{q\in\mathcal F_x}(k_q-2).
\end{equation}
In particular, a singleton block exists if and only if a block of size at
least three exists.
\end{proposition}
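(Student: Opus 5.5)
The identity follows from double counting the blocks $B(q)$, $q\in Y$, that contain a fixed vertex $x\in X$. I will combine the two assertions of \Cref{prop:pure-incidence}; no spectral input beyond that proposition should be needed.

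First, put $\mathcal Y_x=\{q\in Y: x\in B(q)\}$. Since $B(q)=S_1^G(q)\cap X$, the condition $x\in B(q)$ means exactly that $q$ is a neighbour of $x$ lying in $Y$. The first assertion of \Cref{prop:pure-incidence} therefore gives
\[
 |\mathcal Y_x|=D-1.
\]

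Second, count the pairs $(y,q)$ with $y\in X\setminus\{x\}$, $q\in Y$ and $\{x,y\}\subseteq B(q)$. By the second assertion of \Cref{prop:pure-incidence}, each of the $D-1$ choices of $y$ lies together with $x$ in exactly one block. Grouping instead by $q\in\mathcal Y_x$, the block $B(q)$ supplies $k_q-1$ choices of $y$. Hence
\[
 \sum_{q\in\mathcal Y_x}(k_q-1)=D-1=|\mathcal Y_x|,
 \qquad\text{that is,}\qquad
 \sum_{q\in\mathcal Y_x}(k_q-2)=0.
\]

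Third, split this sum according to the size $k_q\ge1$:
\begin{itemize}
 \item blocks with $k_q=1$ are exactly the elements of $\mathcal D_x$, and each contributes $-1$;
 \item blocks with $k_q=2$ contribute $0$;
 \item blocks with $k_q\ge3$ are exactly the elements of $\mathcal F_x$, and each contributes $k_q-2$.
\end{itemize}
This yields $-|\mathcal D_x|+\sum_{q\in\mathcal F_x}(k_q-2)=0$, which is \eqref{eq:local-fan-singleton}. Every term $k_q-2$ with $q\in\mathcal F_x$ is positive, so the right-hand side is positive exactly when $\mathcal F_x\neq\varnothing$. Thus a singleton block at $x$ exists if and only if a block of size at least three contains $x$, and this gives the final assertion.

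The only step needing care is the first one: identifying the number of $Y$-neighbours of $x$ with $|\mathcal Y_x|$. This uses that $B(q)$ is defined as $S_1^G(q)\cap X$, so no other obstacle is expected.
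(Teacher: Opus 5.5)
Your proof is correct and follows the paper's own argument: both use the two counts from \Cref{prop:pure-incidence} to obtain $\sum_{q:\,x\in B(q)}(k_q-2)=0$, and then split this sum by block size. You prove the final assertion locally at a fixed $x$, and it gives the paper's global statement once you take $x$ in the relevant block.
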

\begin{proof}
    By \Cref{prop:pure-incidence}, it follows that
    \begin{equation*}
        \sum_{q:x\in B(q)}(k_q-1)=D-1=\card{\{q:\ x\in B(q)\}}.
    \end{equation*}
    Thus 
    \begin{equation*}
        0=\sum_{q:x\in B(q)}(k_q-2)=-\card{\mathcal{D}_x}+\sum_{q\in\mathcal{F}_x}(k_q-2).
    \end{equation*}
    This gives the equation \eqref{eq:local-fan-singleton}.
\end{proof}

\begin{proposition}
\label{prop:cubic-transfer}
Let $\mathcal{A}$ be the fourth-order tensor defined by \eqref{eq:FourthOrderTensor}. For every $x\in X$,
\begin{equation}\label{eq:A-vx}
 \cA(v_x)=-2v_x^{\otimes3}.
\end{equation}
\end{proposition}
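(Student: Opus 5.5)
The plan is to compute both sides through the vertex-level tensors $T(s)$ of \eqref{eq:ThirdOrder}. Since $\mathcal{A}$ is linear in its first argument, part \ref{en:TensorA1} of \Cref{lem:A4-symmetry} gives
\[
 \mathcal{A}(v_x)=\mathcal{A}(P)-\mathcal{A}(\Phi(x))=T(p)-T(x).
\]
So it suffices to compute $T(p)$ and $T(x)$ explicitly from the local structure established in \Cref{prop:mixed-saturation}, \Cref{prop:pure-incidence}, \Cref{lem:pure-spectral-identities} and \Cref{prop:design}. Collapsed edges contribute nothing to $T$, because for them $\Phi(u)-\Phi(s)=0$.

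First I compute $T(p)$. The neighbours of $p$ are $X\cup Z$, and every edge $pz$ with $z\in Z$ is collapsed. For $y\in X$ we have $\Phi(y)-P=-v_y$, so
\[
 T(p)=-\sum_{y\in X}v_y^{\otimes3}.
\]

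Next I compute $T(x)$ for fixed $x\in X$. By \eqref{eq:degree-x} and \eqref{eq:r_x}, the neighbours of $x$ are:
\begin{enumerate}
 \item $p$, contributing $(P-\Phi(x))^{\otimes 3}=v_x^{\otimes3}$;
 \item the $t_x$ neighbours in $K$, whose edges are collapsed by \Cref{prop:mixed-saturation}\ref{en:1Sphere1} and so contribute $0$;
 \item exactly $c$ neighbours in $M$, whose edges to $x$ are collapsed by \Cref{prop:mixed-saturation}\ref{en:1Sphere3} and so contribute $0$;
 \item exactly $D-1$ neighbours $q\in Y$ with $x\in B(q)$, by \Cref{prop:pure-incidence}.
\end{enumerate}
For the last group I use \Cref{lem:pure-spectral-identities}. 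If $k_q=2$ with $B(q)=\{x,y\}$, then $\Phi(q)-\Phi(x)=-v_y$. If $k_q=1$ or $k_q\ge3$, then \eqref{eq:fan-image} gives $\Phi(q)-\Phi(x)=-v_x$. Hence
\[
 T(x)=v_x^{\otimes3}-\sum_{q:\,B(q)=\{x,y\}}v_y^{\otimes3}-\bigl(|\mathcal D_x|+|\mathcal F_x|\bigr)v_x^{\otimes3}.
\]

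The key step, and the only real obstacle, is to identify this sum with $\sum_{y\ne x}v_y^{\otimes3}$. By \Cref{prop:pure-incidence}, each $y\ne x$ lies in exactly one block $B(q)$ together with $x$. The blocks with $k_q=2$ account for their $y$ exactly. A block $q\in\mathcal F_x$ contains $k_q-1$ vertices $y\neq x$, all with $v_y=v_x$ by \Cref{lem:pure-spectral-identities}. So these $y$ require $(k_q-1)v_x^{\otimes3}$, whereas the block itself supplies only one copy, leaving a deficit of $(k_q-2)v_x^{\otimes3}$. The singleton blocks supply the extra $|\mathcal D_x|\,v_x^{\otimes3}$, and \eqref{eq:local-fan-singleton} in \Cref{prop:design} says precisely that $|\mathcal D_x|=\sum_{q\in\mathcal F_x}(k_q-2)$. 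Therefore the deficit and the surplus cancel, and
\[
 T(x)=v_x^{\otimes3}-\sum_{y\in X\setminus\{x\}}v_y^{\otimes3}.
\]

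Subtracting, the terms with $y\neq x$ cancel:
\[
 T(p)-T(x)=-\sum_{y\in X}v_y^{\otimes3}-v_x^{\otimes3}+\sum_{y\ne x}v_y^{\otimes3}=-2v_x^{\otimes3},
\]
which is \eqref{eq:A-vx}.
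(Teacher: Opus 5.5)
Your proof is correct and follows essentially the same route as the paper: write $\mathcal{A}(v_x)=T(p)-T(x)$ using linearity and $\mathcal{A}(\Phi(s))=T(s)$, evaluate the $Y$-neighbours of $x$ with the fan and pair formulas of \Cref{lem:pure-spectral-identities}, and cancel the fan and singleton contributions with \eqref{eq:local-fan-singleton}. The only differences are bookkeeping. You use the design identity to rewrite $T(x)$ as $v_x^{\otimes3}-\sum_{y\neq x}v_y^{\otimes3}$, whereas the paper expands $T(p)$ along the block partition of $X\setminus\{x\}$. You also state explicitly that the $K$- and $M$-edges at $x$ are collapsed, which the paper leaves implicit.
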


\begin{proof}
By \eqref{eq:fan-image} and \eqref{eq:pure-average-vector}, we have
\begin{equation*}
    (\Phi(q)-\Phi(x))^{\otimes 3}=\begin{cases}
        -v_x^{\otimes 3} & \text{ for } k_q\neq 2 \text{ and } x\in B(q),\\
        -v_y^{\otimes 3} & \text{ for } B(q)=\{x,y\}.
    \end{cases}
\end{equation*}

Fix $x\in X$. The sets $B(q)\setminus\{x\}$, for $x\in B(q)$ and $k_q\ge2$, partition
$X\setminus\{x\}$. So
\begin{equation}\label{eq:T-p}
\begin{aligned}
     T(p)&=-\sum_{y\in X}v_y^{\otimes3}
     =-v_x^{\otimes 3}-\sum_{q\in\mathcal{F}_x}(k_q-1)v_x^{\otimes 3}-\sum_{q:B(q)=\{x,y\}}v_y^{\otimes 3}\\
     T(x)&=v_x^{\otimes 3}+\sum_{q:x\in B(q)}(\Phi(q)-\Phi(x))^{\otimes 3}=v_x^{\otimes 3}-\sum_{\substack{q:k_q\neq 2\\x\in B(q)}}v_x^{\otimes 3}-\sum_{q:B(q)=\{x,y\}}v_y^{\otimes 3}.
\end{aligned}
\end{equation}
Therefore
\begin{equation*}
\begin{aligned}
     T(p)-T(x)&=-2v_x^{\otimes 3}+\card{\mathcal{D}_x}v_x^{\otimes 3}-\sum_{q\in\mathcal{F}_x}(k_q-2)v_x^{\otimes 3}=-2v_x^{\otimes 3}
\end{aligned}
\end{equation*}
The last equality comes from \eqref{eq:local-fan-singleton}. Since $T=\cA\circ\Phi$,
linearity of $\cA$ proves \eqref{eq:A-vx}.
\end{proof}

\begin{theorem}[Canonical decomposition of $X$]
\label{thm:global-axes}
For $x,y\in X$, either $v_x=v_y$ or $v_x\perp v_y$.  Hence there is a
partition
\begin{equation}\label{eq:axis-classes}
 X=I_1\sqcup\cdots\sqcup I_r,
 \qquad
 v_x=v_i\quad(x\in I_i),
 \qquad
 n_i=|I_i|,
\end{equation}
where the distinct vectors $v_1,\ldots,v_r$ are pairwise orthogonal and form a
basis of $\mathbb{R}^{m}$. Every edge $ab$ of $G$ is either collapsed or satisfies
\begin{equation}\label{eq:global-axis-alignment}
 \Phi(b)-\Phi(a)=t v_i
\end{equation}
for a unique $i$ and a nonzero real number $t$.
\end{theorem}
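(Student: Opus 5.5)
The plan is to extract everything from the fourth-order tensor $\cA$, combining \Cref{prop:cubic-transfer} with the full symmetry of $\cA$ from \Cref{lem:A4-symmetry}. First, the dichotomy $v_x=v_y$ or $v_x\perp v_y$. Fix $x,y\in X$ and evaluate $\cA(v_x,v_y,v_y,v_y)$ in two ways.
\begin{enumerate}
\item Using \eqref{eq:A-vx} for $x$, it equals $-2\langle v_x,v_y\rangle^3$.
\item Using the symmetry of $\cA$ and \eqref{eq:A-vx} for $y$, it equals $\cA(v_y)(v_x,v_y,v_y)=-2\langle v_x,v_y\rangle\,|v_y|^4$.
\end{enumerate}
Exchanging the roles of $x$ and $y$ gives the same identity with $|v_x|^4$. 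Hence if $\langle v_x,v_y\rangle\neq0$, then $\langle v_x,v_y\rangle^2=|v_x|^4=|v_y|^4$. So $|v_x|=|v_y|$ and equality holds in Cauchy--Schwarz, which gives $v_y=\pm v_x$. To exclude $v_y=-v_x$, I will write $f=f_w=\langle w,\Phi\rangle$ for a suitable $w\in\R^m$; this is possible since $f\in\Etwo$. Then $a_x=\langle w,v_x\rangle>0$ for every $x\in X$, so $v_x$ and $v_y$ cannot be negatives of each other. Grouping equal vectors then yields the partition \eqref{eq:axis-classes} with pairwise orthogonal distinct $v_1,\dots,v_r$.

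For the basis statement I will use \Cref{lem:frame-constant} at $s=p$. The neighbours in $Z$ contribute nothing because the edges $pz$ are collapsed, so
\[
\sum_{i=1}^r n_i\, v_i\otimes v_i=Q(p)=\frac4{|V|}I_m .
\]
Since the $v_i$ are orthogonal and nonzero, this forces them to span $\R^m$, hence $r=m$. It also gives $n_i|v_i|^2=4/|V|$.

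For the edge alignment \eqref{eq:global-axis-alignment}, I will determine $\cA$ completely and then compare with the edge formula \eqref{eq:A4-edge}. Let $e_i=v_i/|v_i|$, which form an orthonormal basis. By \eqref{eq:A-vx} and linearity, $\cA(e_i)=-2|v_i|^2e_i^{\otimes3}$, so
\[
\cA=-2\sum_i|v_i|^2e_i^{\otimes4}.
\]
For an edge $sz$ write $\delta=\Phi(s)-\Phi(z)=\sum_i\delta_ie_i$. Evaluating \eqref{eq:A4-edge} at $(e_i,e_i,e_j,e_j)$ with $i\neq j$ gives
\[
0=\cA(e_i,e_i,e_j,e_j)=-\sum_{\text{edges}}\delta_i^2\delta_j^2 .
\]
Every summand is nonnegative, so $\delta_i\delta_j=0$ for every edge and every pair $i\ne j$. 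Hence each edge difference has at most one nonzero coordinate. A visible edge has $\delta\neq0$, so exactly one coordinate $i$ is nonzero, and $\Phi(b)-\Phi(a)=tv_i$ with $t\neq0$ and $i$ unique.

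The step that needs the most care is the first one. The sign issue has to be handled through the positivity of the $a_x$, and the symmetry of $\cA$ must be applied correctly to get the second evaluation. Once the dichotomy and the spanning property are in place, the remaining steps are mechanical consequences of $\cA$ being diagonal in the $e_i$ basis.
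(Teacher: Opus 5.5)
Your proposal is correct and follows the paper's proof essentially step by step. You use the symmetry of $\cA$ together with \eqref{eq:A-vx}, specializing to $\cA(v_x,v_y,v_y,v_y)$ and invoking Cauchy--Schwarz where the paper reads off $v_xv_x^T=v_yv_y^T$ directly; you exclude the minus sign via $a_x=\langle w,v_x\rangle>0$, get spanning from $Q(p)=\sum_i n_i v_i^{\otimes 2}$, and get alignment by feeding $\cA(e_i,e_i,e_j,e_j)=0$ into \eqref{eq:A4-edge}. Your extra observations $r=m$ and $n_i|v_i|^2=4/|V|$ are correct byproducts.
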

\begin{proof}
For arbitrary $z,w\in\mathbb{R}^{m}$, by $\TensorA{v_x}{v_y}{z}{w}=\TensorA{v_y}{v_x}{z}{w}$ and
\eqref{eq:A-vx}, we have
\begin{equation*}
 \langle v_x,v_y\rangle
 \langle v_x,z\rangle\langle v_x,w\rangle
 =\langle v_x,v_y\rangle
 \langle v_y,z\rangle\langle v_y,w\rangle.
\end{equation*}
If $\langle v_x,v_y\rangle\ne0$, then
$v_xv_x^T=v_yv_y^T$, so $v_y=\pm v_x$. We expand $f$ in the orthonormal basis $\{\phi_i\}$ of $E_2$ and get
\begin{equation*}
    f(s)=\sum_{i=1}^{m}u_i\phi_i(s)=\langle u_f,\Phi(s)\rangle.
\end{equation*}
where $u_f=(u_i)^T$. Then
\begin{equation*}
    f(p)-f(x)=\langle u_f,\Phi(p)-\Phi(x)\rangle=\langle u_f,  v_x\rangle>0
\end{equation*}
for all $x\in X$. This excludes the minus sign. So $v_x=v_y$ or $v_x\perp v_y$.

At $p$,
\begin{equation}\label{eq:Q-p-classes}
 Q(p)=\sum_{i=1}^rn_iv_i^{\otimes 2}.
\end{equation}
By \eqref{eq:frame-global}, $Q(p)$ is positive definite on $\mathbb{R}^{m}$, so the
orthogonal vectors $v_i$ span $\mathbb{R}^{m}$ and form a basis.   

For $i\ne j$, \eqref{eq:A-vx} gives
$\cA(v_i,v_i,v_j,v_j)=0$.  Equation \eqref{eq:A4-edge} becomes
\begin{equation*}
 0=-\sum_{a\sim b}
 \langle v_i,\Phi(b)-\Phi(a)\rangle^2
 \langle v_j,\Phi(b)-\Phi(a)\rangle^2.
\end{equation*}
Every summand is nonpositive, so each $\Phi(b)-\Phi(a)$ has nonzero projection
onto at most one axis.  Since the $v_i$ form a basis, this is exactly
\eqref{eq:global-axis-alignment}.
\end{proof}

\section{Graph bundles arising from Lichnerowicz-sharp graphs}
Let $\hat{G}$ be the graph with vertex set $V(G)$ and edge set
\begin{equation*}
 E(\hat G)=E(G_{\vis})\cup
 \{xy\text{ is collapsed}:x\text{ and }y\text{ lie in distinct connected components of }G_{\vis}\}.
\end{equation*}
Let $G^\prime$ be the graph whose vertex set is the set of connected components
of $G_{\vis}$, with two components adjacent if and only if they are adjacent in
$G$. In this section, we prove the main result \Cref{thm:BundleStructure}.

\subsection{Structures of visible graphs}

In this subsection, we characterize the structure of $H$. For $k\ge0$, put $ L_k=S_k^H(p)
$. For $x\in L_k$, define
\begin{equation*}
 \Par(x)=S_1^H(x)\cap L_{k-1},
 \qquad
 \Ch(x)=S_1^H(x)\cap L_{k+1},
\end{equation*}
where $L_{-1}=\varnothing$.  

\begin{theorem}
\label{thm:intrinsic-Gamma}
Let $G$ be a Lichnerowicz-sharp graph.  Let $H$ be a connected component
of $\Gvis$, let $f\in\Etwo$ satisfy \eqref{eq:visible-strict}, and let $p$ be the
unique maximum of $f|_H$ and
$D=|S_1^H(p)|$. The following assertions hold for every integer
$0\le k\le D$.

\begin{enumerate}

 \item\label{item:mu-vector}
For every $x\in L_k$, there is a unique vector $\mu(x)=(\mu_1(x),\ldots,\mu_r(x))\in\N^r,\ 0\leq \mu_i\leq n_i$ such that
 \begin{equation*}
  \qquad
  \sum_{i=1}^r\mu_i(x)=k,\qquad \Phi(x)=P-\sum_{i=1}^r\mu_i(x)v_i.
 \end{equation*}

 \item\label{item:unit-structure}
There are exactly $\mu_i(x)$ vertices $y$ in $\Par(x)$ with $\Phi(y)-\Phi(x)=v_i$ and $n_i-\mu_i(x)$ vertices $z$ in $\Ch(x)$  with $\Phi(z)-\Phi(x)=-v_i$.  Every other edge of $G$ at
 $x$ is collapsed.  Consequently,
 \begin{equation}\label{eq:parent-child-count}
  |\Par(x)|=k,
  \qquad
  |\Ch(x)|=D-k,
  \qquad
  \deg_H(x)=D.
 \end{equation}
 In particular, $H$ has no visible edge internal to $L_k$.

 \item\label{item:balance}
 For distinct $x,y\in L_k$,
 \begin{equation}\label{eq:upper-lower-balance}
  |\Par(x)\cap\Par(y)|
  =|\Ch(x)\cap\Ch(y)|\in\{0,1\}.
 \end{equation}
\end{enumerate}
\end{theorem}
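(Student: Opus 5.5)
The proof will be an induction on $k$. The base cases $k=0$ and $k=1$ are already available. For $k=0$ take $\mu(p)=0$. For $k=1$, each $x\in X$ lies in a unique class $I_i$, so $\mu(x)=e_i$ and $\Phi(x)=P-v_i$. \Cref{prop:mixed-saturation} shows that every edge at $x$ other than $px$ and the edges to $Y$ is collapsed. \Cref{prop:pure-incidence} and \Cref{lem:pure-spectral-identities} then describe the $D-1$ children of $x$. A child $q$ with $B(q)=\{x,y\}$ and $v_y=v_j\neq v_i$ has $\Phi(q)=P-v_i-v_j$. A child with $k_q\neq 2$ has $\Phi(q)=P-2v_i$, and such children occur only if $n_i\ge 2$; \Cref{prop:design} controls their number. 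Counting with \Cref{prop:pure-incidence} gives exactly $n_i-1$ children in direction $-v_i$ and $n_j$ children in direction $-v_j$. Uniqueness of $\mu$ is automatic, because the $v_i$ form an orthogonal basis by \Cref{thm:global-axes}.

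For the inductive step, assume \ref{item:mu-vector}--\ref{item:balance} hold through $L_k$, and take $w\in L_{k+1}$. Every parent $x$ of $w$ satisfies $\Phi(w)-\Phi(x)=-v_i$ for some $i$ with $\mu_i(x)<n_i$. The first task is to show that $\mu(w)=\mu(x)+e_i$ is independent of the parent chosen and that $|\Par(w)|=k+1$. I would argue through the moment identities. Evaluate $Q(w)=\frac4{|V|}I_m$ (\Cref{lem:frame-constant}) in the direction $v_i$. Then evaluate $T(w)=\cA(\Phi(w))$, using \Cref{lem:A4-symmetry} and \eqref{eq:A-vx}, which give $\cA(u)(v_i,v_i,v_i)=-2\langle u,v_i\rangle|v_i|^6$. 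Because every edge at $w$ is axis-aligned by \eqref{eq:global-axis-alignment}, these evaluations become two equations in the counts of neighbours in directions $\pm v_i$ and their step lengths:
\[
\#\{+\}+\#\{-\}=n_i\ \text{(weighted by step lengths)},\qquad \#\{+\}-\#\{-\}=\ldots
\]
Together they pin down $\#\{+v_i\text{-neighbours}\}=\mu_i(w)$ and $\#\{-v_i\}=n_i-\mu_i(w)$, provided every step has length exactly $|v_i|$. To force unit steps, I would apply the midpoint formula \eqref{eq:midpoint-scalar} to a pair consisting of a grandparent and $w$. The induction hypothesis \ref{item:balance} on $L_k$ supplies common parents, and averaging over them constrains $\Phi(w)$. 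The edge count between $L_k$ and $L_{k+1}$ gives the complementary inequality: $\sum_{x\in L_k}|\Ch(x)|=\binom{D}{k}(D-k)$ against $\sum_{w}|\Par(w)|$. Feeding this into $\sum_w\Gamma$-type counts forces equality, so every $w$ has exactly $k+1$ parents, each with unit step.

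For \ref{item:balance} at level $k+1$, take distinct $x,y\in L_{k+1}$ with a common parent $u\in L_k$. If $x,y$ lie in different directions $-v_i,-v_j$ from $u$, the midpoint identity applied to $x,y$ (they are at distance $2$, since there are no intra-layer visible edges) forces a common child with $\Phi=\Phi(u)-v_i-v_j$. The hypothesis $|\Par\cap\Par|\le1$ on $L_k$ then rules out a second common parent, because two common parents $u,u'$ would force a square structure one level down contradicting uniqueness. The same directions, run upward, give the reverse inclusion, and this yields \eqref{eq:upper-lower-balance}. If $x,y$ lie in the same direction $v_i$ from $u$, they have no common child, and I would show they have no other common parent. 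Finally, $|L_k|=\binom Dk$ and termination at $k=D$ follow from the parent/child counts by double counting: $|L_{k+1}|(k+1)=|L_k|(D-k)$.

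The hardest step will be showing that each $w\in L_{k+1}$ has exactly $k+1$ parents and that all its visible edges have unit length. In the irregular setting this cannot come from exact squares. My first attempt would combine the second-moment identity $Q(w)=Q(p)$, the third-moment identity $T(w)=\cA(\Phi(w))$, and an upper bound on $|\Par(w)|$ from the midpoint formula. Equality would then be forced via the global edge count between consecutive layers, in the same way \Cref{prop:pure-incidence} used positivity of the $a_x$.
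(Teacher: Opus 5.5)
\textbf{The inductive step is missing its key inequality.} Your overall architecture (moment identities at $w$, then a double count between $L_k$ and $L_{k+1}$) matches the paper's, but the inequality that closes the argument is missing, and the substitute you propose would not supply it.

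Write $\Phi(\ell)-\Phi(w)=t_\ell v_i$ for the visible neighbours $\ell\notin L_k$ of $w$. Write $p_i(w)$ for the number of parents of $w$ in direction $+v_i$; parents are automatically unit steps by (ii) on $L_k$. The two moment identities are $p_i(w)+\sum t_\ell^2=n_i$ and $p_i(w)+\sum t_\ell^3=2\mu_i(w)-n_i$. Adding them gives
\[
\sum_{\ell} t_\ell^2(1+t_\ell)=2\bigl(\mu_i(w)-p_i(w)\bigr).
\]
The missing input is the one-sided bound $t_\ell\ge-1$ (\Cref{lem:residual-bound}). It makes the left side nonnegative, hence $p_i(w)\le\mu_i(w)$ and $|\Par(w)|\le k+1$. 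The paper obtains it by applying the midpoint formula to a \emph{parent} $x$ of $w$ and the non-parent neighbour $u$, using that every edge at $x$ is a unit step or collapsed.

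Your pair (grandparent $g$, $w$) cannot do this job. Its common neighbours are essentially parents of $w$ lying below $g$, so that identity says nothing about the $t_\ell$ of the non-parent neighbours of $w$, which need not be adjacent to $g$. Nor can unit steps be established first. In the paper they are an output: they come from $\sum_\ell t_\ell^2(1+t_\ell)=0$ once $p_i(w)=\mu_i(w)$ is known.

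Finally, put $h_w=|\Par(w)|$. The single count $\sum_w h_w=|L_k|(D-k)$ together with $h_w\le k+1$ cannot force equality, because $|L_{k+1}|$ is unknown. You also need the count of unordered pairs in $L_k$ with a common child, $\sum_w\binom{h_w}{2}=\tfrac12|L_k|k(D-k)$. This is exactly where (iii) on $L_k$ enters. Then $\binom{h_w}{2}\le\tfrac k2 h_w$ forces $h_w=k+1$.

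\textbf{A false step in (iii).} Suppose distinct $x,y\in L_{k+1}$ share a parent $u$ with $\Phi(x)=\Phi(y)=\Phi(u)-v_i$. You claim they have no common child, but they \emph{do} have one. Every common neighbour $z$ is then a common parent at $\Phi(x)+v_a$, a common child at $\Phi(x)-v_b$, or collapsed to both. The identity $\sum_z\bigl(2\Phi(z)-\Phi(x)-\Phi(y)\bigr)=0$ holds, from the midpoint formula, or from \eqref{eq:collapsed-average} if $xy$ is a collapsed edge. It gives equally many common parents and common children in each direction $v_a$. Since $u$ is a common parent in direction $v_i$, there is a common child at $\Phi(x)-v_i$. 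For instance, $a_1,a_2$ and $e_{12}$ in \Cref{ex:four-dimensional-star-twist} show this.

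Your claim would give $|\Par(x)\cap\Par(y)|=1\neq0=|\Ch(x)\cap\Ch(y)|$, contradicting the statement you are proving. The correct route is the direction-by-direction balance of \Cref{lem:translated-balance}, in both cases $\Phi(x)=\Phi(y)$ and $\Phi(x)\neq\Phi(y)$. Combine it with the bound $|\Par(x)\cap\Par(y)|\le1$, which follows from $|\Ch(u)\cap\Ch(u')|\le1$ on $L_k$; that part you have essentially right.
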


Before the proof of \Cref{thm:intrinsic-Gamma}, we first prove two local lemmas.

\begin{lemma}
\label{lem:translated-balance}
Suppose assertions \ref{item:mu-vector} and
\ref{item:unit-structure} of
\Cref{thm:intrinsic-Gamma} hold on $L_k$.  For two distinct
vertices $x,y\in L_k$, 
\begin{equation}\label{eq:translated-balance}
 |\Par(x)\cap\Par(y)|=|\Ch(x)\cap\Ch(y)|.
\end{equation}
\end{lemma}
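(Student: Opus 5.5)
The plan is to apply the midpoint formula \eqref{eq:midpoint-scalar}, or the collapsed-edge average \eqref{eq:collapsed-average}, to the pair $x,y$. We use a \emph{level function} that equals $k$ on $L_k$ and shifts by exactly $\pm1$ along visible edges at $x$ and $y$.

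\textbf{Step 1: the level function.} By \Cref{thm:global-axes}, the vectors $v_1,\ldots,v_r$ form a basis of $\R^m$. Hence there is a unique $u\in\R^m$ with $\langle u,v_i\rangle=1$ for all $i$. Put $g(s)=\langle u,P-\Phi(s)\rangle$. Then $g$ is a constant minus the function $f_u\in\Etwo$. Both \eqref{eq:midpoint-scalar} and \eqref{eq:collapsed-average} are identities between averages, so they are invariant under $a\mapsto C-a$. Hence both remain valid for $g$. By assertion \ref{item:mu-vector}, every $s\in L_k$ satisfies $g(s)=\sum_i\mu_i(s)=k$.

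\textbf{Step 2: classify the common neighbours of $x$ and $y$.} Let $z\in S_1^G(x)\cap S_1^G(y)$. By assertion \ref{item:unit-structure}, applied at $x$, exactly one of the following holds:
\begin{itemize}
\item $z\in\Par(x)$, so $\Phi(z)-\Phi(x)=v_i$ and $g(z)=k-1$;
\item $z\in\Ch(x)$, so $\Phi(z)-\Phi(x)=-v_i$ and $g(z)=k+1$;
\item $xz$ is collapsed, so $g(z)=k$.
\end{itemize}
The same trichotomy holds with respect to $y$. Comparing the two values of $g(z)$ shows the types must agree:
\begin{itemize}
\item if $g(z)=k-1$, then $z\in\Par(x)\cap\Par(y)$;
\item if $g(z)=k+1$, then $z\in\Ch(x)\cap\Ch(y)$;
\item if $g(z)=k$, then both $xz$ and $yz$ are collapsed.
\end{itemize}
Conversely, every common parent or common child is a common neighbour. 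Write $\alpha=|\Par(x)\cap\Par(y)|$, $\beta=|\Ch(x)\cap\Ch(y)|$, and let $\gamma$ be the number of common neighbours of the third kind.

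\textbf{Step 3: apply the averaging identity.} There are three cases.
\begin{itemize}
\item If $d_G(x,y)\ge3$, there are no common neighbours, and $\alpha=\beta=0$.
\item If $d_G(x,y)=2$, \Cref{lem:midpoint} applied to $g$ gives
\[
(k-1)\alpha+(k+1)\beta+k\gamma=\tfrac{g(x)+g(y)}{2}(\alpha+\beta+\gamma)=k(\alpha+\beta+\gamma).
\]
\item If $x\sim y$, assertion \ref{item:unit-structure} shows that $xy$ is not visible, since $H$ has no visible edge inside $L_k$. So $xy$ is collapsed and $g(x)=g(y)$. If there are no common neighbours, then $\alpha=\beta=0$. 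Otherwise \eqref{eq:collapsed-average} yields the same displayed identity.
\end{itemize}
In each remaining case the identity simplifies to $\beta-\alpha=0$, which is \eqref{eq:translated-balance}.

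The main point to get right is Step 2: excluding mixed-type common neighbours, for example a vertex that is a parent of $x$ but joined to $y$ by a collapsed edge. This is exactly where the level function is used. Every visible edge at a vertex of $L_k$ changes $g$ by exactly $\pm1$, and every collapsed edge preserves $g$. These facts rely only on assertions \ref{item:mu-vector} and \ref{item:unit-structure} on $L_k$ and on the axis alignment \eqref{eq:global-axis-alignment}.
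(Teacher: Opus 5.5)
Your proof is correct and is essentially the paper's argument. Both apply the midpoint/collapsed-average identity at $(x,y)$ and exclude mixed-type common neighbours via the coefficient sum $\sum_i\mu_i$, which is exactly your level function $g$; pairing the paper's vector identity $\sum_z\bigl(2\Phi(z)-\Phi(x)-\Phi(y)\bigr)=0$ with your dual vector $u$ gives your scalar identity directly, so your version just avoids the paper's preliminary choice of a common parent or child (to write $\Phi(y)=\Phi(x)+v_i-v_j$) and its $i=j$ versus $i\neq j$ case split.
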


\begin{proof}
If $|\Par(x)\cap\Par(y)|=|\Ch(x)\cap\Ch(y)|=0$, there is nothing to prove. First  assume that $\card{\Par(x)\cap \Par(y)}\neq 0$. Take $z\in \Par(x)\cap \Par(y)$. Then \ref{item:unit-structure} of \Cref{thm:intrinsic-Gamma} gives
\begin{equation*}
    \Phi(z)=\Phi(x)+v_i=\Phi(y)+v_j,
\end{equation*}
for some $1\leq i,j\leq r$. Thus
\begin{equation*}
    \Phi(y)=\Phi(x)+v_i-v_j.
\end{equation*}

Next, we claim for any $z\in S_1^G(x)\cap S_1^G(y)$, both $xz$ and $yz$ are either visible or collapsed. If $\Phi(x)=\Phi(y)$, this claim obviously holds. If $\Phi(x)\neq \Phi(y)$, since every edge internal to $L_k$ is collapsed, $x$ is not adjacent to $y$, $v_i-v_j\neq 0$ and $z\not\in L_k$. Assume $z\in L_{k-1}$ and $xz$ is visible, but $yz$ is collapsed. Then
\begin{equation*}
    \Phi(z)=\Phi(x)+v_\ell=\Phi(y).
\end{equation*}
Then
\begin{equation*}
    \sum_{i=1}^r\mu_i(y)=\sum_{i=1}^r\mu_i(x)-1=k-1,
\end{equation*}
a contradiction. The same argument proves the claim holds in the case $z\in L_{k+1}$.

Let
\begin{equation*}
\begin{aligned}
        A_k&=\card{\{z\in \Par(x)\cap \Par(y):\ \Phi(z)-\Phi(x)=v_k\}},\\
        B_k&=\card{\{z\in \Ch(x)\cap \Ch(y):\ \Phi(z)-\Phi(x)=-v_k\}}.
\end{aligned}
\end{equation*}
We have
\begin{equation*}
\begin{aligned}
      0&=\sum_{\substack{z\sim x,\\z\sim y}}(2\Phi(z)-\Phi(x)-\Phi(y))\\
      &=\sum_{k=1}^rA_k(2v_k-v_i+v_j)+\sum_{k=1}^rB_k(-2v_k-v_i+v_j)\\
      &=2\sum_{k=1}^r(A_k-B_k)v_k-\sum_{k=1}^r(A_k+B_k)(v_i-v_j),
\end{aligned}
\end{equation*}
which comes from \eqref{eq:midpoint-scalar} if $d_G(x,y)=2$ or \eqref{eq:collapsed-average} if $xy$ is a collapsed edge. Thus $A_k=B_k$ for $k\neq i,j$. If $i\neq j$, we have 
\begin{equation*}
    A_i-B_i=\frac{1}{2}\sum_{k=1}^r(A_k+B_k),\quad A_j-B_j=-\frac{1}{2}\sum_{k=1}^r(A_k+B_k).
\end{equation*}
If $i=j$, we directly have $A_i=B_i$.
Thus
\begin{equation*}
    \card{\Par(x)\cap\Par(y)}-\card{\Ch(x)\cap\Ch(y)}=\sum_{k=1}^r(A_k-B_k)=0.
\end{equation*}
Now assume
\(\Ch(x)\cap\Ch(y)\ne\varnothing\). Choosing a common
child and reversing the signs in the preceding argument gives the same
conclusion.
\end{proof}

\begin{lemma}
\label{lem:residual-bound}
Assume the conclusions of \Cref{thm:intrinsic-Gamma} through layer $L_k$, and
let $w\in L_{k+1}$.  Let $u$ be a visible neighbour of $w$ which is not in $\Par(w)$.  If $ \Phi(u)-\Phi(w)=t v_i$, then $t\geq -1$.
\end{lemma}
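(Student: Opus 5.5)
The plan is to compare $u$ with a parent of $w$ rather than with $w$ itself, because every edge at a vertex of $L_k$ is already described by the induction hypothesis. Fix $x\in\Par(w)$; such a vertex exists since $w\in L_{k+1}=S_{k+1}^H(p)$. By \Cref{thm:intrinsic-Gamma}\ref{item:unit-structure} applied at $x\in L_k$, the visible edge $xw$ satisfies $\Phi(w)=\Phi(x)-v_j$ for some $j$. Hence
\[
 \Phi(u)-\Phi(x)=-v_j+t v_i .
\]
Since $u\notin\Par(w)$ and $u$ is an $H$-neighbour of $w$, we have $u\notin L_k$, and in particular $u\ne x$. Throughout we work in the orthogonal basis $v_1,\ldots,v_r$ of $\mathbb{R}^m$ given by \Cref{thm:global-axes}, and read off coordinates with respect to it.

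First I handle the case $x\sim_G u$. The edge $xu$ cannot be visible: otherwise $u\in S_1^H(x)\setminus L_k$ forces $u\in\Ch(x)$, so $\Phi(u)=\Phi(x)-v_l$ and $\Phi(u)-\Phi(w)=v_j-v_l$. This vector must be a nonzero multiple of a single axis by \eqref{eq:global-axis-alignment}, which forces $l=j$ and hence $t=0$, contradicting the visibility of $wu$. So $xu$ is collapsed, $\Phi(u)=\Phi(x)$, and $tv_i=v_j$, which gives $t=1\ge-1$.

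In the remaining case $d_G(x,u)=2$, with $w$ as a common neighbour. I apply the midpoint formula \eqref{eq:midpoint-scalar} to every coordinate function $\phi_i$, which gives
\[
 \Phi(u)-\Phi(x)=\frac{2}{|S_1^G(x)\cap S_1^G(u)|}\sum_{z\in S_1^G(x)\cap S_1^G(u)}\bigl(\Phi(z)-\Phi(x)\bigr).
\]
By \ref{item:unit-structure} at $x$, every neighbour $z$ of $x$ satisfies $\Phi(z)-\Phi(x)\in\{0\}\cup\{\pm v_l\}$. Consequently the average above is a vector $\sum_l c_lv_l$ with $\sum_l|c_l|\le1$. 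Comparing coordinates in the basis $(v_l)$ gives $-v_j+tv_i=2\sum_l c_lv_l$.
\begin{itemize}
 \item If $i=j$, the $v_i$-coordinate $t-1=2c_i$ satisfies $t-1\ge-2$, that is, $t\ge-1$.
 \item If $i\ne j$, then $|{-1}|+|t|=2\sum_l|c_l|\le2$, so $|t|\le1$, and in particular $t\ge-1$.
\end{itemize}

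The main point requiring care is that the induction hypothesis really controls \emph{all} $G$-edges at $x$, including collapsed ones. This is exactly the clause ``every other edge of $G$ at $x$ is collapsed'' in \ref{item:unit-structure}, so the convex-hull bound applies to every common neighbour $z$ without further case analysis. The only other step to double-check is that $u\notin L_k$; this holds because $\Par(w)$ is by definition the whole intersection $S_1^H(w)\cap L_k$.
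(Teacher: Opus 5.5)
Your proposal is correct and follows the paper's proof. Like the paper, you fix a parent $x$ of $w$, split into the cases $x\sim_G u$ and $d_G(x,u)=2$, and in the second case apply the midpoint formula using that every displacement $\Phi(z)-\Phi(x)$ at $x$ lies in $\{0,\pm v_l\}$. Your $\ell^1$ bound on the averaged displacement is a compact repackaging of the paper's counts $M_k,P_k$. Your exclusion of a visible edge $xu$ is slightly sharper than the paper's "$t=0$"; it implicitly uses $u\notin L_{k-1}$, which follows from $u\sim_H w$.
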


\begin{proof}
Choose a parent $x\in\Par(w)$. Put $\Phi(x)=\Phi(w)+v_j$. 

When $xz$ is collapsed, $\Phi(x)=\Phi(z)$. 
When $xz$ is visible, $\Phi(z)-\Phi(x)=v_{j(z)}$ for $z\in L_{k-1}$ and $\Phi(z)-\Phi(x)=-v_{j(z)}$ for $z\in L_{k+1}$. Thus
\begin{equation*}
    \Phi(z)=\Phi(x)+s_zv_{j(z)},
\end{equation*}
where $s_z\in \{-1,0,1\}$. 

Assume $x\sim u$. Since $u$ is adjacent to $w$, $u\not\in L_{k-1}$ and $s_u\neq 1$.  We have
\begin{equation*}
    \Phi(u)-\Phi(w)=v_j+s_uv_{j(u)}=tv_i.
\end{equation*}
Thus $t=1$ if $s_u=0$ or $t=0$ if $s_u=-1$.

Assume $x$ is not adjacent to $u$. Then $d_G(x,u)=2$.  Put $C_{xu}=\card{S_1^G(x)\cap S_1^G(u)}$ and
\begin{equation*}
        \begin{aligned}
                    M_k&=\card{\{z\in S_1^G(x)\cap S_1^G(u):\ \Phi(z)-\Phi(x)=-v_k\}},\\
                P_k&=\card{\{z\in S_1^G(x)\cap S_1^G(u):\ \Phi(z)-\Phi(x)=v_k\}}.
        \end{aligned}
\end{equation*}
Applying \eqref{eq:midpoint-scalar} to $x,u$ gives
\begin{equation*}
    \begin{aligned}
        0&=\sum_{\substack{z\sim u,\\z\sim x}}(2\Phi(z)-\Phi(x)-\Phi(u))\\
        &=\sum_{\substack{z\sim u,\\z\sim x}}(v_j-tv_i+2s_zv_{j(z)})\\
        &=C_{xu}(v_j-tv_i)-2\sum_{k=1}^r(M_k-P_k)v_k.
    \end{aligned}
\end{equation*}
Thus $M_k=P_k$ for $k\neq i,j$. If $i\neq j$ 
\begin{equation*}
    C_{xu}(v_j-tv_i)-2(M_i-P_i)v_i-2(M_j-P_j)v_j=0.
\end{equation*}
Thus we have
\begin{equation*}
     t=-\frac{2(M_i-P_i)}{C_{xu}},\qquad M_j-P_j=\frac{C_{xu}}{2}.   
\end{equation*}
Then $M_{i}\leq C_{xu}-M_j\leq C_{xu}/2$. This gives $t\geq -1$. If $i=j$, then 
\begin{equation*}
        C_{xu}(1-t)v_i-2(M_i-P_i)v_i=0.
\end{equation*}
So $t=1-\frac{2(M_i-P_i)}{C_{xu}}\geq -1$. 

\end{proof}
\begin{proof}[Proof of \Cref{thm:intrinsic-Gamma}]
We prove the assertions by induction on $k$. For $L_0=\{p\}$, set $\mu(p)=0$. There exist exactly $n_i$ neighbours $x$ of $p$ with $\Phi(x)-\Phi(p)=-v_i$ and $\sum_i n_i=D$.  Thus all assertions at $k=0$ are immediate.

Fix $0\le k<D$ and assume all assertions have been proved through $L_k$.  We
prove them for $L_{k+1}$.

\smallskip

\smallskip
\noindent\textbf{Step 1:} We first prove \ref{item:mu-vector}.
Choose a parent $x\in\Par(w)$.  Then for
some $i$,
\[
 \Phi(w)=\Phi(x)-v_i.
\]
Define
\begin{equation}\label{eq:mu-w-definition}
 \mu(w)=\mu(x)+e_i.
\end{equation}
Because the $v_i$ form a basis, the vector in
\eqref{eq:mu-w-definition} is independent of the chosen parent.  Hence
\begin{equation}\label{eq:mu-w-sum}
 \Phi(w)=P-\sum_i\mu_i(w)v_i,
 \qquad
 \sum_i\mu_i(w)=k+1.
\end{equation}
Let $p_i(w)$ be the number of parents $x$ of $w$ with $\Phi(x)-\Phi(w)=v_i$.
Then the number $h_w$ of parents of $w$ is 
\begin{equation}\label{eq:h-sum-p}
 h_w=\sum_i p_i(w).
\end{equation}

\smallskip
\noindent\textbf{Step 2:} we prove $h_w= k+1$. By \eqref{eq:global-axis-alignment}, for any visible edge $\ell w$,
\begin{equation*}
    \Phi(\ell)-\Phi(w)=t_\ell v_i
\end{equation*}
for some $1\leq i\leq r$. Let 
\begin{equation*}
    \mathcal{R}_i(w)=\{\ell\sim_H w:\ \Phi(\ell)-\Phi(w)=t_\ell v_i,\ \ell\not\in L_k\}.
\end{equation*}
By \Cref{lem:residual-bound}, for any $\ell\in \mathcal{R}_i(w)$, $t_\ell\geq -1$. We have
\begin{equation*}
    \begin{aligned}
        Q(w)&=\sum_{s\sim w}(\Phi(s)-\Phi(w))^{\otimes 2}=\sum_{i=1}^r\left(p_i(w)+\sum_{\ell\in \mathcal{R}_i(w)}t^2_\ell\right)v_i^{\otimes 2},\\
        \mathcal{A}(\Phi(w))&=\sum_{s\sim w}(\Phi(s)-\Phi(w))^{\otimes 3}=\sum_{i=1}^r\left(p_i(w)+\sum_{\ell\in \mathcal{R}_i(w)}t^3_\ell\right)v_i^{\otimes 3}.
    \end{aligned}
\end{equation*}
On the other hand, by \Cref{lem:frame-constant} and \eqref{eq:A-vx},
\begin{equation*}
    \begin{aligned}
        Q(w)&=Q(p)=\sum_{i=1}^rn_iv_i^{\otimes 2},\\
        \mathcal{A}(\Phi(w))&=\mathcal{A}(P)-\sum_{i=1}^r\mu_i(w)\mathcal{A}(v_i)=\sum_{i=1}^r(2\mu_i(w)-n_i)v_i^{\otimes 3}.
    \end{aligned}
\end{equation*}
Comparing the coefficients of $v_i^{\otimes 2}$ and $v_i^{\otimes 3}$ gives
\begin{align}
 p_i(w)+\sum_{\ell\in \mathcal{R}_i(w)} t_{\ell}^2&=n_i,
 \label{eq:w-second-moment}\\
 p_i(w)+\sum_{\ell\in \mathcal{R}_i(w)} t_{\ell}^3&=2\mu_i(w)-n_i.
 \notag
\end{align}
Adding the two identities of \eqref{eq:w-second-moment} gives,
\begin{equation}\label{eq:w-moment-sum}
 \sum_{\ell\in \mathcal{R}_i(w)} t_{\ell}^2(1+t_{\ell})
 =2\bigl(\mu_i(w)-p_i(w)\bigr).
\end{equation}
Every term on the left is nonnegative.  Hence
\begin{equation}\label{eq:p-le-mu}
 p_i(w)\le\mu_i(w).
\end{equation}
Summing over $i$ and using \eqref{eq:mu-w-sum}, $h_w\leq k+1$.

Count visible edges between $L_k$ and $L_{k+1}$.  Every $x\in L_k$ has
$D-k$ children, so
\begin{equation}\label{eq:edge-layer-count}
 \sum_{w\in L_{k+1}}h_w=|L_k|(D-k).
\end{equation}
Next, count pairs of vertices in $L_k$ with a common child in $L_{k+1}$.  Since two vertices in $L_k$ have a common child if and only if they have a common parent, thus the number is $\frac12|L_k|k(D-k)$. On another hand, for $w\in L_{k+1}$, the number of pairs of parents of $w$ is $\binom{h_w}{2}$.  Summing over $w\in L_{k+1}$ gives the total number of pairs of vertices in $L_k$ with a common child in $L_{k+1}$.  Hence
\begin{equation}\label{eq:Gamma-edge-count}
 \sum_{w\in L_{k+1}}\binom{h_w}{2}=\frac12|L_k|k(D-k).
\end{equation}
Since $1\le h_w\le k+1$,
\begin{equation}\label{eq:binomial-inequality}
 \binom{h_w}{2}\le\frac{k}{2}h_w.
\end{equation}
After summing, equations \eqref{eq:edge-layer-count} and
\eqref{eq:Gamma-edge-count} show that equality must hold in
\eqref{eq:binomial-inequality} for every $w$.  Since $h_w>0$, its equality
condition is
\begin{equation}\label{eq:h-equality}
 h_w=k+1.
\end{equation}
Equations \eqref{eq:p-le-mu}, \eqref{eq:h-sum-p},
\eqref{eq:mu-w-sum}, and \eqref{eq:h-equality} also imply
\begin{equation}\label{eq:p-equals-mu}
 p_i(w)=\mu_i(w)\qquad(1\le i\le r).
\end{equation}

\smallskip
\noindent\textbf{Step 3: } we finish the proof of \ref{item:unit-structure}.
Substitute \eqref{eq:p-equals-mu} into \eqref{eq:w-moment-sum}.  Its left-hand
side is a sum of nonnegative terms and equals zero.  Hence every nonzero coefficient satisfies
\begin{equation*}
 t_{\ell}=-1.
\end{equation*}
Equation \eqref{eq:w-second-moment} now shows $\card{\mathcal{R}_i(w)}$ is $n_i-\mu_i(w)$. Hence $\mu_i(w)\leq n_i$.

Let $u\in \mathcal{R}_i(w)$. Since $u\sim_Hw$ and $w\in L_{k+1}$, the distance of
$u$ from $p$ is $k$, $k+1$, or $k+2$.  But
\[
 \Phi(u)=P-\sum_{j=1}^r(\mu_j(w)+\delta_{ij})v_j
\]
has coefficient sum $k+2$.  The uniqueness of the representation in the
basis $v_j$ and the inductive spectral formula exclude $L_k$ and $L_{k+1}$.
Therefore $u\in L_{k+2}$.  This proves the numbers of parents and children, $\deg_H(w)=D$, and no visible edges on $L_{k+1}$.

\smallskip
\noindent\textbf{Step 4:} Next, we prove assertions \ref{item:balance}. Suppose distinct $w_1,w_2\in L_{k+1}$ had two distinct common parents
$x,y\in L_k$.  Then $x,y$ have the common child $w_1$.  But $x$ and $y$ have a unique common child, whereas both $w_1$ and $w_2$ are common
children.  This contradiction proves
\[
 |\Par(w_1)\cap\Par(w_2)|\le1.
\]
Then \Cref{lem:translated-balance} gives
\begin{equation}
\label{eq:edge-iff-child}
    \card{\Par(w_1)\cap\Par(w_2)}=\card{\Ch(w_1)\cap\Ch(w_2)}\in \{0,1\}.
\end{equation}
This proves the assertion
\ref{item:balance} on \(L_{k+1}\) and finishes the induction.

Finally, at \(L_D\), \eqref{eq:parent-child-count} gives
\(\Ch(x)=\varnothing\) for every \(x\in L_D\). Hence
\(L_{D+1}=\varnothing\). Since \(H\) is connected, there are no further
distance layers, and therefore
\[
V(H)=\bigsqcup_{j=0}^{D}L_j.
\]
\end{proof}

\begin{corollary}
\label{cor:component-size}
The visible component $H$ is $D$-regular and has $\card{L_k}=\binom{D}{k}$. Thus
\begin{equation}\label{eq:component-size}
 |V(H)|=\sum_{k=0}^D\binom Dk=2^D.
\end{equation}
\end{corollary}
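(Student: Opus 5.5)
The corollary follows from \Cref{thm:intrinsic-Gamma}; what remains is a double-counting argument over consecutive layers.

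\emph{Regularity.} By \eqref{eq:parent-child-count}, $\deg_H(x)=D$ for every $x\in L_k$ with $0\le k\le D$. The proof of \Cref{thm:intrinsic-Gamma} also shows $V(H)=\bigsqcup_{j=0}^D L_j$. Hence $H$ is $D$-regular.

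\emph{Layer sizes.} I will prove $|L_k|=\binom Dk$ by induction on $k$, starting from $|L_0|=1$. Count the visible edges between $L_k$ and $L_{k+1}$ in two ways. Each $x\in L_k$ has exactly $D-k$ children, and each $w\in L_{k+1}$ has exactly $k+1$ parents, both by \eqref{eq:parent-child-count}. Every edge of $H$ between these layers joins a vertex to one of its children (equivalently, a vertex to one of its parents). Therefore
\[
|L_k|(D-k)=|L_{k+1}|(k+1).
\]
Using the inductive hypothesis, this gives
\[
|L_{k+1}|=\binom Dk\frac{D-k}{k+1}=\binom D{k+1}.
\]
This identity is exactly \eqref{eq:edge-layer-count} with $h_w=k+1$ substituted from \eqref{eq:h-equality}.

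\emph{Total count.} Since the layers $L_0,\dots,L_D$ partition $V(H)$, the binomial theorem gives \eqref{eq:component-size}:
\[
|V(H)|=\sum_{k=0}^D\binom Dk=2^D.
\]

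The only point needing care is that the layers $L_0,\dots,L_D$ are all nonempty and exhaust $V(H)$. The partition is already shown at the end of the proof of \Cref{thm:intrinsic-Gamma}. Nonemptiness then comes from the recursion itself, because $\binom Dk>0$ for $0\le k\le D$. So I expect no real obstacle.
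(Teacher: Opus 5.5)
Your proposal is correct and follows the paper's proof essentially verbatim. You get $D$-regularity from \eqref{eq:parent-child-count} together with the layer partition, double count the edges between $L_k$ and $L_{k+1}$ to obtain $|L_k|(D-k)=|L_{k+1}|(k+1)$, induct from $L_0=\{p\}$, and sum.
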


\begin{proof}
The $D$-regularity of $H$ follows from \eqref{eq:parent-child-count}.  For
$0\leq k<D$, count the visible edges between $L_k$ and $L_{k+1}$ in two
ways.  Every vertex of $L_k$ has $D-k$ children, while every vertex of
$L_{k+1}$ has $k+1$ parents.  Hence
\begin{equation}\label{eq:consecutive-layer-count}
    |L_k|(D-k)=|L_{k+1}|(k+1).
\end{equation}
Since $L_0=\{p\}$, induction using
\eqref{eq:consecutive-layer-count} gives
\begin{equation}\label{eq:layer-size}
    |L_k|=\binom{D}{k}\qquad(0\leq k\leq D).
\end{equation}
Since there exist at most $D$ layers,
\begin{equation*}
    |V(H)|=\sum_{k=0}^{D}|L_k|
    =\sum_{k=0}^{D}\binom{D}{k}=2^D,
\end{equation*}
This proves
\eqref{eq:component-size}.
\end{proof}

As an application of \Cref{thm:intrinsic-Gamma}, we give a new rigidity result for hypercubes.
\begin{theorem}[Rigidity of hypercubes]
        \label{thm:hypercube-rigidity}
        Let $G$ be a connected Lichnerowicz-sharp graph. If every edge of $G$ is visible, then $G$ is isomorphic to a hypercube $H_D$ for some $D\geq 1$.
\end{theorem}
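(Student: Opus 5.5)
If every edge is visible, then $G_{\vis}=G$. Since $G$ is connected, $G$ itself is the unique component $H$ of $\Gvis$. By \Cref{thm:intrinsic-Gamma} and \Cref{cor:component-size}, $G$ is therefore $D$-regular with $D=\deg_G(p)$, and its vertex set splits into the layers $L_0,\dots,L_D$ around the maximum point $p$ of a generic $2$-eigenfunction. In particular $L_D\neq\varnothing$, since $|L_D|=\binom{D}{D}=1$. Because the layers are exactly the distance spheres $S_k^H(p)=S_k^G(p)$, the unique vertex $q\in L_D$ satisfies $d_G(p,q)=D$, and hence $\diam(G)\geq D$.

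The plan is to close the argument with the Myers bound, \Cref{thm:MyersBound}. By \Cref{thm:first-sharp-K2} we may take $K=2$, and the maximal degree is $D$ because $G$ is $D$-regular. The bound then reads $\diam(G)\leq 2D/K=D$. Combined with the previous paragraph this forces $\diam(G)=D=2D/K$, so equality holds in \eqref{eq:MyersBound}. The equality case of \Cref{thm:MyersBound} then gives $G\cong H_D$.

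It remains to check $D\geq1$. The graph has at least one edge: the trivial graph has no $\lambda_1$, so connectedness together with $\lambda_1=2$ implies $|V|\geq2$. Every edge is visible, so some edge at $p$ is visible, and hence $D=\deg_H(p)\geq1$.

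The main obstacle I expect is the bookkeeping needed to apply \Cref{thm:MyersBound} correctly. Its $D$ is the maximum degree of the whole graph, and this matches the fiber dimension only because we have shown $G=H$ is $D$-regular. One must also make sure that the layer structure really yields a vertex at graph distance exactly $D$, and not merely at distance $D$ in $H$. This is immediate here because $H=G$. If the Myers equality case were not available, a fallback would be a direct argument: $\Phi$ is injective on distinct vertices, since no edge collapses, and $\Phi(x)=P-\sum_i\mu_i(x)v_i$. Regularity with $\deg_H=D$ would then give $\lambda_D=2$ from $m_2=r$ and the layer count; this is less clean, so I would rely on \Cref{thm:MyersBound}. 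Alternatively, \Cref{thm:LMP-rigidity} applies once $m_2\geq D$, but that would need every $n_i=1$, which is not directly available, so the diameter route is preferable.
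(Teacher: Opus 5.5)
Your proof is correct and is essentially the paper's argument: with $G_{\vis}=G$, \Cref{thm:intrinsic-Gamma} gives $D$-regularity and a vertex at distance $D$ from $p$, so $\diam(G)\geq D$, while \Cref{thm:MyersBound} with $K=2$ gives $\diam(G)\leq D$, and the equality case yields $G\cong H_D$ (your check that $D\geq 1$ is a harmless extra detail). The fallback ideas in your last paragraph are not needed, but note that injectivity of $\Phi$ would not follow merely from the absence of collapsed edges, since that only separates adjacent vertices.
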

\begin{proof}
   Since every edge of $G$ is visible and $G$ is connected, we have
$G_{\vis}=G$. Choose $f\in E_2$ satisfying
\eqref{eq:visible-strict}. By \Cref{thm:unique-maximum}, $f$ has a
unique maximizer $p$. Put $D=\deg_G(p)$. By
\Cref{thm:intrinsic-Gamma}, the graph $G$ is $D$-regular and
$S_D^G(p)\neq\varnothing$. Hence $\diam(G)\geq D$. On the other hand,
\Cref{thm:MyersBound} gives $\diam(G)\leq D$. Therefore
$\diam(G)=D$, and the equality case of \Cref{thm:MyersBound} yields
$G\cong H_D$.
\end{proof}
\begin{example}
    \label{ex:four-dimensional-star-twist}
  Every hypercube satisfies all the assertions in
\Cref{thm:intrinsic-Gamma} and is itself Lichnerowicz-sharp.
Nevertheless, a Lichnerowicz-sharp graph may have a visible component
that is not a hypercube.

    Let
\begin{equation*}
 L_0=\{p\},\qquad
 L_1=\{a_i:i\in[4]\},\qquad
 L_2=\{e_{ij}:1\le i<j\le4\},
\end{equation*}
and let
\begin{equation*}
 L_3=\{b_i:i\in[4]\},\qquad L_4=\{q\}.
\end{equation*}
Define a graph $H$ on $\bigsqcup_{k=0}^4L_k$ by
\begin{align*}
 E(H)={}&\{pa_i:i\in[4]\}
 \cup\{a_ie_{ij},a_je_{ij}:1\le i<j\le4\}\\
 &\cup\{e_{ij}b_i,e_{ij}b_j:1\le i<j\le4\}
 \cup\{b_iq:i\in[4]\}.
\end{align*}
In fact, $H$ is the cospectral mate of $H_4$ by Godsil-McKay switching \cite[Section~1.8.1]{Brouwer2012}.  thus
\begin{equation*}
     \operatorname{Spec}(-L_H)=\{0,2^4,4^6,6^4,8\}.
\end{equation*}

\begin{figure}[htbp]
    \centering

\begin{tikzpicture}[
  x=0.75cm,
  y=0.62cm,
  graph edge/.style={draw=black,line width=0.55pt},
  graph vertex/.style={circle,draw=black,fill=white,line width=0.55pt,
    minimum size=4.8pt,inner sep=0pt}
]
\begin{scope}
  \coordinate (hc-p) at (0,0);
  \coordinate (hc-q) at (6,0);
  \foreach \i/\y in {1/1.5,2/0.5,3/-0.5,4/-1.5}{
    \coordinate (hc-a\i) at (1.35,\y);
  }
  \foreach \i/\y in {4/1.5,3/0.5,2/-0.5,1/-1.5}{
    \coordinate (hc-b\i) at (4.65,\y);
  }
  \foreach \e/\y in {12/2,13/1.2,14/0.4,23/-0.4,24/-1.2,34/-2}{
    \coordinate (hc-e\e) at (3,\y);
  }
  \foreach \i in {1,2,3,4}{
    \draw[graph edge] (hc-p)--(hc-a\i);
    \draw[graph edge] (hc-b\i)--(hc-q);
  }
  \foreach \e/\i/\j in {12/1/2,13/1/3,14/1/4,23/2/3,24/2/4,34/3/4}{
    \draw[graph edge] (hc-a\i)--(hc-e\e);
    \draw[graph edge] (hc-a\j)--(hc-e\e);
  }
  \foreach \e/\i/\j in {12/3/4,13/2/4,14/2/3,23/1/4,24/1/3,34/1/2}{
    \draw[graph edge] (hc-e\e)--(hc-b\i);
    \draw[graph edge] (hc-e\e)--(hc-b\j);
  }
  \foreach \v in {hc-p,hc-a1,hc-a2,hc-a3,hc-a4,
    hc-e12,hc-e13,hc-e14,hc-e23,hc-e24,hc-e34,
    hc-b1,hc-b2,hc-b3,hc-b4,hc-q}{
    \node[graph vertex] at (\v) {};
  }
  \node at (3,-2.7) {$4$-dimensional hypercube $H_4$};
\end{scope}

\begin{scope}[xshift=7.8cm]
  \coordinate (st-p) at (0,0);
  \coordinate (st-q) at (6,0);
  \foreach \i/\y in {1/1.5,2/0.5,3/-0.5,4/-1.5}{
    \coordinate (st-a\i) at (1.35,\y);
    \coordinate (st-b\i) at (4.65,\y);
  }
  \foreach \e/\y in {12/2,13/1.2,14/0.4,23/-0.4,24/-1.2,34/-2}{
    \coordinate (st-e\e) at (3,\y);
  }
  \foreach \i in {1,2,3,4}{
    \draw[graph edge] (st-p)--(st-a\i);
    \draw[graph edge] (st-b\i)--(st-q);
  }
  \foreach \e/\i/\j in {12/1/2,13/1/3,14/1/4,23/2/3,24/2/4,34/3/4}{
    \draw[graph edge] (st-a\i)--(st-e\e);
    \draw[graph edge] (st-a\j)--(st-e\e);
    \draw[graph edge] (st-e\e)--(st-b\i);
    \draw[graph edge] (st-e\e)--(st-b\j);
  }
  \foreach \v in {st-p,st-a1,st-a2,st-a3,st-a4,
    st-e12,st-e13,st-e14,st-e23,st-e24,st-e34,
    st-b1,st-b2,st-b3,st-b4,st-q}{
    \node[graph vertex] at (\v) {};
  }
  \node at (3,-2.7) {$H$};
\end{scope}
\end{tikzpicture}
\end{figure}

Now obtain $G$ from $H$ by adding the following edges inside the layers:
\begin{equation*}
 a_i\sim a_j,\qquad b_i\sim b_j\qquad(i\ne j),
\end{equation*}
and
\begin{equation*}
 e_{ij}\sim e_{k\ell}
 \quad\Longleftrightarrow\quad
 \{i,j\}\cap\{k,\ell\}\ne\varnothing
 \qquad{(e_{ij}\ne e_{k\ell})}.
\end{equation*}
There are three vertex types, namely $L_0\cup L_4$, $L_1\cup L_3$, and
$L_2$.  A direct calculation using \eqref{eq:A-offdiag} and
\eqref{eq:CKLP-diag} gives
\begin{equation*}
 \operatorname{Spec}\bigl(A_\infty^G(x)\bigr)=
 \begin{cases}
  \{2,10^3\},
    &x\in L_0\cup L_4,\\
  \left\{2,4,8,
    \left(10-\dfrac{\sqrt{26}}2\right)^2,
    \left(10+\dfrac{\sqrt{26}}2\right)^2\right\},
    &x\in L_1\cup L_3,\\
  \{2,4,6,8^2,10,14^2\},
    &x\in L_2,
 \end{cases}
\end{equation*}
where exponents indicate multiplicities.  Hence $G$ satisfies
$\CD(2,\infty)$.  Moreover, 
\begin{equation*}
 \operatorname{Spec}(-L_G)=\{0,2,4,6^4,8^4,10^5\}.
\end{equation*}
In particular, $G$ is Lichnerowicz-sharp and $E_2(G)$ is one-dimensional.
It is spanned by the function
\begin{equation*}
 f(x)=2-k\qquad(x\in L_k).
\end{equation*}
Every added edge lies inside a layer and is therefore collapsed by $f$,
whereas $f$ changes by one along every edge of $H$.  Consequently,
$G_{\vis}=H$.

Finally, $a_1$ and $b_1$ have the three common neighbours
$e_{12},e_{13},e_{14}$ in $H$.  In a hypercube, two distinct vertices have
either zero or two common neighbours.  Therefore $H\not\cong H_4$, and the
visible component of the Lichnerowicz-sharp graph $G$ is not a hypercube.

By \Cref{thm:hypercube-rigidity}, we know if $H$ is not isomorphic to a hypercube, it could not be $\CD(2,\infty)$.
\end{example}

\subsection{Bundle structure}
In this subsection, we prove the main theorem \Cref{thm:BundleStructure}. Before the proof, we recall the Perron-Frobenius theorem for an irreducible matrix with non-negative entries.
\begin{theorem}[{\cite[Theorem 2.2.1]{Brouwer2012}}]
\label{thm:PerronFrobenius}
Let $T$ be an  irreducible matrix with non-negative entries. Then there exists a unique positive number $\lambda_0$ such that $\lambda_0$ is the spectral radius of $T$ and every non-negative eigenvector of $T$ corresponds to the eigenvalue $\lambda_0$.
\end{theorem}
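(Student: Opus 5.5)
The plan is the classical proof. First I produce a positive eigenvector at the spectral radius. Then I use a positive left eigenvector to show that no other eigenvalue admits a non-negative eigenvector. Let \(T\) be an irreducible \(n\times n\) matrix with non-negative entries.

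\textbf{Step 1: positive eigenvector.} Irreducibility means that for all \(i,j\) some power \(T^k\) with \(k\le n-1\) has \((T^k)_{ij}>0\). Hence \(N:=(I+T)^{n-1}\) has all entries strictly positive. On the simplex \(\Delta=\{x\ge 0:\sum_i x_i=1\}\), consider the map \(x\mapsto Tx/\one^T Tx\). It is well defined because irreducibility (with \(n\ge 2\)) forbids a zero column, so \(Tx\neq 0\) for \(x\in\Delta\). The map is continuous, so Brouwer's fixed point theorem gives \(x\in\Delta\) and \(\lambda_0:=\one^T Tx\ge 0\) with \(Tx=\lambda_0x\). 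Then
\[
Nx=(1+\lambda_0)^{n-1}x,
\]
and since \(Nx>0\) entrywise, \(x>0\). Moreover \(\lambda_0>0\): otherwise \(Tx=0\) with \(x>0\) would force \(T=0\), which is not irreducible for \(n\ge 2\). The \(1\times1\) case is handled by the usual convention that the entry is positive.

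\textbf{Step 2: \(\lambda_0\) is the spectral radius.} Let \(Tz=\mu z\) with \(z\) complex and nonzero. Entrywise, \(|\mu|\,|z|\le T|z|\). Write \(|z|_i\le c\,x_i\) with \(c\) minimal, and compare \(T|z|\) with \(Tx\). A cleaner route is to use a left eigenvector. \(T^T\) is also irreducible, so Step 1 gives \(y>0\) and \(\nu>0\) with \(y^TT=\nu y^T\). Then
\[
|\mu|\,y^T|z|\le y^TT|z|=\nu\, y^T|z|,
\]
and \(y^T|z|>0\), so \(|\mu|\le\nu\). Applying this to \(\mu=\lambda_0\) and the eigenvector \(x\) gives \(\lambda_0\le \nu\). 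Moreover \(y^TTx\) equals both \(\lambda_0\,y^Tx\) and \(\nu\, y^Tx\), so \(\nu=\lambda_0\). Therefore \(\lambda_0\) equals the spectral radius.

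\textbf{Step 3: every non-negative eigenvector belongs to \(\lambda_0\).} Suppose \(Tw=\lambda w\) with \(w\ge 0\) and \(w\neq0\). Pairing with the positive left eigenvector \(y\) gives
\[
\lambda\, y^Tw=y^TTw=\lambda_0\, y^Tw.
\]
Since \(y>0\) and \(w\ge0\) is nonzero, \(y^Tw>0\), so \(\lambda=\lambda_0\). Uniqueness of \(\lambda_0\) with these properties is then immediate.

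The only genuinely nontrivial step is the existence in Step 1. I would handle it with the Brouwer fixed-point argument above; the Collatz--Wielandt max–min characterization is an alternative. Everything after that is the short duality argument with the left Perron vector.
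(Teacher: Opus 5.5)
Your proof is correct and complete for the statement as given. The paper does not prove this statement; it quotes it from Brouwer--Haemers \cite{Brouwer2012}, where it is one clause of the full Perron--Frobenius theorem. That fuller theorem also includes simplicity of $\lambda_0$.

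The usual textbook route gets the positive eigenvector from the Collatz--Wielandt max--min function on the simplex, using only compactness and the positivity of $(I+T)^{n-1}$. Your existence step replaces this optimization with Brouwer's fixed-point theorem, which is shorter but topologically heavier. Your use of the positive left eigenvector $y$ then gives both the spectral-radius identification and the non-negative-eigenvector clause in a few lines, which is efficient.

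Your handling of the $1\times 1$ case is the right one. If $[0]$ counted as irreducible, as it does under the strongly-connected-digraph definition, the word ``positive'' in the statement would fail. So that convention is necessary, not just convenient. The unfinished sentence about $|z|_i\le c\,x_i$ at the start of Step~2 can simply be deleted.

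Your argument does not give simplicity of $\lambda_0$, and the paper uses this right after invoking the theorem: ``the Perron eigenvector is unique up to scale'' in Step~3 of the proof of \Cref{thm:component-matching}. There it also follows from $C$ being connected and regular. Within your framework it costs one more line. Suppose $Tw=\lambda_0 w$ with $w$ real and not a multiple of $x$. Choose $t\in\mathbb{R}$ so that $u=x-tw\ge 0$ has a zero entry. Then $u\neq 0$ and $(I+T)^{n-1}u=(1+\lambda_0)^{n-1}u$, so the positivity argument of your Step~1 forces $u>0$, a contradiction.
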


\begin{theorem}[Construction of transition map]
\label{thm:component-matching}
Let $C$ and $D$ be distinct connected components of $\Gvis$.  Suppose
that there exists an edge joining $C$ to $D$. Then the edges of $G$ between $C$ and $D$ form a perfect matching. More
precisely, for every $x\in C$ there exists a unique vertex $x'\in D$ such that
$x\sim_Gx'$, and every such edge $xx'$ is collapsed.  The map
\begin{equation*}
 \sigma_{CD}:C\to D,\qquad \sigma_{CD}(x)=x',
\end{equation*}
is an isomorphism $C\cong D$.
\end{theorem}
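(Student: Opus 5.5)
Let $M\in\{0,1\}^{C\times D}$ be the incidence matrix of $G$-edges between $C$ and $D$, i.e. $M_{xy}=1$ iff $x\sim_G y$. The plan is to show $M$ is a permutation matrix intertwining the adjacency matrices of $C$ and $D$.

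\textbf{Step 1: every $C$--$D$ edge is collapsed.} An edge $xy$ with $x\in C$, $y\in D$ is not visible, since visible edges stay inside a component of $\Gvis$. Hence $\Phi(x)=\Phi(y)$.

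\textbf{Step 2: the intertwining relation $A_CM=MA_D$.} Here $A_C,A_D$ are the adjacency matrices of the visible graphs $C,D$. Entrywise, for $x\in C$, $y\in D$ this says
\[
\#\{x'\in C:\ x'\sim_H x,\ x'\sim_G y\}=\#\{y'\in D:\ x\sim_G y',\ y'\sim_H y\}.
\]
\begin{enumerate}
\item If $x\sim_G y$, the edge is collapsed. Apply \eqref{eq:collapsed-average} with $\Phi$ in place of a single eigenfunction. The common neighbours $z$ of $x,y$ satisfy $\Phi(z)-\Phi(x)\in\{0\}\cup\{\pm v_i\}$ by \Cref{thm:global-axes}, and the average of $\Phi(z)-\Phi(x)$ must vanish.
\item If $d_G(x,y)=2$, apply \eqref{eq:midpoint-scalar} in the same way.
\end{enumerate}
The aim is to show that common neighbours joined visibly to $x$ (hence lying in $C$) are matched in number with those joined visibly to $y$ (hence in $D$). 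To rule out mixed configurations, I would use the classification from the proof of \Cref{lem:translated-balance}: a common neighbour attached visibly on one side and collapsed on the other forces a shift of $\pm v_i$. I also expect to need the layered coordinates $\mu$ of \Cref{thm:intrinsic-Gamma}, transported through $\Phi$, which is consistent because $\Phi(x)=\Phi(y)$ along collapsed $C$--$D$ edges. I expect this bookkeeping to be the main obstacle, especially when $d_G(x,y)\ge 3$ but an entry of $A_CM$ could still be nonzero. It may be cleaner to prove the identity on the eigenvector level instead, i.e. as an identity of $\Phi$-weighted counts.

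\textbf{Step 3: Perron--Frobenius.} $C$ and $D$ are connected and $D'$-regular with the same degree $D'$ (the components have equal $\Phi$-images, so they share the axis system and degree by \Cref{thm:intrinsic-Gamma}). Then $A_C\one=D'\one$, and $M^TM$ commutes with $A_D$ (indeed $A_DM^TM=M^TA_CM=M^TMA_D$). Since $\one^T M\neq0$, the vector $M^T\one$ is a nonnegative nonzero eigenvector of $A_D$ with eigenvalue $D'$. Perron--Frobenius (\Cref{thm:PerronFrobenius}) and simplicity of the top eigenvalue of a connected graph give $M^T\one=\alpha\one$, and symmetrically $M\one=\beta\one$. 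Counting edges gives $\alpha|D|=\beta|C|$, and $|C|=|D|=2^{D'}$ by \Cref{cor:component-size}, so $\alpha=\beta$: each vertex has the same number $\alpha$ of partners.

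\textbf{Step 4: $\alpha=1$.} Partners are collapsed neighbours with equal $\Phi$, and $\Phi$ is injective on each component because the $\mu$-vectors of \Cref{thm:intrinsic-Gamma} determine $\Phi$. Suppose $y_1,y_2\in D$ were both partners of $x$. Then $\Phi(y_1)=\Phi(y_2)=\Phi(x)$, which forces $y_1=y_2$. Hence $\alpha=1$ and $M$ is a permutation matrix.

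\textbf{Step 5: $\sigma_{CD}$ is an isomorphism.} With $M$ a permutation matrix, $A_CM=MA_D$ says precisely that $\sigma_{CD}$ preserves adjacency. It also preserves $\Phi$, consistent with the uniqueness of maxima (\Cref{thm:unique-maximum}) matching $p_C\mapsto p_D$.
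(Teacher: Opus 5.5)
Your architecture matches the paper's: collapsed cross edges, the intertwining relation $A_CM=MA_D$, Perron--Frobenius for constant row and column sums, and then showing that the common value is $1$. However, Step~4 rests on a false claim. The map $\Phi$ is not injective on a visible component. By \Cref{thm:intrinsic-Gamma}, $\Phi(x)$ determines $\mu(x)$, but $\mu(x)$ does not determine $x$: for instance, all $n_i$ vertices of the class $I_i\subset L_1$ have the same image $P-v_i$. In \Cref{ex:four-dimensional-star-twist}, $E_2$ is one-dimensional and $\Phi$ takes only five values on the sixteen vertices of $H$. So two partners $y_1\neq y_2$ of $x$ with $\Phi(y_1)=\Phi(y_2)=\Phi(x)$ give no contradiction.

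The missing idea is \Cref{thm:unique-maximum}, applied at one well-chosen vertex. One vertex suffices, because Step~3 shows every vertex has the same number $\alpha$ of partners. Let $p_C,p_D$ be the maximizers of a generic $f$ on $C$ and $D$, and assume $f(p_C)\ge f(p_D)$. Each partner of $p_C$ is joined to it by a collapsed edge, so it has $f$-value $f(p_C)\ge\max_D f$. Hence it equals the unique maximizer $p_D$, and $\alpha=1$.

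Step~2 is also not yet a proof, and the strategy you sketch points the wrong way. Take $x\in C$, $y\in D$ with $\Phi(x)\ne\Phi(y)$. You cannot ``rule out mixed configurations'': every common neighbour $z$ is mixed. Both edges visible would join $C$ to $D$ in $\Gvis$, and both collapsed would give $\Phi(x)=\Phi(y)$. The mixed neighbours with $xz$ visible, respectively $yz$ visible, are exactly what $a_{xy}=(A_CM)_{xy}$ and $b_{xy}=(MA_D)_{xy}$ count. The collapsed edge forces $\Phi(z)\in\{\Phi(x),\Phi(y)\}$, so the vector form of \eqref{eq:midpoint-scalar} reads $(a_{xy}-b_{xy})(\Phi(y)-\Phi(x))=0$. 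No axes or $\mu$-coordinates are needed.

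If $\Phi(x)=\Phi(y)$ (in particular if $x\sim_G y$), a vertex $z$ counted by $a_{xy}$ would have $\Phi(z)=\Phi(y)=\Phi(x)$ through the collapsed edge $zy$. This contradicts the visibility of $xz$, so $a_{xy}=b_{xy}=0$. If $d_G(x,y)\ge3$, both entries vanish trivially, since they count common neighbours, so your worry about that case is unfounded.

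In Step~3 you cannot yet appeal to ``equal $\Phi$-images'' to get equal degrees, but you do not need to. The relation $A_DM^T\one=d_CM^T\one$ with $M^T\one\ge0$ nonzero already forces $d_C=d_D$ by Perron--Frobenius. Your Step~5, reading adjacency preservation directly off $A_CM=MA_D$ for a permutation matrix, is fine.
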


\begin{proof}
We divide the proof into five steps.

\smallskip
\noindent\textbf{Step 1:} We prove every edge between distinct visible components is
collapsed.
Let $x\in C$ and $y\in D$ satisfy $x\sim_Gy$.  If $xy$ were visible, it would
be an edge of $\Gvis$ and would place $x$ and $y$ in the same connected
component of $\Gvis$.  Since $C\ne D$, this is impossible.  Consequently, every edge joining $C$ to $D$ is collapsed.

\smallskip
\noindent\textbf{Step 2: }
Let $A_C$ and $A_D$ denote the adjacency matrices of $C$ and
$D$, respectively.  Define the $\card{V(C)}\times \card{V(D)}$ matrix $M$ by
\begin{equation*}
 M_{xy}=\begin{cases}
  1,&x\sim_Gy,\\
  0,&x\not\sim_Gy,
 \end{cases}
 \qquad x\in V(C),\ y\in V(D).
\end{equation*}
Since there exists an edge joining $C$ to $D$, $M\ne0$.  We claim that
\begin{equation}\label{eq:intertwining-CD}
 A_CM=MA_D.
\end{equation}

Fix $x\in V(C)$ and $y\in V(D)$, and put
\begin{align}
 a_{xy}&:=(A_CM)_{xy}
 =\bigl|\{z\in C:x\sim_{\Gvis}z,\ z\sim_Gy\}\bigr|,
 \label{eq:a-xy}\\
 b_{xy}&:=(MA_D)_{xy}
 =\bigl|\{z\in D:x\sim_Gz,\ z\sim_{\Gvis}y\}\bigr|.
 \label{eq:b-xy}
\end{align}
We prove $a_{xy}=b_{xy}$.

First suppose that $\Phi(x)=\Phi(y)$.  If $z$ were counted by
\eqref{eq:a-xy}, then $zy$ would join the distinct visible components $C$ and
$D$, hence would be collapsed by $\mathbf{Step 1}$.  Thus
$\Phi(z)=\Phi(y)=\Phi(x)$, contradicting the visibility of $xz$.  Hence
$a_{xy}=0$.  The same argument with $C,D$ exchanged gives $b_{xy}=0$.

Now suppose that $\Phi(x)\ne\Phi(y)$. Then $x$ is not adjacent to $y$ in $G$ by $\mathbf{Step 1}$. If $S_1^G(x)\cap S_1^G(y)=\varnothing$, $a_{xy}=b_{xy}=0$. Now assume $S_1^G(x)\cap S_1^G(y)\neq \varnothing$. Take $z\in S_1^G(x)\cap S_1^G(y)$. Since $x,y$ are in different visible components, one of $xz,yz$ is collapsed and the other is visible. Then \eqref{eq:midpoint-scalar} gives
\begin{equation*}
\begin{aligned}
        0&=\sum_{\substack{z\sim x,\\ z\sim y}}(\Phi(z)-\Phi(x))+\sum_{\substack{z\sim x,\\ z\sim y}}(\Phi(z)-\Phi(y))\\
        &=\sum_{\substack{xz\text{ is visible},\\yz\text{ is collapsed} }}(\Phi(y)-\Phi(x))+\sum_{\substack{xz\text{ is collapsed},\\yz\text{ is visible}}}(\Phi(x)-\Phi(y))\\
        &=(a_{xy}-b_{xy})(\Phi(y)-\Phi(x)).
\end{aligned}
\end{equation*}
Hence $a_{xy}=b_{xy}$.

\smallskip
\noindent\textbf{Step 3:} For $C$, the number of edges joining a vertex of $C$ to $D$ is constant over $C$. The same result also holds for $D$. By \Cref{cor:component-size}, the visible components $C$ and $D$ are regular.
Write their degrees as $d_C$ and $d_D$.  Let $\one_C,\one_D$ denote their
all-one vectors and set $ m=M\one_D\in\R^{V(C)}$. The coordinate $m(x)$ is $|S_1^G(x)\cap D|$.  From
\eqref{eq:intertwining-CD},
\begin{equation}\label{eq:PF-left}
 A_Cm=A_CM\one_D=MA_D\one_D=d_Dm.
\end{equation}
The vector $m$ is nonzero and entrywise nonnegative.  Since $C$ is
connected, $A_C$ is irreducible.  The Perron--Frobenius theorem (\Cref{thm:PerronFrobenius}) applied to
\eqref{eq:PF-left} shows that $d_D$ is the spectral radius of $A_C$.  Since
$A_C$ is the adjacency matrix of a connected $d_C$-regular graph, its spectral
radius is $d_C$.  Hence
\begin{equation}\label{eq:equal-visible-degrees}
 d:=d_C=d_D,
\end{equation}
and the Perron eigenvector is unique up to scale.  Consequently, for some
positive integer $a$,
\begin{equation}\label{eq:constant-row-degree}
 M\one_D=a\one_C.
\end{equation}

Applying the same argument to the transpose of
\eqref{eq:intertwining-CD},
\[
 A_DM^{T}=M^{T}A_C,
\]
gives a positive integer $b$ such that
\begin{equation*}
 M^{T}\one_C=b\one_D.
\end{equation*}
Thus every vertex of $C$ has exactly $a$ neighbours in $D$, and every
vertex of $D$ has exactly $b$ neighbours in $C$.

\smallskip
\noindent\textbf{Step 4:} We show $a=b=1$.
The size conclusion of \Cref{cor:component-size}, applied to both components
and combined with \eqref{eq:equal-visible-degrees}, gives
\begin{equation}\label{eq:equal-component-sizes}
 |C|=2^d=|D|.
\end{equation}
Counting the edges of $G$ between $C$ and $D$ from the two sides yields
\[
 a|C|=b|D|.
\]
Therefore Equation \eqref{eq:equal-component-sizes} implies $a=b$.

Let $p\in C, p^\prime\in D$ be the unique maximizers of a generic $2$-eigenfunction $f$ over $C$ and $D$ respectively. Assume $f(p)\geq f(p^\prime)$. If $a>1$, then there exist at least $a$ vertices of $D$ with $f$-value $f(p)$. Since $f(p)\geq f(p^\prime)$, we have $f(p)=f(p^\prime)$ contradicting the uniqueness of $p^\prime$.

Therefore the edges between $C$ and $D$ form a perfect matching, and
all of them are collapsed by \textbf{Step~1}.  

\smallskip
\noindent\textbf{Step 5:} Let $\sigma_{CD}$ be the map sending every vertex of $C$ to its unique neighbour in $D$. $\sigma_{CD}$ is a graph isomorphism. 
 Fix $x,x^\prime \in C$ with $x\sim_{G_{\vis}} x^\prime$. Let $y=\sigma_{CD}(x), y^\prime=\sigma_{CD}(x^\prime)$. The path $xx^\prime y^\prime$ gives $b_{xy^\prime}=a_{xy^\prime}\neq 0$. Since $y$ is the unique neighbour of $x$ in $D$, to guarantee $b_{xy^\prime}\neq 0$, we must have $y\sim_{G_{\vis}}y^\prime$. Hence $\sigma_{CD}$ is a graph homomorphism. Since $\sigma_{CD}$ is
bijective and both $C$ and $D$ are $d$-regular, it is a graph isomorphism.
\end{proof}

Now, we are ready to prove \Cref{thm:BundleStructure}:
\begin{proof}[Proof of \Cref{thm:BundleStructure}]
Let $\mathcal C$ be the set of connected components of $G_{\vis}$.  Since $G$
is connected, the quotient graph $G^\prime$ on $\mathcal C$ is connected.  If
$C,D\in\mathcal C$ are adjacent, \Cref{thm:component-matching} gives a perfect
matching between them and an isomorphism
\begin{equation*}
 \sigma_{CD}:C\to D,
 \qquad
 \sigma_{DC}=\sigma_{CD}^{-1}.
\end{equation*}
Fix the component $H$.  For each $C\in\mathcal C$, choose an isomorphism
$\tau_C:H\to C$, with $\tau_H$ equal to the identity.  Such isomorphisms exist
by composing the maps $\sigma_{CD}$ along paths in the connected graph
$G^\prime$.  For every oriented edge $(C,D)$ of $G^\prime$, define
\begin{equation*}
 \bar\sigma_{CD}
 =\tau_D^{-1}\circ\sigma_{CD}\circ\tau_C\in\operatorname{Aut}(H).
\end{equation*}
Then $\bar\sigma_{DC}=\bar\sigma_{CD}^{-1}$.  The map
\begin{equation*}
 V(G^\prime)\times V(H)\to V(\hat G),
 \qquad
 (C,u)\to\tau_C(u),
\end{equation*}
is a bijection.  It sends the fiber edges over $C$ to the visible edges of
$C$, and it sends the edge from $(C,u)$ to
$(D,\bar\sigma_{CD}(u))$ to the matching edge from $\tau_C(u)$ to $\sigma_{CD}\tau_C(u)$.
By the definition of $\hat G$, these are all of its edges.  Hence
\begin{equation*}
 \hat G\cong G^\prime\square_{\bar\sigma}H.
\end{equation*}

We next construct a section.  Choose $f\in E_2(G)$ satisfying
\eqref{eq:visible-strict}, and for every $C\in\mathcal C$ let $p_C$ be the
unique maximum of $f|_C$ given by \Cref{thm:unique-maximum}.  If $C\sim D$,
every matching edge $x\sigma_{CD}(x)$ is collapsed, and therefore
\begin{equation*}
 f(\sigma_{CD}(x))=f(x)
 \qquad(x\in C).
\end{equation*}
Thus $\sigma_{CD}(p_C)=p_D$.  Consequently,
\begin{equation*}
 s:V(G^\prime)\to V(\hat G),
 \qquad s(C)=p_C,
\end{equation*}
is a graph homomorphism and the natural projection of $s(C)$ is $C$.  Hence
$s$ is a section.

It remains to prove the assertions about $G^\prime$.  Let
$\pi:V(G)\to V(G^\prime)$ send a vertex to its visible component.  For a
function $g:V(G^\prime)\to\mathbb R$, put $\widetilde g=g\circ\pi$.  Fix
$x\in C$. A direct computation shows
\begin{equation*}
 L_G\widetilde g(x)=L_{G^\prime}g(C),
 \qquad
 \Gamma_G(\widetilde g)(x)=\Gamma_{G^\prime}(g)(C).
\end{equation*}
Both quantities on the left are constant on each visible component.  The definition of
$\Gamma_2$ yields
\begin{equation*}
 \Gamma_{2,G}(\widetilde g)(x)=\Gamma_{2,G^\prime}(g)(C).
\end{equation*}
Since $G$ satisfies $\CD(2,\infty)$, it follows that
$G^\prime$ satisfies $\CD(2,\infty)$ as well.

Finally, suppose that $G^\prime$ is not a singleton.  It is connected, so the
Lichnerowicz bound gives $\lambda_1(G^\prime)\ge2$.  If equality held, there
would be a nonzero function $g$ satisfying
$L_{G^\prime}g=-2g$.  The Laplacian identity above would imply
$\widetilde g\in E_2(G)$.  For adjacent components $C,D$, any matching edge
between them is collapsed, so every function in $E_2(G)$ takes the same value
at its endpoints.  Hence $g(C)=g(D)$.  Connectedness of $G^\prime$ would make
$g$ constant, contradicting $L_{G^\prime}g=-2g$ and $g\ne0$.  Therefore
$\lambda_1(G^\prime)>2$.
\end{proof}

\begin{example}
    \label{ex:bundle-exception}
        \Cref{thm:BundleStructure} cannot, in general, be strengthened to assert that $G$ is a $G[V(H)]$-bundle over $G^\prime$. Let $K_4$ denote the complete graph on $\{1,2,3,4\}$. Consider the graph $G$ obtained from $K_4\square  (K_4\setminus\{14\})$ by deleting the edge joining $(4,2)$ to $(4,3)$. Then $G$ is Lichnerowicz-sharp. However, if $C$ ranges over the connected components of $G_{\vis}$, then the disjoint union of the induced subgraphs $G[V(C)]$
consists of three copies of $K_4\setminus\{14\}$ and one copy of $C_4$.
\begin{figure}[htbp]
    \centering
  \scalebox{0.82}{

\begin{tikzpicture}[
  x=1.15cm,
  y=1.05cm,
  line cap=round,
  line join=round,
  inter-fiber edge/.style={draw=black,line width=0.45pt,densely dashed},
  base edge/.style={draw=black,line width=0.9pt},
  fiber edge/.style={line width=1.25pt},
  fiber vertex/.style={circle,fill=white,line width=0.75pt,
    minimum size=4.2pt,inner sep=0pt},
  base vertex/.style={circle,draw=black,fill=black,
    minimum size=4.6pt,inner sep=0pt}
]

\begin{scope}[shift={(-4.0,-0.5)}]
  \coordinate (f1-1) at (-0.75,-0.65);
  \coordinate (f1-2) at (-0.75, 0.65);
  \coordinate (f1-3) at ( 0.75,-0.20);
  \coordinate (f1-4) at ( 0.75, 1.10);
\end{scope}
\begin{scope}[shift={(-1.4,1.7)}]
  \coordinate (f2-1) at (-0.75,-0.65);
  \coordinate (f2-2) at (-0.75, 0.65);
  \coordinate (f2-3) at ( 0.75,-0.20);
  \coordinate (f2-4) at ( 0.75, 1.10);
\end{scope}
\begin{scope}[shift={(1.5,-1.0)}]
  \coordinate (f3-1) at (-0.75,-0.65);
  \coordinate (f3-2) at (-0.75, 0.65);
  \coordinate (f3-3) at ( 0.75,-0.20);
  \coordinate (f3-4) at ( 0.75, 1.10);
\end{scope}
\begin{scope}[shift={(4.1,1.2)}]
  \coordinate (f4-1) at (-0.75,-0.65);
  \coordinate (f4-2) at (-0.75, 0.65);
  \coordinate (f4-3) at ( 0.75,-0.20);
  \coordinate (f4-4) at ( 0.75, 1.10);
\end{scope}

\foreach \j in {2,3,4}{
  \foreach \a/\b in {1/2,1/3,1/4,2/3,2/4,3/4}{
    \draw[inter-fiber edge] (f\a-\j)--(f\b-\j);
  }
}

\foreach \a/\b in {1/2,1/3,1/4,2/3,2/4,3/4}{
  \draw[base edge] (f\a-1)--(f\b-1);
}

\foreach \a/\b in {1/2,1/3,2/3,2/4,3/4}{
  \draw[fiber edge,draw=red!72!black]   (f1-\a)--(f1-\b);
  \draw[fiber edge,draw=green!55!black] (f2-\a)--(f2-\b);
  \draw[fiber edge,draw=blue!75!black]  (f3-\a)--(f3-\b);
}

\foreach \a/\b in {1/2,1/3,2/4,3/4}{
  \draw[fiber edge,draw=orange!88!black] (f4-\a)--(f4-\b);
}

\foreach \f/\col in {1/red!72!black,2/green!55!black,
                       3/blue!75!black,4/orange!88!black}{
  \foreach \j in {1,2,3,4}{
    \node[fiber vertex,draw=\col] at (f\f-\j) {};
  }
}

\foreach \f in {1,2,3,4}{
  \node[base vertex] at (f\f-1) {};
}

\node[font=\small,text=red!72!black] at (-4.25,1.15)
  {$F_1$};
\node[font=\small,text=green!55!black] at (-1.65,3.45)
  {$F_2$};
\node[font=\small,text=blue!75!black] at (1.5,-2.15)
  {$F_3$};
\node[font=\small,text=orange!88!black] at (4.1,2.80)
  {$F_4$};
\end{tikzpicture}}
  \caption{The figure of $G$ in \Cref{ex:bundle-exception}}
\end{figure}

In fact, the identity map on \(\mathbb R^{V(G)}=\mathbb R^{V(\hat G)}\) induces an isomorphism $E_2(G)\cong E_2(\hat{G})$. Consequently, the \(2\)-eigenspace alone cannot detect the additional edges lying inside the visible components.
\end{example}

The preceding example, together with
\Cref{ex:four-dimensional-star-twist}, illustrates the two difficulties that
arise in general: \(G\) may differ from \(\hat G\), and its visible components
may fail to be hypercubes. The degree condition in \Cref{thm:hypercube-bundle} rules out both phenomena. We now prove this result.
\begin{proof}[Proof of \Cref{thm:hypercube-bundle}]
Let $\mathcal C$ be the set of connected components of $G_{\vis}$.  The
quotient graph $G^\prime$ on $\mathcal C$ is connected.  Moreover,
\Cref{thm:component-matching} shows that adjacent visible components are
isomorphic.  Hence all members of $\mathcal C$ are mutually isomorphic.  By
\Cref{cor:component-size}, they are therefore $d$-regular for one common
integer $d$.  Notice that $d\geq1$: otherwise every edge of the connected
graph $G$ would be collapsed, forcing every function in $E_2(G)$ to be
constant.

Fix $C\in\mathcal C$.  Let $q_C$ be the common value of $\deg_G$ on $C$, set
$r_C=\deg_{G^\prime}(C)$, and let $e_C(x)$ be the number of
neighbours of $x$ that lie in $C$.  By
\Cref{thm:component-matching}, every $x\in C$ has exactly one neighbour in
each visible component adjacent to $C$.  Consequently,
\begin{equation}\label{eq:regular-degree-decomposition}
 q_C=d+r_C+e_C(x).
\end{equation}
In particular, $e_C(x)$ is constant over $C$.

Choose $f\in E_2(G)$ satisfying \eqref{eq:visible-strict}, and let $p_C$ be
the unique maximizer of $f|_C$.  If $p_C$ had a collapsed neighbour $y\in C$,
then $f(y)=f(p_C)$, contradicting the uniqueness of $p_C$.  Thus
$e_C(p_C)=0$, and consequently $e_C\equiv0$ on $C$.  Since $C$ was arbitrary,
there are no collapsed edges whose endpoints lie in the same visible
component.  Therefore
\begin{equation}\label{eq:regular-G-equals-Ghat}
 G=\hat G.
\end{equation}
Equation \eqref{eq:regular-degree-decomposition} now becomes
\begin{equation}\label{eq:component-degree-decomposition}
 q_C=d+r_C.
\end{equation}

We next show that every visible component satisfies $\CD(2,\infty)$.  Fix a
component $C$ and a vertex $x\in C$.   We claim that the
curvature matrix has the block form
\begin{equation}\label{eq:curvature-fiber-base-splitting}
 A_\infty^G(x)=
 \begin{pmatrix}
  A_\infty^C(x)&0\\
  0&B_x
 \end{pmatrix},
\end{equation}
where $B_x$ is the block indexed by neighbours in the other visible components.

Indeed, let $v\sim_C x$, let $D\sim_{G^\prime}C$, and put
\begin{equation*}
 x_D=\sigma_{CD}(x),
 \qquad
 v_D=\sigma_{CD}(v).
\end{equation*}
The vertices $v$ and $x_D$ are not adjacent, because the edges between $C$
and $D$ form the matching induced by $\sigma_{CD}$.  Their common neighbours
are precisely $x$ and $v_D$.  Indeed, the matching property gives the claimed
uniqueness inside $C\cup D$.  A common neighbour in a third visible component
would be joined to both $v$ and $x_D$ by collapsed edges and would therefore
give
\[
 \Phi(v)=\Phi(x_D)=\Phi(x),
\]
contrary to the visibility of $xv$.  Applying the same argument to $x$ and
$v_D$ shows that their only common neighbours are $v$ and $x_D$.  Thus
$v_D\in S_2^G(x)$, $d_x^-(v_D)=2$, and
\begin{equation}
        \label{eq:MixedBlock}
 \varepsilon_{v,x_D}=0,
 \qquad
 \omega_{v,x_D}=\frac12.
\end{equation}
Thus, \eqref{eq:A-offdiag} gives
\begin{equation*}
 (A_\infty^G(x))_{v,x_D}=0.
\end{equation*}
This proves that off-diagonal blocks are $0$.

For two neighbours $v,w\in C$ of $x$, the perfect matching property also
shows that every common neighbour of $v$ and $w$ in $S_2^G(x)$ lies in $C$:
a vertex outside $C$ adjacent to both would have two neighbours in $C$.
Furthermore, if $z\in S_2^C(x)$ and $D\sim_{G^\prime}C$, then $x_D$ is not
adjacent to $z$, again by the matching property.  It follows that
$d_x^-(z)$ has the same value in $G$ and in $C$.  Hence the quantities
$\varepsilon_{vw}$ and $\omega_{vw}$ computed in $G$ agree with those
computed in $C$.  No neighbour $x_D$ of $x$ outside $C$ is adjacent to $v$,
so the corresponding values of $t_v$ also agree.  Finally, each of the $r_C$
neighbours of $x$ outside $C$ contributes $1/2$ to $\Omega_v$ by
\eqref{eq:MixedBlock}.  Therefore
\begin{equation*}
 \Omega_v^G=\Omega_v^C+\frac{r_C}{2}.
\end{equation*}
Since $\deg_G(x)=\deg_G(v)=q_C$ and $C$ is $d$-regular, formulas
\eqref{eq:component-degree-decomposition} and \eqref{eq:CKLP-diag} yield
\begin{align*}
 (A_\infty^G(x))_{vv}
 &=3-q_C+\frac52t_v+2\Omega_v^G\\
 &=3-d+\frac52t_v+2\Omega_v^C
  =(A_\infty^C(x))_{vv}.
\end{align*}
This proves \eqref{eq:curvature-fiber-base-splitting}.

Since $G$ satisfies $\CD(2,\infty)$, we have $A_\infty^G(x)\succeq2I$.
Taking the fiber block in \eqref{eq:curvature-fiber-base-splitting} gives
\begin{equation*}
 A_\infty^C(x)\succeq2I.
\end{equation*}
As $x\in C$ was arbitrary, $C$ satisfies $\CD(2,\infty)$.  

It remains to identify $C$.  In view of
\eqref{eq:regular-G-equals-Ghat}, every neighbour of $x\in C$ outside $C$ is
joined to $x$ by a collapsed edge.  Hence the function $f$ chosen above
satisfies
\begin{equation*}
 L_C(f|_C)=(L_Gf)|_C=-2f|_C.
\end{equation*}
Moreover, $f|_C$ changes along every edge of $C$ by
\eqref{eq:visible-strict}, so it is nonconstant.  Thus $2$ is a nonzero
Laplacian eigenvalue of $C$.  The Lichnerowicz bound for $C$ gives
$\lambda_1(C)\geq2$, and consequently $\lambda_1(C)=2$.  The same function
$f|_C\in E_2(C)$ shows that every edge of $C$ is visible with respect to
$E_2(C)$.  Therefore \Cref{thm:hypercube-rigidity} gives $C\cong H_d$.
Finally, \Cref{thm:BundleStructure} and
\eqref{eq:regular-G-equals-Ghat} show that $G$ is an $H_d$-bundle over
$G^\prime$.
\end{proof}

\section{Spectral properties and non-hypercube fibers}
\label{sec:horizontal-realization}
\subsection{The hypercube spectrum of the visible component}
Although the visible component of a Lichnerowicz-sharp graph is not necessarily a hypercube, the structure described in \Cref{thm:intrinsic-Gamma} can determine the whole adjacency and Laplacian spectrum.  Motivated by the structure, we introduce
a class of layered graphs, called twisted \(D\)-dimensional hypercubes.  We
prove that every twisted \(D\)-dimensional hypercube is adjacency- and
Laplacian-cospectral with \(H_D\) by constructing an orthogonal conjugacy
between its adjacency operator and a diagonal operator with the hypercube
spectrum.
\begin{definition}[Twisted \(D\)-dimensional hypercube]
\label{def:twisted-hypercube}
Let \(H\) be a connected graph. We call \(H\) a twisted
\(D\)-dimensional hypercube if there exists \(p\in V(H)\) such that, with $L_k=S_k^H(p)\ (0\leq k\leq D)$,
\begin{enumerate}
  \item the sets \(L_0,\ldots,L_D\) partition \(V(H)\);
  \item\label{en:TwistedIncidence} every \(x\in L_k\)
has exactly \(k\) neighbors in \(L_{k-1}\), exactly \(D-k\) neighbors
in \(L_{k+1}\), and no neighbors in \(L_k\);
\item for any two
distinct vertices \(x,y\in L_k\),
\[
\bigl|S_1^H(x)\cap S_1^H(y)\cap L_{k-1}\bigr|
=
\bigl|S_1^H(x)\cap S_1^H(y)\cap L_{k+1}\bigr|
\in\{0,1\},
\]
\end{enumerate}
\end{definition}

Let $H$ be a twisted $D$-dimensional hypercube. For $\mathbb{R}^{V(H)}$, we have the following direct sum decomposition
\begin{equation*}
  \mathbb{R}^{V(H)}=\bigoplus_{k=0}^D\mathbb{R}^{L_k}.
\end{equation*}
Let $\one_{x}$ denote the indicator function of the vertex $x\in V(H)$. $\{\one_{x}:x\in V(H)\}$ forms an orthonormal basis of $\mathbb{R}^{V(H)}$. 
Define the following operator by
\begin{equation*}
   \mathrm{U}\one_{x}
 =\sum_{y\in\Ch(x)}\one_{y},\qquad\text{ for every }x\in V(H).
\end{equation*}
Let $\mathrm{D}$ be the adjoint of $\mathrm{U}$. Thus, for $y\in V(H)$,
\begin{equation*}
 \mathrm{D}\one_{y}
 =\sum_{z\in\Par(y)}\one_{z}.
\end{equation*}
Finally, define the operator $\mathrm{H}_0$ by
\begin{equation}\label{eq:weight-operator-spectrum}
 \mathrm{H}_0|_{\mathbb{R}^{L_k}}=(D-2k)I_{\mathbb{R}^{L_k}}.
\end{equation}

\begin{lemma}
\label{lem:up-down-commutator-spectrum}
The operators $\mathrm{D},\mathrm{U}$ and $\mathrm{H}_0$ satisfy
\begin{equation}\label{eq:sl2-first-spectrum}
 [\mathrm{D},\mathrm{U}]=\mathrm{H}_0,
\end{equation}
and
\begin{equation}\label{eq:sl2-other-spectrum}
 [\mathrm{H}_0,\mathrm{D}]=2\mathrm{D},
 \qquad
 [\mathrm{H}_0,\mathrm{U}]=-2\mathrm{U}.
\end{equation}
Moreover, the adjacency operator of $H$ is
\begin{equation}\label{eq:adjacency-up-down-spectrum}
 A_H=\mathrm{U}+\mathrm{D}.
\end{equation}
\end{lemma}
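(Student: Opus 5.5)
We verify all three identities on the orthonormal basis \(\{\one_x\}\), using only the layer structure of \Cref{def:twisted-hypercube}. Throughout, \(\mathrm U\) maps \(\mathbb R^{L_k}\) to \(\mathbb R^{L_{k+1}}\), \(\mathrm D\) maps \(\mathbb R^{L_k}\) to \(\mathbb R^{L_{k-1}}\), and \(\mathrm H_0\) acts by the scalar \(D-2k\) on \(\mathbb R^{L_k}\).

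\textbf{Adjacency and the two easy commutators.} For \eqref{eq:adjacency-up-down-spectrum}, condition \ref{en:TwistedIncidence} says that every neighbour of \(x\in L_k\) lies in \(L_{k-1}\) or \(L_{k+1}\). Hence \(S_1^H(x)=\Par(x)\sqcup\Ch(x)\), and so \(A_H\one_x=(\mathrm U+\mathrm D)\one_x\). The relations \eqref{eq:sl2-other-spectrum} are pure grading checks. For \(x\in L_k\) we have \(\mathrm H_0\mathrm D\one_x=(D-2k+2)\mathrm D\one_x\) and \(\mathrm D\mathrm H_0\one_x=(D-2k)\mathrm D\one_x\), whose difference is \(2\mathrm D\one_x\). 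The same computation for \(\mathrm U\) gives \(-2\mathrm U\one_x\).

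\textbf{The main identity \([\mathrm D,\mathrm U]=\mathrm H_0\).} Fix \(x\in L_k\) and compute the coefficient of \(\one_y\) in \(\mathrm D\mathrm U\one_x\) and in \(\mathrm U\mathrm D\one_x\), where \(y\in L_k\).

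\emph{Diagonal coefficients (\(y=x\)).} The coefficient in \(\mathrm D\mathrm U\one_x\) is \(|\Ch(x)|=D-k\). The coefficient in \(\mathrm U\mathrm D\one_x\) is \(|\Par(x)|=k\). Their difference is \(D-2k\), which is exactly the eigenvalue of \(\mathrm H_0\) on \(\mathbb R^{L_k}\).

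\emph{Off-diagonal coefficients (\(y\neq x\)).} The coefficient in \(\mathrm D\mathrm U\one_x\) is \(|\Ch(x)\cap\Ch(y)|\). The coefficient in \(\mathrm U\mathrm D\one_x\) is \(|\Par(x)\cap\Par(y)|\). These two numbers are equal by the third condition of \Cref{def:twisted-hypercube}, so the off-diagonal part of the commutator vanishes.

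\emph{Boundary layers.} For \(k=0\) and \(k=D\), the same count works with \(\Par\) or \(\Ch\) empty. The third condition is only needed for \(1\leq k\leq D-1\). In the extreme layers no nontrivial off-diagonal term arises: \(L_0\) is a singleton, and for \(k=D\) we have \(\Ch=\varnothing\) while \(|\Par(x)\cap\Par(y)|\) must be compared with \(0\). At \(L_D\) this comparison still needs justification. If the definition's third condition is read for all \(k\), it holds as stated. Otherwise I would argue that \(|L_D|=\binom DD=1\), which follows by the same double counting as in \Cref{cor:component-size}, so \(L_D\) has no distinct pairs.

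\textbf{Expected obstacle.} The only real content is the off-diagonal cancellation in \([\mathrm D,\mathrm U]\), which is exactly the balance condition in the definition; everything else is bookkeeping. The step needing the most care is the layer \(L_D\), and for it I would invoke the size count \(|L_D|=1\).
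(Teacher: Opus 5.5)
Your proposal is correct and follows essentially the same route as the paper. The coefficients of $\mathrm D\mathrm U$ and $\mathrm U\mathrm D$ on the basis $\one_x$ are $|\Ch(x)\cap\Ch(y)|$ and $|\Par(x)\cap\Par(y)|$; these balance off the diagonal by the third condition and differ by $D-2k$ on the diagonal, while the other two commutators and $A_H=\mathrm U+\mathrm D$ follow from the grading. Your extra check that $|L_D|=1$ is a harmless safeguard, since the definition as stated in that section already imposes the balance condition on every layer.
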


\begin{proof}
Let \(x\in L_k\) and \(y\in L_\ell\).  From the definitions of
\(\mathrm U\) and \(\mathrm D=\mathrm U^*\),
\begin{align*}
 \langle\one_y,\mathrm D\mathrm U\one_x\rangle
 &=|\Ch(x)\cap\Ch(y)|,\\
 \langle\one_y,\mathrm U\mathrm D\one_x\rangle
 &=|\Par(x)\cap\Par(y)|.
\end{align*}
Both quantities vanish when \(k\ne\ell\).  If \(k=\ell\) and \(x\ne y\),
they are equal by \Cref{def:twisted-hypercube}.  For \(x=y\), the same
definition gives
\begin{equation*}
 |\Ch(x)|-|\Par(x)|=(D-k)-k=D-2k.
\end{equation*}
It follows that
\begin{equation*}
 (\mathrm D\mathrm U-\mathrm U\mathrm D)
 \big|_{\mathbb R^{L_k}}=(D-2k)I_{\mathbb R^{L_k}}.
\end{equation*}
Together with \eqref{eq:weight-operator-spectrum}, this proves
\eqref{eq:sl2-first-spectrum}.

Now let \(1\le k\le D\) and \(\xi\in\mathbb R^{L_k}\).  Since
\(\mathrm D\xi\in\mathbb R^{L_{k-1}}\),
\begin{align*}
 (\mathrm H_0\mathrm D-\mathrm D\mathrm H_0)\xi
 &=\bigl(D-2(k-1)-(D-2k)\bigr)\mathrm D\xi\\
 &=2\mathrm D\xi.
\end{align*}
This identity also holds on \(\mathbb R^{L_0}\), because
\(\mathrm D\mathbb R^{L_0}=0\).  Similarly, for \(0\le k\le D-1\) and
\(\xi\in\mathbb R^{L_k}\),
\begin{align*}
 (\mathrm H_0\mathrm U-\mathrm U\mathrm H_0)\xi
 &=\bigl(D-2(k+1)-(D-2k)\bigr)\mathrm U\xi\\
 &=-2\mathrm U\xi.
\end{align*}
At \(k=D\), the same identity follows from
\(\mathrm U\mathbb R^{L_D}=0\).  This proves
\eqref{eq:sl2-other-spectrum}.

Finally, adjacent vertices have distances from \(p\) differing by at most
one, while \Cref{def:twisted-hypercube} excludes edges within a layer.
Therefore, for every \(x\in V(H)\),
\begin{equation*}
 A_H\one_x
 =\sum_{y\sim_H x}\one_y
 =\mathrm U\one_x+\mathrm D\one_x.
\end{equation*}
Since the vectors \(\one_x\), \(x\in V(H)\), form a basis, this proves
\eqref{eq:adjacency-up-down-spectrum}.
\end{proof}

Let us recall the exponential of a linear operator $T$ on a finite-dimensional vector space $V$:
\begin{equation*}
 \exp(T):=\sum_{n=0}^\infty\frac{T^n}{n!}.
\end{equation*}
It is standard that the exponential series converges in finite dimensions.

\begin{lemma}
\label{lem:orthogonal-rotation-spectrum}
Let
\begin{equation}\label{eq:K-operator-spectrum}
 \mathrm{K}:=\frac12(\mathrm{D}-\mathrm{U}).
\end{equation}
Then $\mathrm{K}^*=-\mathrm{K}$, and
\begin{equation}\label{eq:orthogonal-conjugacy-spectrum}
 A_H
 =\exp\!\left(-\frac{\pi}{2}\mathrm{K}\right)
  \mathrm{H}_0
  \exp\!\left(\frac{\pi}{2}\mathrm{K}\right).
\end{equation}
In particular, $A_H$ and $\mathrm{H}_0$ are orthogonally similar.
\end{lemma}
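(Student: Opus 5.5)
Since $\mathrm U^*=\mathrm D$ by definition, we get $\mathrm K^*=\tfrac12(\mathrm U-\mathrm D)=-\mathrm K$. Hence $\exp(t\mathrm K)$ is orthogonal for every real $t$, with inverse $\exp(-t\mathrm K)$. The identity \eqref{eq:orthogonal-conjugacy-spectrum} is then the classical $\mathfrak{sl}_2$ statement that rotating by $\pi/2$ turns the Cartan element into the "$x$-generator" $e+f$. The plan is to verify it using only the commutation relations of \Cref{lem:up-down-commutator-spectrum}.

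Set $\mathrm X:=\mathrm U+\mathrm D=A_H$ and $\mathrm Y:=\mathrm H_0$. From \eqref{eq:sl2-first-spectrum} and \eqref{eq:sl2-other-spectrum} we compute
\[
[\mathrm K,\mathrm H_0]=\tfrac12\bigl([\mathrm D,\mathrm H_0]-[\mathrm U,\mathrm H_0]\bigr)=\tfrac12(-2\mathrm D-2\mathrm U)=-\mathrm X,
\]
and
\[
[\mathrm K,\mathrm X]=\tfrac12\bigl([\mathrm D,\mathrm U]-[\mathrm U,\mathrm D]\bigr)=[\mathrm D,\mathrm U]=\mathrm H_0 .
\]
Here we used $[\mathrm D,\mathrm D]=[\mathrm U,\mathrm U]=0$. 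Thus $\operatorname{ad}_{\mathrm K}$ preserves the two-dimensional space $\Span\{\mathrm H_0,\mathrm X\}$ and acts on it by the rotation generator $\mathrm H_0\mapsto-\mathrm X$, $\mathrm X\mapsto\mathrm H_0$.

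Next define $F(t):=\exp(-t\mathrm K)\,\mathrm H_0\,\exp(t\mathrm K)$. Differentiating gives $F'(t)=\exp(-t\mathrm K)[\mathrm H_0,\mathrm K]\exp(t\mathrm K)$. Similarly, with $G(t):=\exp(-t\mathrm K)\mathrm X\exp(t\mathrm K)$, we get $G'(t)=\exp(-t\mathrm K)[\mathrm X,\mathrm K]\exp(t\mathrm K)$. By the two brackets above, $[\mathrm H_0,\mathrm K]=\mathrm X$ and $[\mathrm X,\mathrm K]=-\mathrm H_0$, so $F'=G$ and $G'=-F$, with $F(0)=\mathrm H_0$ and $G(0)=\mathrm X$. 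By uniqueness for this linear ODE system in the finite-dimensional operator space, $F(t)=\cos t\,\mathrm H_0+\sin t\,\mathrm X$. Setting $t=\pi/2$ gives $F(\pi/2)=\mathrm X=A_H$, which is \eqref{eq:orthogonal-conjugacy-spectrum}. An equivalent route is the Hadamard expansion $e^{-t\mathrm K}\mathrm H_0e^{t\mathrm K}=\sum_n\frac{t^n}{n!}(-\operatorname{ad}_{\mathrm K})^n\mathrm H_0$, which sums to the same cosine and sine series.

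The only step needing care is the sign bookkeeping in the two brackets and in the derivative $\frac{d}{dt}e^{-t\mathrm K}\mathrm Ae^{t\mathrm K}=e^{-t\mathrm K}[\mathrm A,\mathrm K]e^{t\mathrm K}$. If a sign comes out reversed, the result is $F(\pi/2)=-A_H$, and one corrects it by switching to $t=-\pi/2$. Since the statement fixes $\exp(\mp\frac{\pi}{2}\mathrm K)$, I would double-check these signs, but I do not expect a genuine obstacle. Orthogonal similarity then follows from the orthogonality of $\exp(\frac{\pi}{2}\mathrm K)$. Combined with \eqref{eq:weight-operator-spectrum} and $|L_k|=\binom Dk$, this gives the adjacency spectrum $\{(D-2k)^{\binom Dk}\}$; regularity then yields the Laplacian spectrum.
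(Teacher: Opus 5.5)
Your proof is correct and is essentially the paper's argument. Both derive $[\mathrm K,\mathrm H_0]=-A_H$ and $[\mathrm K,A_H]=\mathrm H_0$ from \Cref{lem:up-down-commutator-spectrum} and then solve the resulting linear ODE for the conjugated curve. The paper uses $e^{\theta\mathrm K}\mathrm H_0e^{-\theta\mathrm K}$, a second-order equation, evaluated at $\theta=-\pi/2$, which is your $F$ at $t=\pi/2$. Your signs are right as written, so the fallback to $t=-\pi/2$ is not needed.
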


\begin{proof}
Because $\mathrm{D}=\mathrm{U}^*$, equation
\eqref{eq:K-operator-spectrum} gives $\mathrm{K}^*=-\mathrm{K}$.  Thus
\begin{equation*}
 Q_\theta:=\exp(\theta\mathrm{K})
\end{equation*}
is orthogonal for every real $\theta$.

Put $\mathrm{A}:=\mathrm{U}+\mathrm{D}=A_H$.  From
\eqref{eq:sl2-first-spectrum}--\eqref{eq:sl2-other-spectrum},
\begin{align}
 [\mathrm{K},\mathrm{H}_0]
 &=\frac12\bigl([\mathrm{D},\mathrm{H}_0]-[\mathrm{U},\mathrm{H}_0]\bigr)
 =-\mathrm{A},
 \label{eq:K-H-commutator-spectrum}\\
 [\mathrm{K},\mathrm{A}]
 &=\frac12[\mathrm{D}-\mathrm{U},\mathrm{D}+\mathrm{U}]
 =[\mathrm{D},\mathrm{U}]
 =\mathrm{H}_0.
 \label{eq:K-A-commutator-spectrum}
\end{align}
Define
\begin{equation*}
 F(\theta):=Q_\theta\mathrm{H}_0Q_\theta^{-1}.
\end{equation*}
Then by \eqref{eq:K-H-commutator-spectrum}--\eqref{eq:K-A-commutator-spectrum},  $\ddot{F}(\theta)=-F(\theta)$. The solution of this second-order linear differential equation is
\begin{equation}\label{eq:F-solution-spectrum}
 F(\theta)=\mathrm{H}_0\cos\theta-\mathrm{A}\sin\theta.
\end{equation}
Substituting $\theta=-\pi/2$ gives
\eqref{eq:orthogonal-conjugacy-spectrum}.
\end{proof}

\begin{theorem}
\label{thm:hypercube-cospectrality}
Let $H$ be a twisted $D$-dimensional hypercube.  Its adjacency
spectrum is
\begin{equation}\label{eq:adjacency-spectrum-visible}
 \operatorname{Spec}(A_H)
 =\left\{
   (D-2k)^{\binom Dk}:0\le k\le D
  \right\}.
\end{equation}
Consequently,  for the Laplacian $L_H=A_H-DI$,
\begin{equation}\label{eq:negative-laplacian-spectrum-visible}
 \operatorname{Spec}(L_H)
 =\left\{
   (-2k)^{\binom Dk}:0\le k\le D
  \right\}.
\end{equation}
Thus $H$ is adjacency- and Laplacian-cospectral with the $D$-dimensional
hypercube $H_D$.
\end{theorem}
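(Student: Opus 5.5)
The plan is to read the spectrum directly off \Cref{lem:orthogonal-rotation-spectrum}. That lemma gives $A_H=\exp(-\frac{\pi}{2}\mathrm K)\,\mathrm H_0\,\exp(\frac{\pi}{2}\mathrm K)$, where $\exp(\frac{\pi}{2}\mathrm K)$ is orthogonal because $\mathrm K^*=-\mathrm K$. Hence $A_H$ and $\mathrm H_0$ are orthogonally similar and have the same spectrum with multiplicities. By \eqref{eq:weight-operator-spectrum}, $\mathrm H_0$ is diagonal in the decomposition $\mathbb R^{V(H)}=\bigoplus_k\mathbb R^{L_k}$, acting as the scalar $D-2k$ on $\mathbb R^{L_k}$. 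So its spectrum is $\{(D-2k)^{|L_k|}\}$, and the values $D-2k$ are distinct for distinct $k$.

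It remains to show $|L_k|=\binom Dk$ using only \Cref{def:twisted-hypercube}. I would count the edges between $L_k$ and $L_{k+1}$ in two ways, exactly as in \Cref{cor:component-size}. By condition \ref{en:TwistedIncidence}, each vertex of $L_k$ has $D-k$ neighbours in $L_{k+1}$, and each vertex of $L_{k+1}$ has $k+1$ neighbours in $L_k$. This gives $|L_k|(D-k)=|L_{k+1}|(k+1)$. Starting from $|L_0|=1$, induction yields $|L_k|=\binom Dk$, which proves \eqref{eq:adjacency-spectrum-visible}.

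For the Laplacian, condition \ref{en:TwistedIncidence} shows every vertex has degree $k+(D-k)=D$, so $H$ is $D$-regular and $L_H=A_H-DI$. Shifting the spectrum by $-D$ turns eigenvalue $D-2k$ into $-2k$ with the same multiplicity, which gives \eqref{eq:negative-laplacian-spectrum-visible}. Since $H_D$ is itself a twisted $D$-dimensional hypercube (with any vertex as $p$), or simply by its known spectrum, $H_D$ has the same adjacency and Laplacian spectra, and cospectrality follows.

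The only step with real content is already contained in \Cref{lem:orthogonal-rotation-spectrum}, namely that the $\mathfrak{sl}_2$ relations force $\ddot F=-F$ with the initial data $F(0)=\mathrm H_0$ and $\dot F(0)=[\mathrm K,\mathrm H_0]=-\mathrm A$. Given that lemma, the main remaining care is the bookkeeping of multiplicities via the layer sizes, which is routine.
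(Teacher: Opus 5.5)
Your proposal is correct and follows the paper's proof essentially verbatim: you get $|L_k|=\binom Dk$ by double counting the edges between $L_k$ and $L_{k+1}$, use the orthogonal similarity $A_H\sim\mathrm H_0$ from \Cref{lem:orthogonal-rotation-spectrum}, and shift by $-D$ using $D$-regularity. Your remark that $H_D$ is itself a twisted $D$-dimensional hypercube is a nice touch, since the same argument then yields the hypercube spectrum instead of the paper citing it.
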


\begin{proof}
The $D$-regularity of $H$ is from \ref{en:TwistedIncidence}. Moreover, the same double-counting argument as in the proof of
\Cref{cor:component-size} applies here: counting the edges between \(L_k\) and
\(L_{k+1}\) from the two sides gives
\begin{equation*}
 |L_k|(D-k)=|L_{k+1}|(k+1)
 \qquad(0\le k<D).
\end{equation*}
Since \(L_0=\{p\}\), it follows inductively that
\begin{equation*}
 |L_k|=\binom Dk
 \qquad(0\le k\le D).
\end{equation*}

By \Cref{lem:orthogonal-rotation-spectrum}, \(A_H\) is orthogonally similar to
\(\mathrm H_0\).  Since \(\mathrm H_0\) acts on \(\mathbb R^{L_k}\) as
multiplication by \(D-2k\), the preceding layer sizes give
\eqref{eq:adjacency-spectrum-visible}.  Because \(H\) is \(D\)-regular,
\(L_H=A_H-DI\), so \eqref{eq:negative-laplacian-spectrum-visible} follows by
shifting each adjacency eigenvalue by \(-D\).  The adjacency and Laplacian of $D$-dimensional hypercube $H_D$ is exactly \Cref{eq:adjacency-spectrum-visible} and \Cref{eq:negative-laplacian-spectrum-visible} \cite[Section 1.4.6]{Brouwer2012}.
\end{proof}

\subsection{Non-hypercube fibers}
\label{sec:NonHypercubeFibers}
In this subsection, we want to show that there exist Lichnerowicz-sharp graphs whose visible components are $D$-regular and not isomorphic to the hypercube $H_D$ for any $D\geq 4$.

First, we generalize the graph $H$ in \Cref{ex:four-dimensional-star-twist} to every even degree.  Let $D=2m$ with $m\geq1$, put
$[D]=\{1,\ldots,D\}$, and set
\begin{equation}\label{eq:star-twisted-layers}
 L_k=\left\{(k,A):A\in\binom{[D]}{\min\{k,D-k\}}\right\},
 \qquad 0\leq k\leq D.
\end{equation}
The first coordinate in $(k,A)$ distinguishes vertices in $L_k$ and $L_{D-k}$.
Define a graph $H_D^\star$ on $\bigsqcup_{k=0}^D L_k$ by joining $L_k$ to $L_{k+1}$ according to
\begin{equation}\label{eq:star-twisted-incidence}
 (k,A)\sim(k+1,B)
 \quad\text{ if and only if}\quad
 \begin{cases}
  A\subset B,&0\leq k<m,\\
  B\subset A,&m\leq k<D.
 \end{cases}
\end{equation}
Thus the two truncated Boolean lattices are glued to the middle layer by the
same incidence relation.  This is the twist: the hypercube gluing would use
the complementary incidence relation on one side.

We obtain $G_D^\star$ from $H_D^\star$ by adding the edges in every
layer.  More precisely, for distinct vertices in the same layer, set
\begin{equation}\label{eq:star-twisted-horizontal-edges}
 (k,A)\sim_{G_D^\star}(k,B)
 \quad\text{ if and only if }\quad
 |A\mathbin{\triangle}B|=2.
\end{equation}
Consequently,
\begin{equation}\label{eq:star-twisted-degrees}
 \deg_{H_D^\star}(x)=D,
 \qquad
 \deg_{G_D^\star}(x)=D+k(D-k)
 \qquad(x\in L_k).
\end{equation}
For $D=4$, the identification
\begin{equation*}
 (0,\varnothing)=p,\quad (1,\{i\})=a_i,\quad
 (2,\{i,j\})=e_{ij},\quad (3,\{i\})=b_i,\quad
 (4,\varnothing)=q
\end{equation*}
recovers exactly the graph in \Cref{ex:four-dimensional-star-twist}.

\begin{proposition}\label{prop:even-dimensional-star-twist}
For every even $D\geq2$, the graph $G_D^\star$ satisfies
$\CD(2,\infty)$ and
\begin{equation*}
 \lambda_1(G_D^\star)=2,
 \qquad
 E_2(G_D^\star)=\operatorname{span}\{f\},
 \qquad
 f(k,A)=m-k.
\end{equation*}
In particular, $G_D^\star$ is Lichnerowicz-sharp and
\begin{equation*}
 (G_D^\star)_{\vis}=H_D^\star.
\end{equation*}
If $D\geq4$, then $H_D^\star$ is not isomorphic to the hypercube $H_D$.
\end{proposition}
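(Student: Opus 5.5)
We would verify the four assertions of \Cref{prop:even-dimensional-star-twist} in turn: that $f$ is a $2$-eigenfunction, that $\lambda_1=2$ with multiplicity one, that $\CD(2,\infty)$ holds, and that $H_D^\star\not\cong H_D$.

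\textbf{Eigenfunction.} For $x=(k,A)\in L_k$, the horizontal edges contribute nothing to $L f(x)$, since $f$ is constant on layers. By \eqref{eq:star-twisted-incidence}, $x$ has $k$ neighbours in $L_{k-1}$ and $D-k$ neighbours in $L_{k+1}$; on both truncated lattices and at the middle layer this is a count of sub/supersets. Hence
\[
Lf(x)=k-(D-k)=2k-D=-2(m-k)=-2f(x).
\]

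\textbf{Spectrum.} The group $S_D$ acts on $G_D^\star$ by permuting $[D]$, so we decompose $\mathbb{R}^{V}$ into isotypic components. On the $S_D$-invariant functions (functions of $k$ alone), $-L$ reduces to the path-type tridiagonal matrix of $H_D$'s distance quotient, whose spectrum is $0,2,4,\ldots,2D$. Hence $2$ occurs there exactly once, spanned by $f$. On each nontrivial isotypic component we use the Johnson-scheme structure of every layer: horizontal edges form Johnson graphs $J(D,j)$ with $j=\min\{k,D-k\}$. Their Laplacian eigenvalue on the $s$-th Johnson eigenspace is $s(D+1-s)$, and this is added to the vertical part. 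We want every resulting eigenvalue to be $>2$. Our plan is to note that the vertical part $-L_{H}$ is positive semidefinite, so it suffices that the horizontal Laplacian is $\geq 3$ on any vector orthogonal to layer-constant vectors. Indeed, for $1\le j$ and $s\ge1$ we have $s(D+1-s)\ge D\ge 4$, except when $D=2$, where we check directly. Combined with the Lichnerowicz bound \eqref{eq:LichnerowiczEstimate} (once $\CD(2,\infty)$ is known), this gives $\lambda_1=2$ and $E_2=\Span\{f\}$. Every horizontal edge is then collapsed and every edge of $H_D^\star$ is visible, so $(G_D^\star)_{\vis}=H_D^\star$.

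\textbf{Curvature, the main obstacle.} We compute $A_\infty(x)$ from \Cref{def:curMatrix} for $x=(k,A)$ with $k\le m$; the case $k>m$ follows by the symmetry $k\mapsto D-k$. The neighbours split into four classes: parents $A\setminus\{i\}$, children $A\cup\{j\}$, and horizontal neighbours $A-i+j$. Using the $\mathrm{Stab}(A)\cong S_k\times S_{D-k}$-symmetry, $A_\infty(x)$ commutes with this group. It therefore block-diagonalizes into invariant pieces: a small matrix on class-sums (of size $3$ or $4$), plus the standard-representation pieces of $S_k$, $S_{D-k}$ and of $S_k\times S_{D-k}$ acting on pairs $(i,j)$. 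Each piece is explicit in $D,k$, and the computation of $\varepsilon_{ij}$, $\omega_{ij}$ and $d_x^-(z)$ is a finite case analysis over $z\in S_2(x)$ (sets at symmetric difference $2$ or $4$ from $A$, in adjacent or same layers). We expect the middle layer $k=m$, where the parent and child incidences coincide, to need separate treatment. The goal is to show the smallest eigenvalue is exactly $2$. Since $f$ is an eigenfunction, \Cref{lem:defect-nullity} predicts that the vector $\bigl(f(y)-f(x)\bigr)_y$ is a $2$-eigenvector; we use this as a check, and show the remaining blocks are $\ge 2$ by explicit bounds. The $D=4$ values listed in \Cref{ex:four-dimensional-star-twist} serve as a consistency check. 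This case analysis is the bulk of the work.

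\textbf{Non-isomorphism.} For $D\ge4$, the vertices $(1,\{1\})$ and $(D-1,\{1\})$ have common neighbours $(m,B)$ with $1\in B$ only when $m=2$. In general, we compare the counts $|S_1\cap S_1|$ along the layers: in $H_D$ any two vertices at distance $2$ have exactly $2$ common neighbours. In $H_D^\star$, the vertices $(m-1,A)$ and $(m+1,A)$ with $|A|=m-1$ have distance $2$ and share the $m+1$ common neighbours $(m,A\cup\{j\})$, $j\notin A$. Since $m+1\ge3$, this contradicts the hypercube property.
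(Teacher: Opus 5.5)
Your spectral argument is correct and takes a genuinely different route from the paper. You split $\R^{V}$ into the layer-constant functions and their orthogonal complement. The layer-constant functions are the $S_D$-invariants, on which $-L_{G_D^\star}$ acts as the distance quotient of $H_D$, with spectrum $0,2,\dots,2D$. On the complement, $-L_{H_D^\star}\succeq0$ together with the Johnson-graph gap $\lambda_1(J(D,j))=D$ ($1\le j\le m$) gives $-L_{G_D^\star}\succeq D$; for $D=2$ the complement is one-dimensional with eigenvalue $4$. This yields $\lambda_1=2$ and $E_2=\Span\{f\}$ outright, so your appeal to the Lichnerowicz bound is unnecessary and the spectral claims do not depend on the curvature.

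The paper instead gets $\lambda_1\ge2$ from $\CD(2,\infty)$. For multiplicity one it uses $\lambda_1(H_D^\star)=2$, taken from its twisted-hypercube cospectrality theorem, plus only the connectivity of each horizontal layer. Equality in $\langle g,-L_{H_D^\star}g\rangle+\langle g,-L_{K_{\mathrm{hor}}}g\rangle\ge2\|g\|^2$ forces $g$ to be layer-constant, and a three-term recurrence finishes. Your version is self-contained and quantitative; the paper's needs less information about the horizontal graphs. Your non-isomorphism argument via $(m-1,R)$ and $(m+1,R)$ is the paper's.

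The genuine gap is $\CD(2,\infty)$, which is most of the proposition and which you only outline. The outline matches the paper's strategy:
\begin{itemize}
\item block-diagonalize $A_\infty(x)$ under $\mathrm{Stab}(A)$ into a $3\times3$ class-sum block, blocks on the zero-sum spaces over $A$ and over $[D]\setminus A$, and a scalar on doubly zero-sum arrays $(z_{ij})$;
\item treat the middle layer separately, since there the parents $(m-1,A\setminus\{i\})$ and the children $(m+1,A\setminus\{i\})$ are both indexed by $A$.
\end{itemize}
However, no entry is actually computed, and your description of $S_2(x)$ would produce wrong entries. Besides vertices at symmetric difference $4$ in the same layer, $S_2(x)$ contains grandparents and grandchildren two layers away, and vertices at symmetric difference $3$ in layers $k\pm1$.

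Crucially, take $x=(m-1,A)$ with children $C_j=(m,A\cup\{j\})$. The vertex $(m+1,A)$ (same set, two layers up) is the only common neighbour of $C_j$ and $C_\ell$ in layer $m+1$. It is adjacent to all $s=m+1$ children, so $d_x^-=s$ and $\omega_{C_jC_\ell}=r/4+1/s$ rather than the generic $r/4+1/2$. This is exactly where the twist enters the curvature matrix below the middle layer. It changes the $2\times2$ block on the zero-sum space over $[D]\setminus A$, which must then be rechecked to be $\succ2I$.

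To close the gap you need correct tables of $\varepsilon,\omega,t,\Omega$ and the degrees for four cases: the endpoint, $1\le r\le m-2$, $r=m-1$, and $r=m$. You then need a proof that every block is $\succeq2I$. The eigenvector check via the gradient of $f$ only shows that the value $2$ is attained, not that it is the minimum.
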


\begin{proof}
We first verify the curvature condition.  The involution
\begin{equation*}
 (k,A)\to(D-k,A)
\end{equation*}
is an automorphism of $G_D^\star$, so it is enough to consider the layers
$L_0,\ldots,L_m$.

Let $x=(0,\varnothing)$.  Its $D$ neighbours form a clique, and two distinct
neighbours have $\omega_{ij}=1/2$.  Hence
\begin{equation}\label{eq:star-twisted-endpoint-curvature}
 A_\infty(x)=(2D+2)I_D-2J_D,
 \qquad
 \operatorname{Spec}\bigl(A_\infty(x)\bigr)
 =\{2,(2D+2)^{D-1}\}.
\end{equation}

Next, fix $x=(r,A)\in L_r$, where $1\leq r<m$, and put $s=D-r$.
The neighbours of $x$ are naturally indexed as
\begin{equation*}
 P_i=(r-1,A\setminus\{i\}),\qquad
 C_j=(r+1,A\cup\{j\}),\qquad
 W_{ij}=(r,(A\setminus\{i\})\cup\{j\}),
\end{equation*}
where $i\in A$ and $j\in[D]\setminus A$.  Thus the three types have sizes
$r,s$, and $rs$, respectively.  Define
\begin{equation*}
 \eta_r=\begin{cases}
  \dfrac12,&1\leq r\leq m-2,\\[1mm]
  \dfrac1s,&r=m-1.
 \end{cases}
\end{equation*}
A count in $B_2(x)$ gives the following values from
\Cref{def:curMatrix}; the two vertices in each row are distinct, and any
relations among their indices are stated explicitly.
\begin{equation}\label{eq:star-twisted-omega-table}
\begin{array}{c|c|c}
 y,z&\varepsilon_{yz}&\omega_{yz}\\ \hline
 P_i,P_h&1&(s+2)/4\\
 C_j,C_\ell&1&r/4+\eta_r\\
 P_i,C_j&0&0\\
 P_i,W_{ij}\ &1&(r-1)/4\\
 P_i,W_{hj},\ i\ne h&0&1/4\\
 C_j,W_{ij}\ &1&(s-1)/4\\
 C_j,W_{i\ell},\ j\ne\ell&0&1/4\\
 W_{ij},W_{i\ell}&1&r/4\\
 W_{ij},W_{hj}&1&s/4\\
 W_{ij},W_{h\ell},\ i\ne h,\ j\ne\ell&0&1/4
\end{array}
\end{equation}
The only exceptional entry is the second row when $r=m-1$.  Across the
middle-layer, the relevant common second-sphere vertex has
$d_x^-=s$, so its contribution is $1/s$ instead of the usual $1/2$.
Furthermore,
\begin{align*}
 \deg(x)&=D+rs,&
 \deg(P_i)&=D+(r-1)(s+1),\\
 \deg(C_j)&=D+(r+1)(s-1),&
 \deg(W_{ij})&=D+rs
\end{align*}
and
\begin{equation*}
\begin{array}{rcl@{\qquad}rcl@{\qquad}rcl}
 t_{P_i} & = & D-1,
 &t_{C_j} & = & D-1,
 &t_{W_{ij}} & = & D,\\
 \Omega_{P_i} & = & \dfrac{(r-1)(3s+2)}4,
 &\Omega_{C_j} & = & (s-1)\left(\dfrac{3r}{4}+\eta_r\right),
 &\Omega_{W_{ij}} & = & \dfrac{3(rs-1)}4.
\end{array}
\end{equation*}
Thus the curvature matrix $A_\infty(x)$ is
\begin{equation}\label{eq:star-twisted-curvature-table}
\begin{array}{c|c}
 y,z&A_\infty(x)_{yz}\\ \hline
 P_i,P_i&\dfrac{rs+4r+s}{2}\\
 P_i,P_h,\ h\neq i&-2-\dfrac{s}{2}\\
 C_j,C_j&1+\dfrac{rs+r+2s}{2}+2(s-1)\eta_r\\
 C_j,C_\ell,\ \ell\neq j&-1-\dfrac{r}{2}-2\eta_r\\
 P_i,C_j&1\\
 P_i,W_{ij}&-\dfrac{r+1}{2}\\
 P_i,W_{hj},\ i\ne h&\dfrac12\\
 C_j,W_{ij}&-\dfrac{s+1}{2}\\
 C_j,W_{i\ell},\ j\ne\ell&\dfrac12\\
 W_{ij},W_{ij}&\dfrac{rs+3r+3s+3}{2}\\
 W_{ij},W_{i\ell},\ j\neq\ell&-1-\dfrac{r}{2}\\
 W_{ij},W_{hj},\ i\ne h&-1-\dfrac{s}{2}\\
 W_{ij},W_{h\ell},\ i\ne h,\ j\ne\ell&\dfrac12
\end{array}
\end{equation}

For $y\in S_1(x)$, let $\mathbf e_y$ be the corresponding coordinate vector
in $\mathbb R^{S_1(x)}$.  First define the three unit vectors
\begin{equation*}
 p_0=\frac1{\sqrt r}\sum_{i\in A}\mathbf e_{P_i},\qquad
 c_0=\frac1{\sqrt s}\sum_{j\notin A}\mathbf e_{C_j},\qquad
 w_0=\frac1{\sqrt{rs}}\sum_{\substack{i\in A\\j\notin A}}
       \mathbf e_{W_{ij}}.
\end{equation*}
Direct multiplication using \eqref{eq:star-twisted-curvature-table} shows that
$\operatorname{span}\{p_0,c_0,w_0\}$ is invariant under $A_\infty(x)$ and that,
with respect to the ordered orthonormal basis $(p_0,c_0,w_0)$, the restriction
of $A_\infty(x)$ is
\begin{equation}\label{eq:star-twisted-constant-block}
 T_{r,s}=\begin{pmatrix}
  s+2&\sqrt{rs}&-\sqrt{s}\\
  \sqrt{rs}&r+2&-\sqrt r\\
  -\sqrt{s}&-\sqrt r&m+4
 \end{pmatrix},
 \qquad
 \operatorname{Spec}(T_{r,s})=\{2,m+2,D+4\}.
\end{equation}

Next, for any unit vector $a=(a_i)_{i\in A}$ satisfying
$\sum_{i\in A}a_i=0$, set
\begin{equation*}
 p(a)=\sum_{i\in A}a_i\mathbf e_{P_i},\qquad
 w_A(a)=\frac1{\sqrt s}
 \sum_{\substack{i\in A\\j\notin A}}a_i\mathbf e_{W_{ij}}.
\end{equation*}
The vectors $p(a)$ and $w_A(a)$ are orthonormal, their span is invariant under
$A_\infty(x)$, and the restriction of $A_\infty(x)$ to this span, in the
ordered basis $(p(a),w_A(a))$, is
\begin{equation}\label{eq:star-twisted-row-block}
 R_{r,s}=\frac12\begin{pmatrix}
  rs+4r+2s+4&-(r+2)\sqrt{s}\\
  -(r+2)\sqrt{s}&4r+s+8
 \end{pmatrix}.
\end{equation}
It satisfies
\begin{equation}\label{eq:star-twisted-row-determinant}
 \det(R_{r,s}-2I)
 =\frac{3r^2s+16r^2+rs^2+12rs+16r+2s^2+4s}{4}>0.
\end{equation}
Since the first diagonal entry of $R_{r,s}-2I$ is positive,
$R_{r,s}\succ2I$.

Similarly, for any unit vector $b=(b_j)_{j\notin A}$ satisfying
$\sum_{j\notin A}b_j=0$, set
\begin{equation*}
 c(b)=\sum_{j\notin A}b_j\mathbf e_{C_j},\qquad
 w_B(b)=\frac1{\sqrt r}
 \sum_{\substack{i\in A\\j\notin A}}b_j\mathbf e_{W_{ij}}.
\end{equation*}
The span of the orthonormal pair $(c(b),w_B(b))$ is invariant.  If
$r\leq m-2$, the restriction of $A_\infty(x)$ to this span, in that ordered
basis, is $R_{s,r}$ and is therefore strictly larger than $2I$.  If $r=m-1$,
then $s=r+2$ and the restriction is instead
\begin{equation*}
 C_{r,s}^{\partial}=\frac12\begin{pmatrix}
  rs+2r+2s+8&-(s+2)\sqrt r\\
  -(s+2)\sqrt r&r+4s+8
 \end{pmatrix}.
\end{equation*}
In this case,
\begin{equation}
\label{eq:star-twisted-boundary-block}
 \det(C_{r,s}^{\partial}-2I)
 =\frac{(s+2)(r^2+3rs+4r+8s+8)}4>0,
\end{equation}
and the first diagonal entry of $C_{r,s}^{\partial}-2I$ is positive.  Hence
$C_{r,s}^{\partial}\succ2I$.

Finally, let $Z=(z_{ij})_{i\in A,\,j\notin A}$ satisfy
\begin{equation*}
 \sum_{j\notin A}z_{ij}=0\quad(i\in A),
 \qquad
 \sum_{i\in A}z_{ij}=0\quad(j\notin A),
\end{equation*}
and put
\begin{equation*}
 w(Z)=\sum_{\substack{i\in A\\j\notin A}}z_{ij}\mathbf e_{W_{ij}}.
\end{equation*}
Then direct multiplication gives
\begin{equation*}
 A_\infty(x)w(Z)=c_{r,s}w(Z),
 \qquad
 c_{r,s}=\frac{rs+4r+4s+8}{2}>2.
\end{equation*}
Let $\{a^{(\mu)}\}_{\mu=1}^{r-1}$ and
$\{b^{(\nu)}\}_{\nu=1}^{s-1}$ be orthonormal bases of the zero-sum
subspaces
\begin{equation*}
 \mathbb R_0^A
 :=\left\{a=(a_i)_{i\in A}\in\mathbb R^A:
           \sum_{i\in A}a_i=0\right\},
 \qquad
 \mathbb R_0^{[D]\setminus A}
 :=\left\{b=(b_j)_{j\notin A}\in\mathbb R^{[D]\setminus A}:
           \sum_{j\notin A}b_j=0\right\},
\end{equation*}
respectively. Then $\mathbb{R}^{S_1(x)}$ has the following orthogonal decomposition
\begin{equation}
\begin{aligned}
\label{eq:OrthogonalDecomposition}
    \mathbb{R}^{S_1(x)}=&\Span\{p_0,c_0,w_0\}\bigoplus_{\mu=1}^{r-1}\Span\{p(a^{(\mu)}),w_A(a^{(\mu)})\}\bigoplus_{\nu=1}^{s-1}\Span\{c(b^{(\nu)}),w_B(b^{(\nu)})\}\\
    &\bigoplus\left\{w((z_{ij})_{i\in A,j\not\in A}):\ \sum_{j\not\in A}z_{ij}=0,\ \sum_{i\in A}z_{ij}=0\right\}.
\end{aligned}
\end{equation}
These subspaces in \eqref{eq:OrthogonalDecomposition} are invariant under $A_{\infty}(x)$. Together with \eqref{eq:star-twisted-constant-block}, \eqref{eq:star-twisted-row-determinant} and \eqref{eq:star-twisted-boundary-block}, this proves 
\begin{equation*}
 \lambda_{\min}\bigl(A_\infty(x)\bigr)=2
 \qquad(1\leq r<m).
\end{equation*}

It remains to consider a middle-layer vertex $x=(m,A)$.  Put
$B=[D]\setminus A$ and write its neighbours as
\begin{equation*}
 P_i=(m-1,A\setminus\{i\}),\qquad
 Q_i=(m+1,A\setminus\{i\}),\qquad
 W_{ij}=(m,(A\setminus\{i\})\cup\{j\}),
\end{equation*}
where $i\in A$ and $j\in B$. 

A count in $B_2(x)$ gives the following values from
\Cref{def:curMatrix}; the two vertices in each row are distinct, and any
relations among their indices are stated explicitly.
\begin{equation}\label{eq:star-twisted-middle-omega-table}
\begin{array}{c|c|c}
 y,z&\varepsilon_{yz}&\omega_{yz}\\ \hline
 P_i,P_h&1&(m+2)/4\\
 Q_i,Q_h&1&(m+2)/4\\
 P_i,Q_h&0&0\\
 P_i,W_{ij}&1&(m-1)/4\\
 P_i,W_{hj},\ i\ne h&0&1/4\\
 Q_i,W_{ij}&1&(m-1)/4\\
 Q_i,W_{hj},\ i\ne h&0&1/4\\
 W_{ij},W_{i\ell}&1&(m-1)/4\\
 W_{ij},W_{hj}&1&(m+1)/4\\
 W_{ij},W_{h\ell},\ i\ne h,\ j\ne\ell&0&1/4
\end{array}
\end{equation}
Moreover,
\begin{align*}
 \deg(x)&=\deg(W_{ij})=m^2+2m,
 &\deg(P_i)&=\deg(Q_i)=m^2+2m-1,\\
 t_{P_i}&=t_{Q_i}=2m-1,
 &t_{W_{ij}}&=2m,
\end{align*}
and
\begin{equation*}
 \Omega_{P_i}=\Omega_{Q_i}=\frac{(m-1)(3m+2)}4,
 \qquad
 \Omega_{W_{ij}}=\frac{3(m^2-1)}4.
\end{equation*}
Thus the curvature matrix $A_\infty(x)$ is
\begin{equation}\label{eq:star-twisted-middle-curvature-table}
\begin{array}{c|c}
 y,z&A_\infty(x)_{yz}\\ \hline
 P_i,P_i&\dfrac{m^2+5m}{2}\\
 P_i,P_h,\ h\ne i&-2-\dfrac m2\\
 Q_i,Q_i&\dfrac{m^2+5m}{2}\\
 Q_i,Q_h,\ h\ne i&-2-\dfrac m2\\
 P_i,Q_h&1\\
 P_i,W_{ij}&-\dfrac{m+1}{2}\\
 P_i,W_{hj},\ i\ne h&\dfrac12\\
 Q_i,W_{ij}&-\dfrac{m+1}{2}\\
 Q_i,W_{hj},\ i\ne h&\dfrac12\\
 W_{ij},W_{ij}&\dfrac{m^2+6m+3}{2}\\
 W_{ij},W_{i\ell},\ j\ne\ell&-\dfrac{m+1}{2}\\
 W_{ij},W_{hj},\ i\ne h&-\dfrac{m+3}{2}\\
 W_{ij},W_{h\ell},\ i\ne h,\ j\ne\ell&\dfrac12
\end{array}
\end{equation}

Define
\begin{equation*}
 p_0=\frac1{\sqrt m}\sum_{i\in A}\mathbf e_{P_i},\qquad
 q_0=\frac1{\sqrt m}\sum_{i\in A}\mathbf e_{Q_i},\qquad
 w_0=\frac1m\sum_{\substack{i\in A\\j\in B}}\mathbf e_{W_{ij}}.
\end{equation*}
Direct multiplication using \eqref{eq:star-twisted-middle-curvature-table}
shows that $\operatorname{span}\{p_0,q_0,w_0\}$ is invariant under
$A_\infty(x)$, and the restriction of $A_\infty(x)$ to it, with respect to the
ordered orthonormal basis $(p_0,q_0,w_0)$, is
\begin{equation}\label{eq:star-twisted-middle-constant-block}
 T_m=\begin{pmatrix}
  m+2&m&-\sqrt m\\
  m&m+2&-\sqrt m\\
  -\sqrt m&-\sqrt m&m+4
 \end{pmatrix},
 \qquad
 \operatorname{Spec}(T_m)=\{2,m+2,2m+4\}.
\end{equation}

For any unit vector $a=(a_i)_{i\in A}$ with $\sum_{i\in A}a_i=0$,
define
\begin{equation*}
 p(a)=\sum_{i\in A}a_i\mathbf e_{P_i},\qquad
 q(a)=\sum_{i\in A}a_i\mathbf e_{Q_i},\qquad
 w_A(a)=\frac1{\sqrt m}
 \sum_{\substack{i\in A\\j\in B}}a_i\mathbf e_{W_{ij}}.
\end{equation*}
These three vectors are orthonormal, their span is invariant under
$A_\infty(x)$, and the restriction in the ordered basis
$(p(a),q(a),w_A(a))$ is
\begin{equation}\label{eq:star-twisted-middle-first-index-block}
 S_m=\begin{pmatrix}
  \alpha_m&0&\beta_m\\
  0&\alpha_m&\beta_m\\
  \beta_m&\beta_m&3m+4
 \end{pmatrix},
\end{equation}
where
\begin{equation*}
 \alpha_m=\frac{m^2+6m+4}{2},
 \qquad
 \beta_m=-\frac{m+2}{2}\sqrt m.
\end{equation*}
A direct diagonalization yields
\begin{equation*}
 \operatorname{Spec}(S_m)
 =\left\{2m+2,\frac{m^2+6m+4}{2},
                 \frac{m^2+8m+8}{2}\right\}.
\end{equation*}

Next, for any unit vector $b=(b_j)_{j\in B}$ with
$\sum_{j\in B}b_j=0$, put
\begin{equation*}
 w_B(b)=\frac1{\sqrt m}
 \sum_{\substack{i\in A\\j\in B}}b_j\mathbf e_{W_{ij}}.
\end{equation*}
Then
\begin{equation*}
 A_\infty(x)w_B(b)=(2m+4)w_B(b).
\end{equation*}
Finally, let $Z=(z_{ij})_{i\in A,\,j\in B}$ satisfy
\begin{equation*}
 \sum_{j\in B}z_{ij}=0\quad(i\in A),
 \qquad
 \sum_{i\in A}z_{ij}=0\quad(j\in B).
\end{equation*}
Then
\begin{equation*}
 w(Z)=\sum_{\substack{i\in A\\j\in B}}z_{ij}\mathbf e_{W_{ij}}
 \qquad\text{satisfies}\qquad
 A_\infty(x)w(Z)=\frac{m^2+8m+8}{2}w(Z).
\end{equation*}
Let $\{a^{(\mu)}\}_{\mu=1}^{m-1}$ and
$\{b^{(\nu)}\}_{\nu=1}^{m-1}$ be orthonormal bases of the zero-sum
subspaces
\begin{equation*}
 \mathbb R_0^A
 :=\left\{a=(a_i)_{i\in A}\in\mathbb R^A:
           \sum_{i\in A}a_i=0\right\},
 \qquad
 \mathbb R_0^B
 :=\left\{b=(b_j)_{j\in B}\in\mathbb R^B:
           \sum_{j\in B}b_j=0\right\},
\end{equation*}
respectively. Then $\mathbb{R}^{S_1(x)}$ has the following orthogonal decomposition
\begin{equation*}
  \begin{aligned}
    \mathbb{R}^{S_1(x)}=&\Span\{p_0,q_0,w_0\}\bigoplus_{\mu=1}^{m-1}\Span\{p(a^{(\mu)}),q(a^{(\mu)}),w_A(a^{(\mu)})\}\bigoplus_{\nu=1}^{m-1}\Span\{w_B(b^{(\nu)})\}\\
    &\bigoplus\left\{w((z_{ij})_{i\in A,j\in B}):\ \sum_{j\in B}z_{ij}=0,\ \sum_{i\in A}z_{ij}=0\right\}.
  \end{aligned}
\end{equation*}
 All the displayed eigenvalues
are at least $2$, and \eqref{eq:star-twisted-middle-constant-block} contains
the eigenvalue $2$.
Together with reflection symmetry and
\eqref{eq:star-twisted-endpoint-curvature}, we obtain
\begin{equation}\label{eq:star-twisted-CD}
 \mathcal K_\infty^{G_D^\star}(x)=2
 \qquad\text{for every }x\in V(G_D^\star).
\end{equation}
In particular, $G_D^\star$ satisfies $\CD(2,\infty)$.

We next identify its spectral gap and visible graph.  Every vertex in $L_k$
has $k$ neighbours in $L_{k-1}$ and $D-k$ neighbours in $L_{k+1}$, while all
additional edges remain inside $L_k$.  Hence the function
\begin{equation}\label{eq:star-twisted-rank-function}
 f(k,A)=m-k
\end{equation}
satisfies
\begin{equation*}
 L_{G_D^\star}f(k,A)=k-(D-k)=-2f(k,A).
\end{equation*}
The Lichnerowicz bound and \eqref{eq:star-twisted-CD} now give
$\lambda_1(G_D^\star)=2$.

For distinct vertices of the same layer in $H_D^\star$, the numbers of common
parents and common children are equal and belong to $\{0,1\}$.  Then \Cref{thm:hypercube-cospectrality} gives
\begin{equation}\label{eq:star-twisted-H-gap}
 \lambda_1(H_D^\star)=2.
\end{equation}
Let $K_{\mathrm{hor}}$ be the graph whose vertex set is $V(H_D^\star)$ and whose edge set is the union of additional edges added in
\eqref{eq:star-twisted-horizontal-edges}.  For $g\in E_2(G_D^\star)$,
we have $g\perp\one$, and hence
\begin{align*}
 2\|g\|^2
 &=\langle g,-L_{G_D^\star}g\rangle\\
 &=\langle g,-L_{H_D^\star}g\rangle
   +\langle g,-L_{K_{\mathrm{hor}}}g\rangle
 \geq2\|g\|^2.
\end{align*}
Equality forces
\begin{equation*}
 \langle g,-L_{K_{\mathrm{hor}}}g\rangle
 =\sum_{uv\in E(K_{\mathrm{hor}})}(g(u)-g(v))^2=0.
\end{equation*}
Each subgraph $K_{\mathrm{hor}}[L_k]$ is a connected graph, so $g$ is constant
on every $L_k$; write this value as $c_k$.  Since
$L_{G_D^\star}g=L_{H_D^\star}g=-2g$, the constants satisfy
\begin{equation*}
 (D-2)c_k=kc_{k-1}+(D-k)c_{k+1}.
\end{equation*}
The equation $-2c_0=D(c_1-c_0)$ gives $c_1=\frac{D-2}{D}c_0$, and this recurrence gives
\begin{equation*}
 c_k=\frac{D-2k}{D}c_0.
\end{equation*}
Therefore $E_2(G_D^\star)=\operatorname{span}\{f\}$. All added edges in \eqref{eq:star-twisted-horizontal-edges} are collapsed, whereas $f$ changes by one on every edge
of $H_D^\star$.  This proves $(G_D^\star)_{\vis}=H_D^\star$. 

Finally, suppose $m\geq2$ and fix
$R\in\binom{[D]}{m-1}$.  The two vertices $(m-1,R)$ and $(m+1,R)$ have the
$m+1$ common neighbours
\begin{equation*}
 \{(m,R\cup\{j\}):j\in[D]\setminus R\}
\end{equation*}
in $H_D^\star$.  Two distinct vertices of a hypercube have either zero or two
common neighbours.  Since $m+1\geq3$, it follows that
$H_D^\star\not\cong H_D$.  When $D=2$, the construction still satisfies all
the preceding curvature and spectral conclusions, but
$H_2^\star\cong H_2$.
\end{proof}

\begin{theorem}
\label{thm:non-hypercube-visible-every-dimension}
For every integer $d\geq4$, there exists a connected irregular graph $G$
satisfying $\CD(2,\infty)$ and $\lambda_1(G)=2$ such that $G_{\vis}$ is
$d$-regular and
\begin{equation*}
 G_{\vis}\not\cong H_d.
\end{equation*}
\end{theorem}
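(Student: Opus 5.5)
For even \(d\geq4\) the statement follows directly from \Cref{prop:even-dimensional-star-twist}. We take \(G=G_d^\star\). It is connected and satisfies \(\CD(2,\infty)\) with \(\lambda_1=2\). Its visible graph is \(H_d^\star\), which is \(d\)-regular by \eqref{eq:star-twisted-degrees} and is not isomorphic to \(H_d\). Irregularity also comes from \eqref{eq:star-twisted-degrees}: the degree is \(d\) on \(L_0\) but \(d+(d-1)\) on \(L_1\).

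For odd \(d=2m+1\geq5\) the plan is to take a Cartesian product with \(K_2\). Set \(G=G_{2m}^\star\square K_2\). The standard facts are these. The Bakry--\'Emery curvature of a Cartesian product is the minimum of the factor curvatures, and \(K_2\) has curvature \(2\), so \(G\) satisfies \(\CD(2,\infty)\). The Laplacian spectrum of a product is the sumset of the factor spectra, so \(\lambda_1(G)=\min\{\lambda_1(G_{2m}^\star),2\}=2\). Moreover
\[
E_2(G)=\bigl(E_2(G_{2m}^\star)\otimes\one\bigr)\oplus\bigl(\one\otimes E_2(K_2)\bigr),
\]
because the only way to write \(2\) as a sum of two eigenvalues is \(2+0\) or \(0+2\). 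The first summand is spanned by \(f\otimes\one\), whose visible edges are those of \(H_{2m}^\star\) inside each copy. The second summand is spanned by \(\one\otimes(1,-1)\), which makes every \(K_2\)-edge visible and every horizontal edge collapsed. Hence \(G_{\vis}=H_{2m}^\star\square K_2\). This graph is \((2m+1)\)-regular, and \(G\) is irregular because \(G_{2m}^\star\) is.

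It remains to show \(H_{2m}^\star\square K_2\not\cong H_{2m+1}\). Fix \(R\in\binom{[2m]}{m-1}\). In \(H_{2m}^\star\), the vertices \((m-1,R)\) and \((m+1,R)\) have \(m+1\geq3\) common neighbours, and these persist in the same copy of the product. In a hypercube any two vertices have zero or two common neighbours, so this is a contradiction. Alternatively, cancellation for Cartesian products would give \(H_{2m}^\star\cong H_{2m}\), which is already excluded.

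The main obstacle is justifying the product curvature statement, since the paper has not quoted it. If needed, I would prove it directly from the curvature matrix in \Cref{def:curMatrix}. At \((x,\epsilon)\), take \(v\) a \(G_{2m}^\star\)-neighbour and \(e\) the \(K_2\)-neighbour. Then \(v\) and \(e\) are nonadjacent and have exactly two common neighbours, \((x,\epsilon)\) and one vertex in \(S_2\) with \(d^-=2\). This gives \(\omega=\tfrac12\), so the off-diagonal entry is \(0\). The degree shift of \(+1\) at both endpoints in the diagonal formula is exactly cancelled by the extra \(2\cdot\tfrac12\) in \(\Omega\). So the curvature matrix is block diagonal, with blocks \(A_\infty^{G_{2m}^\star}(x)\) and \([2]\), and its smallest eigenvalue is \(2\).
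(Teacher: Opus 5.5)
Your proposal is correct and follows the paper's proof essentially step for step. The ingredients match: $G_d^\star$ for even $d$; $G_{d-1}^\star\square H_1$ for odd $d$; the splitting $2=2+0=0+2$ of the first eigenvalue, which identifies the visible graph as $H_{d-1}^\star\square H_1$; and the $m+1\geq 3$ common neighbours of $(m-1,R)$ and $(m+1,R)$, which rule out a hypercube.

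The one difference is how $\CD(2,\infty)$ is justified for the product. The paper expands $\Gamma_2$ on the Cartesian product and drops the nonnegative mixed term. You instead show the curvature matrix is block diagonal, with blocks $A_\infty^{G_{2m}^\star}(x)$ and $[2]$: the cross entries are $1-2\cdot\tfrac12=0$, and the degree shift is cancelled by the extra $\tfrac12$ in $\Omega$. Both arguments are valid.
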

\begin{figure}[htbp]
  \centering
  \label{fig:star-twist-cartesian-H1}

\begin{tikzpicture}[
  x=0.50cm,
  y=0.70cm,
  graph edge/.style={draw=black,line width=0.5pt},
  matching edge/.style={draw=black,line width=0.5pt},
  graph vertex/.style={circle,draw=black,fill=white,line width=0.55pt,
    minimum size=4.8pt,inner sep=0pt}
]

\coordinate (zero-p) at (0,0);

\coordinate (zero-a1) at (-4,-1.4);
\coordinate (zero-a2) at (-2,-1.4);
\coordinate (one-p)   at ( 0,-1.4);
\coordinate (zero-a3) at ( 2,-1.4);
\coordinate (zero-a4) at ( 4,-1.4);

\coordinate (zero-e12) at (-4.5,-2.8);
\coordinate (one-a1)   at (-3.5,-2.8);
\coordinate (zero-e13) at (-2.5,-2.8);
\coordinate (one-a2)   at (-1.5,-2.8);
\coordinate (zero-e14) at (-0.5,-2.8);
\coordinate (zero-e23) at ( 0.5,-2.8);
\coordinate (one-a3)   at ( 1.5,-2.8);
\coordinate (zero-e24) at ( 2.5,-2.8);
\coordinate (one-a4)   at ( 3.5,-2.8);
\coordinate (zero-e34) at ( 4.5,-2.8);

\coordinate (one-e12) at (-4.5,-4.2);
\coordinate (zero-b1) at (-3.5,-4.2);
\coordinate (one-e13) at (-2.5,-4.2);
\coordinate (zero-b2) at (-1.5,-4.2);
\coordinate (one-e14) at (-0.5,-4.2);
\coordinate (one-e23) at ( 0.5,-4.2);
\coordinate (zero-b3) at ( 1.5,-4.2);
\coordinate (one-e24) at ( 2.5,-4.2);
\coordinate (zero-b4) at ( 3.5,-4.2);
\coordinate (one-e34) at ( 4.5,-4.2);

\coordinate (one-b1) at (-4,-5.6);
\coordinate (one-b2) at (-2,-5.6);
\coordinate (zero-q) at ( 0,-5.6);
\coordinate (one-b3) at ( 2,-5.6);
\coordinate (one-b4) at ( 4,-5.6);

\coordinate (one-q) at (0,-7);

\foreach \v in {p,a1,a2,a3,a4,e12,e13,e14,e23,e24,e34,b1,b2,b3,b4,q}{
  \draw[matching edge] (zero-\v)--(one-\v);
}

\foreach \copy in {zero,one}{
  \foreach \i in {1,2,3,4}{
    \draw[graph edge] (\copy-p)--(\copy-a\i);
    \draw[graph edge] (\copy-b\i)--(\copy-q);
  }
  \foreach \e/\i/\j in {12/1/2,13/1/3,14/1/4,23/2/3,24/2/4,34/3/4}{
    \draw[graph edge] (\copy-a\i)--(\copy-e\e);
    \draw[graph edge] (\copy-a\j)--(\copy-e\e);
    \draw[graph edge] (\copy-e\e)--(\copy-b\i);
    \draw[graph edge] (\copy-e\e)--(\copy-b\j);
  }
}

\foreach \copy in {zero,one}{
  \foreach \v in {p,a1,a2,a3,a4,e12,e13,e14,e23,e24,e34,b1,b2,b3,b4,q}{
    \node[graph vertex] at (\copy-\v) {};
  }
}

\node at (0,-7.75) {$H_4^\star\square H_1$};

\begin{scope}[xshift=7.7cm]
  \coordinate (h-empty) at (0,0);

  \foreach \i/\x in {1/-4,2/-2,3/0,4/2,5/4}{
    \coordinate (h-s\i) at (\x,-1.4);
  }

  \foreach \e/\x in {12/-4.5,13/-3.5,14/-2.5,15/-1.5,23/-0.5,
                      24/0.5,25/1.5,34/2.5,35/3.5,45/4.5}{
    \coordinate (h-e\e) at (\x,-2.8);
  }

  \foreach \t/\x in {123/-4.5,124/-3.5,125/-2.5,134/-1.5,135/-0.5,
                      145/0.5,234/1.5,235/2.5,245/3.5,345/4.5}{
    \coordinate (h-t\t) at (\x,-4.2);
  }

  \foreach \q/\x in {1234/-4,1235/-2,1245/0,1345/2,2345/4}{
    \coordinate (h-q\q) at (\x,-5.6);
  }

  \coordinate (h-full) at (0,-7);

  \foreach \i in {1,2,3,4,5}{
    \draw[graph edge] (h-empty)--(h-s\i);
  }

  \foreach \e/\i/\j in {12/1/2,13/1/3,14/1/4,15/1/5,23/2/3,
                         24/2/4,25/2/5,34/3/4,35/3/5,45/4/5}{
    \draw[graph edge] (h-s\i)--(h-e\e);
    \draw[graph edge] (h-s\j)--(h-e\e);
  }

  \foreach \t/\a/\b/\c in {
    123/12/13/23,124/12/14/24,125/12/15/25,
    134/13/14/34,135/13/15/35,145/14/15/45,
    234/23/24/34,235/23/25/35,245/24/25/45,345/34/35/45}{
    \draw[graph edge] (h-e\a)--(h-t\t);
    \draw[graph edge] (h-e\b)--(h-t\t);
    \draw[graph edge] (h-e\c)--(h-t\t);
  }

  \foreach \q/\a/\b/\c/\d in {
    1234/123/124/134/234,1235/123/125/135/235,
    1245/124/125/145/245,1345/134/135/145/345,
    2345/234/235/245/345}{
    \draw[graph edge] (h-t\a)--(h-q\q);
    \draw[graph edge] (h-t\b)--(h-q\q);
    \draw[graph edge] (h-t\c)--(h-q\q);
    \draw[graph edge] (h-t\d)--(h-q\q);
  }

  \foreach \q in {1234,1235,1245,1345,2345}{
    \draw[graph edge] (h-q\q)--(h-full);
  }

  \node[graph vertex] at (h-empty) {};
  \node[graph vertex] at (h-full) {};
  \foreach \i in {1,2,3,4,5}{
    \node[graph vertex] at (h-s\i) {};
  }
  \foreach \e in {12,13,14,15,23,24,25,34,35,45}{
    \node[graph vertex] at (h-e\e) {};
  }
  \foreach \t in {123,124,125,134,135,145,234,235,245,345}{
    \node[graph vertex] at (h-t\t) {};
  }
  \foreach \q in {1234,1235,1245,1345,2345}{
    \node[graph vertex] at (h-q\q) {};
  }

  \node at (0,-7.75) {$5$-dimensional hypercube $H_5
  $};
\end{scope}
\end{tikzpicture}
\end{figure}
\begin{proof}
If $d$ is even, take $G=G_d^\star$.  All the assertions follow from
\Cref{prop:even-dimensional-star-twist}; the graph is irregular by
\eqref{eq:star-twisted-degrees}.

Now suppose that $d$ is odd.  Write $d=D+1$, where $D=2m\geq4$, and set
\begin{equation*}
 \widetilde G=G_D^\star\square H_1.
\end{equation*}
We first verify the curvature condition.  For a function $u$ on the Cartesian
product, direct expansion of the $\Gamma_2^{\widetilde G}$ gives
\begin{align*}
 \Gamma_2^{\widetilde G}(u)(x,\varepsilon)
 ={}&\Gamma_2^{G_D^\star}\bigl(u(\,\cdot\,,\varepsilon)\bigr)(x)
   +\Gamma_2^{H_1}\bigl(u(x,\,\cdot\,)\bigr)(\varepsilon)\\
 &+\frac12\sum_{y\sim x}\sum_{\eta\sim\varepsilon}
 \bigl(u(y,\eta)-u(y,\varepsilon)-u(x,\eta)
       +u(x,\varepsilon)\bigr)^2.
\end{align*}
The last term is nonnegative.  Since both $G_D^\star$ and $H_1$ satisfy
$\CD(2,\infty)$, while the corresponding first-order forms add on a
Cartesian product, it follows that $\widetilde G$ satisfies
$\CD(2,\infty)$.

The Laplacian on the product is
\begin{equation*}
 -L_{\widetilde G}
 =(-L_{G_D^\star})\otimes I+I\otimes(-L_{H_1}).
\end{equation*}
By \Cref{prop:even-dimensional-star-twist},
$\lambda_1(G_D^\star)=2$ and $E_2(G_D^\star)=\Span\{f\}$, where
$f(k,A)=m-k$.  If $h$ is the function on $H_1$ given by
$h(0)=1$ and $h(1)=-1$, then $E_2(H_1)=\Span\{h\}$.  Hence
\begin{equation}\label{eq:odd-star-product-two-eigenspace}
 \lambda_1(\widetilde G)=2,
 \qquad
 E_2(\widetilde G)
 =\Span\bigl\{(x,\varepsilon)\mapsto f(x),
               (x,\varepsilon)\mapsto h(\varepsilon)\bigr\}.
\end{equation}
Thus $\widetilde G$ is Lichnerowicz-sharp.  Moreover,
\begin{equation*}
 \deg_{\widetilde G}((k,A),\varepsilon)
 =D+k(D-k)+1,
\end{equation*}
so $\widetilde G$ is irregular.

Equation \eqref{eq:odd-star-product-two-eigenspace} also determines every
visible edge.  An edge in the $G_D^\star$-direction is visible precisely when
its projection is an edge of $H_D^\star$, and every edge in the $H_1$-direction
is visible.  Therefore
\begin{equation*}
 (\widetilde G)_{\vis}=H_D^\star\square H_1,
\end{equation*}
which is $(D+1)$-regular.

It remains to show that this visible graph is not a hypercube.  Choose
$R\in\binom{[D]}{m-1}$ and put
\begin{equation*}
 u=(m-1,R),\qquad v=(m+1,R).
\end{equation*}
The vertices $(u,0)$ and $(v,0)$ in $H_D^\star\square H_1$ have exactly the
$m+1$ common neighbours
\begin{equation*}
 \bigl\{((m,R\cup\{j\}),0):j\in[D]\setminus R\bigr\}.
\end{equation*}
Since $m+1\geq3$, whereas two distinct vertices of a hypercube have either
zero or two common neighbours, we obtain
$H_D^\star\square H_1\not\cong H_{D+1}=H_d$.
\end{proof}
\begin{remark}
    Here $d\geq 4$ is necessary. Since the only candidates satisfying structure described in \Cref{thm:intrinsic-Gamma} are hypercubes when $d\le 3$.
\end{remark}

\section{Acknowledgments}
This work was supported by the Scientific Research Innovation Capability Support Project for Young Faculty (Grant No. SRICSPYF-ZY2025160) and the National Natural Science Foundation of China (Grant No. 12431004).

\bibliographystyle{plainurl}
\bibliography{rigid}

\end{document}